\documentclass[a4paper,12pt]{amsart}
\usepackage [latin1]{inputenc}
 \usepackage{amssymb,amsthm}
 \setlength{\textwidth}{14.5cm}
\setlength{\oddsidemargin}{1cm}
\setlength{\evensidemargin}{1cm}
\setlength{\textheight}{20cm}
\setlength{\parskip}{2mm}
\setlength{\parindent}{0em}
\setlength{\headsep}{1.5cm}
\usepackage{amscd} 
\usepackage{graphicx,color} 

\usepackage{url} 

\newtheorem{theorem}{Theorem}[section]
\newtheorem{corollary}[theorem]{Corollary}
\newtheorem{proposition}[theorem]{Proposition}

\theoremstyle{definition}
\newtheorem{definition}[theorem]{Definition}

\newtheorem{remark}[theorem]{Remark}

\def\r{\mathbb R}

\def\s{\mathbb S}

\begin{document}

\title[Axisymmetric stationary surfaces for the moment of inertia]{Axisymmetric stationary surfaces for the moment of inertia}
\author{Ulrich Dierkes}
\address{Fakult\"{a}t für Mathematik, Universit\"{a}t Duisburg-Essen, Thea-Leymann-Str. 9, 45147 Essen, Germany}
\email{ulrich.dierkes@uni-due.de}
\author{Rafael L\'opez}
\address{ Departamento de Geometr\'{\i}a y Topolog\'{\i}a\\ Universidad de Granada. 18071 Granada, Spain}
\email{rcamino@ugr.es}

 \keywords{ruled surface; moment of inertial; stationary surface.}
 \subjclass{53A10 ; Secondary: 53C42.}

\begin{abstract}  We investigate axisymmetric  surfaces  in Euclidean space that are stationary for the energy $E_\alpha=\int_\Sigma |p|^\alpha\, d\Sigma$. By using a phase plane analysis, we classify these surfaces when  they intersect orthogonally the rotation axis.  We also give some applications of the maximum principle characterizing the closed stationary surfaces and the compact stationary surfaces with boundary a circle when $\alpha=-2$. Finally, we prove that     helicoidal stationary surfaces must be rotational surfaces.  

\end{abstract}

  \keywords{Euler's problem, axisymmetric solutions, tangency principle, phase plane analysis, helicoidal surfaces. }
\subjclass{53A10, 49Q05, 35A15}

\maketitle
\section{Introduction and statement of the results}

Euler investigated planar curves of constant density   with  the  least  powers of the moment of inertia with respect to the origin $0$ \cite{eu}. In polar coordinates $r=r(s)$, these curves $\gamma$ are minimum of the energy $\int_\gamma r^\alpha\, ds$ where $\alpha\in\r$.   Explicit parametrizations of the extremals can be found  in \cite{ca,to}. When  $\alpha=2$, the energy represents the moment of inertia with respect to $0$ and the extremals are given by $ar^3=\mbox{sec}(3\theta+b)$, $a,b\in\r$.   For this value of $\alpha=2$,  Mason found the minimizers joining two given points of the plane \cite{ma}. 

 Recently, the first author and G. Huisken have generalized the Euler's problem in arbitrary dimensions  \cite{dh}. For surfaces in Euclidean $3$-space $\r^3$, let $\Sigma$ be a  connected, oriented surface   and consider a smooth immersion   $\Phi\colon \Sigma\to\r^3$ in   $\r^3$.  If $\alpha\in\r$, define the energy   
\begin{equation*}
E_\alpha[\Sigma]=\int_\Sigma |p|^\alpha\, d\Sigma,
\end{equation*}
where $d\Sigma$ denotes the area element induced on $\Sigma$ by the Euclidean metric $\langle,\rangle$ and  we are identifying $p\in\Sigma$ with its image $\Phi(p)$ by the immersion.   In order to use the techniques of calculus of variations, we need to assume that   the origin $0$ of $\r^3$ does  not belong to $\Sigma$. The first variation of $E_\alpha$ under compactly supported variations of $\Sigma$ is 
\begin{equation}\label{e0}
E_\alpha'(0)=\int_\Sigma \left( \alpha\frac{\langle\nu(p),p\rangle}{|p|^2}-H(p)\right)|p|^\alpha\xi(p)\, d\Sigma,
\end{equation}
where $H$ and $\nu$ are the mean curvature and the unit normal vector of $\Sigma$, respectively, and $\xi$ is the normal component of the variational vector field of the variation. The convention for the mean curvature is that $H$ is the sum of the principal curvatures. Thus a sphere of radius $r>0$ has $H=2/r$ with respect to the inward orientation. From \eqref{e0}, we characterize the critical points of $E_\alpha$ giving the following definition.

\begin{definition} A surface $\Sigma$ of $\r^3-\{0\}$ is said to be a {\it stationary surface} of $E_\alpha$ if 
\begin{equation}\label{eq1}
H(p)=\alpha\frac{\langle \nu(p),p\rangle}{|p|^2},\quad p\in\Sigma.
\end{equation}
\end{definition}
 If $\alpha=0$, the energy $E_0$ is     the area functional, where stationary surfaces are minimal surfaces.  We will discard the case $\alpha=0$.

In \cite{dh} the authors have studied the  stability of spheres and minimal cones as well as  minimizers of $E_\alpha$. See also \cite{d4} and generalizations to other energies in \cite{cx}.   Such as it was pointed in \cite{dh}, stationary surfaces of $E_\alpha$  can be viewed as  critical points of a weighted area defined in $\r^3$ where the density is $|p|^\alpha$. This implies that  stationary surfaces  \eqref{eq1}   are weighted minimal surfaces for this density. In particular, Eq.  \eqref{eq1} satisfies the maximum principle. 

The purpose  of this paper is the study and classification of  all axisymmetric  stationary surfaces of the energy $E_\alpha$. By the way, we revisit the results of \cite{dh} for closed stationary surfaces, we investigate  problems on compact stationary surfaces with boundary and finally we classify all helicoidal stationary surfaces.  

The paper is organized as follows.  In Sect.  \ref{s2} we give explicit examples of $\alpha$-stationary surfaces, namely, vector planes (for all $\alpha$) and spheres (only for $\alpha=-2,-4$). Using the tangency principle, we prove  that the only closed $\alpha$-stationary surfaces are spheres centered at the origin (Thm. \ref{t32}). This completes the initial classification given in \cite{dh}. In Sect. \ref{s23} we give more applications of the tangency principle for compact stationary surfaces with boundary. For $\alpha=-2$, we  characterize in Thm. \ref{t28} that spherical caps are the only compact $-2$-stationary surfaces whose boundary is a circle.

Sections \ref{s3}, \ref{s4} and \ref{s5}  constitute the main core of this paper. The purpose is the study of  axisymmetric stationary  surfaces obtaining a classification in case that the surface meets the rotation axis. First, we study   the relationship between the axis of rotation and the origin of coordinates.   Notice that in Eq. \eqref{eq1} the origin is a privileged point because the norm $|p|$ in the definition of the energy $E_\alpha$  is computed from $0$. We prove that except spheres containing the origin ($\alpha=-4$), the rotation axis must accross $0$ (Prop. \ref{pr2}).  In Sect. \ref{s4},  we prove existence of rotational stationary  surfaces intersecting the rotation axis. Writing Eq. \eqref{eq1} in radial coordinates, Eq.  \eqref{eq1} presents a singularity when the surface meets orthogonally the rotation axis.  Using the Banach fixed point theorem, we prove existence of such solutions (Thm. \ref{t1}). We also give a result of removability of single points for Eq. \eqref{eq1} (Thm. \ref{t-remove}). 

Section \ref{s5} is devoted to get  a classification of axisymmetric stationary  surfaces giving a geometric description when these surfaces meet the rotation axis: see  Thms. \ref{t2}, \ref{t3} and \ref{t5}. An overview of this classification is the following. 
\begin{enumerate}
\item If $\alpha>0$, then the surface is an entire graph.
\item If $\alpha\in (-2,0)$, then the surface is a graph on a plane outside a compact set around $0$. 
\item If $\alpha=-2$, then the surface is a sphere centered at $0$. 
\item If $\alpha<-2$, then the surface together the origin is a  closed surface non-smooth at $0$ except when $\alpha=-4$, where the surface is a sphere.  If $\alpha<4$, this surface is embedded.
\end{enumerate}

Going one step further, in Sect. \ref{s6} we ask which  are the stationary  surfaces of helicoidal type. We will prove that these surfaces are trivial in the sense that the pitch that determines the helicoidal motions must be zero, hence the surfaces simply   are surfaces of revolution (Thm. \ref{t61}). 
  
\section{Examples of stationary surfaces and the tangency principle}\label{s2}

In the first part of this section, we see  the type of transformations of Euclidean space $\r^3$ that preserve stationary surfaces and then we show examples of stationary surfaces. 

 In general, rigid motions  of $\r^3$ do not preserve solutions of Eq. \eqref{eq1} because of the term $|p|^2$. This occurs, for example, with translations of $\r^3$ or symmetries about a plane which does not contain $0$.  The following result says that stationary surfaces are preserved by vector isometries and  by dilations from the origin.  

\begin{proposition}\label{pr0} 
\begin{enumerate}
\item Let $A\colon\r^3\to\r^3$ be a  vector isometry. If $\Sigma$ satisfies \eqref{eq1}, then $A(\Sigma)$ satisfies \eqref{eq1} for the same constant $\alpha$.  
\item Let $h\colon\r^3\to\r^3$ be a dilation from $0\in\r^3$.  If $\Sigma$ satisfies \eqref{eq1}, then $h(\Sigma)$ satisfies \eqref{eq1} for the same constant $\alpha$.
\end{enumerate}
\end{proposition}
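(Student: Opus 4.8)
The plan is to verify \eqref{eq1} directly for the transformed surface by tracking how each of the three ingredients of the equation---the mean curvature $H$, the unit normal $\nu$, and the position-dependent quantities $\langle\nu,p\rangle$ and $|p|^2$---behaves under $A$ and under $h$. The point is that \eqref{eq1} is covariant under both classes of maps: on the transformed surface both sides pick up the same factor, so the identity survives. I would treat the two items separately.

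For (1), let $A$ be a vector isometry (an orthogonal map of $\r^3$) and set $q=A(p)$. Since $A$ is a rigid motion fixing the origin, it carries $T_p\Sigma$ to $T_qA(\Sigma)$, the unit normal transforms as $\nu_{A(\Sigma)}(q)=A\bigl(\nu(p)\bigr)$ for the induced orientation, and the mean curvature is an isometric invariant, so $H_{A(\Sigma)}(q)=H(p)$. Because $A$ preserves the inner product and the norm, one has $\langle A(\nu(p)),A(p)\rangle=\langle\nu(p),p\rangle$ and $|A(p)|=|p|$. Substituting into the right-hand side of \eqref{eq1} evaluated at $q$ yields $\alpha\langle\nu(p),p\rangle/|p|^2=H(p)=H_{A(\Sigma)}(q)$, so $A(\Sigma)$ is stationary for the same $\alpha$.

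For (2), write $h(p)=\lambda p$ with $\lambda>0$ (a negative factor is a positive dilation composed with $-\mathrm{Id}$, already handled by (1)), and set $q=h(p)=\lambda p$. A dilation sends tangent planes to parallel tangent planes, so the unit normal is unchanged, $\nu_{h(\Sigma)}(q)=\nu(p)$, while the principal curvatures, having the dimension of inverse length, scale by $1/\lambda$, hence $H_{h(\Sigma)}(q)=H(p)/\lambda$. The crucial observation is that the right-hand side scales identically: $\langle\nu_{h(\Sigma)}(q),q\rangle=\langle\nu(p),\lambda p\rangle=\lambda\langle\nu(p),p\rangle$ whereas $|q|^2=\lambda^2|p|^2$, so $\alpha\langle\nu_{h(\Sigma)}(q),q\rangle/|q|^2=(1/\lambda)\,\alpha\langle\nu(p),p\rangle/|p|^2=(1/\lambda)H(p)=H_{h(\Sigma)}(q)$, and $h(\Sigma)$ is stationary for the same $\alpha$.

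The computation itself is routine; the only points requiring care are the orientation conventions that fix the sign of $\nu$ under $A$ and under $h$, and the scaling law $H\mapsto H/\lambda$ for the dilation---which is exactly what matches the factor $1/\lambda$ produced by the weight $1/|p|^2$. I expect no genuine obstacle here, since the statement amounts precisely to the covariance of \eqref{eq1} under the subgroup of conformal maps of $\r^3$ that fix the origin.
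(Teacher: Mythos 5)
Your proof is correct. The paper actually states Proposition \ref{pr0} without any proof (treating it as routine), and your direct verification---tracking $H$, $\nu$, $\langle\nu,p\rangle$ and $|p|^2$ under an orthogonal map and under a dilation, with $H\mapsto H/\lambda$ matching the $1/\lambda$ scaling of $\alpha\langle\nu,p\rangle/|p|^2$---is exactly the standard covariance check the paper leaves implicit.
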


We give explicit examples of stationary surfaces, focusing on isoparametric surfaces, that is, planes, spheres and cylinders. These surfaces are the only ones with constant Gauss curvature and constant mean curvature. Among these surfaces, we find those ones that are stationary surfaces of the energy $E_\alpha$. Since a plane of $\r^3$ has zero mean curvature, a plane $\Sigma$ is stationary for \eqref{eq1} if and only if $\langle\nu(p),p\rangle=0$ for all  $p\in\Sigma$. This implies that $0$ is a point of the plane hence, the plane is a vector plane. 

\begin{proposition}\label{pr00} 
A plane of $\r^3$ is an $\alpha$-stationary surface if and only if it is a vector plane. This occurs for all $\alpha\in\r$.
\end{proposition}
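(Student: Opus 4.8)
The plan is to exploit the fact that a plane is totally geodesic, so that its mean curvature vanishes identically, $H\equiv 0$. Substituting this into the stationarity equation \eqref{eq1}, I reduce it to the pointwise condition $\alpha\langle\nu(p),p\rangle/|p|^2=0$. Since the case $\alpha=0$ has been discarded by the standing convention, and since $0\notin\Sigma$ forces $|p|>0$ on $\Sigma$ (so that division by $|p|^2$ is legitimate), this is equivalent to requiring $\langle\nu(p),p\rangle=0$ for every $p\in\Sigma$.

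Next I would use that a plane has a constant unit normal $\nu\equiv\nu_0$, so that the plane can be written as $\{p\in\r^3:\langle\nu_0,p\rangle=c\}$ for some constant $c\in\r$, whose modulus is the distance from the plane to the origin. The condition $\langle\nu_0,p\rangle=0$ for all $p$ on the plane then says precisely that $c=0$, that is, the plane passes through the origin. This is exactly the statement that the plane is a vector plane, which gives the forward implication.

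For the converse, I would observe that if the plane is a vector plane, then every $p$ lying in it is orthogonal to the constant normal $\nu_0$, so $\langle\nu(p),p\rangle=0$ identically; combined with $H\equiv 0$, both sides of \eqref{eq1} vanish and the equation holds regardless of the value of $\alpha$. This establishes the clause that the phenomenon occurs for every $\alpha\in\r$.

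There is essentially no genuine obstacle here: the verification is immediate once the vanishing of $H$ on a plane is invoked, and indeed the argument is already sketched in the paragraph preceding the statement. The only points demanding a little care are bookkeeping ones, namely excluding $\alpha=0$ (handled by the convention) and ensuring $|p|\neq 0$ on $\Sigma$ (handled by $0\notin\Sigma$), both of which are routine.
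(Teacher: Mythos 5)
Your proof is correct and follows essentially the same route as the paper, which disposes of the proposition in the paragraph preceding its statement: a plane has $H\equiv 0$, so \eqref{eq1} reduces to $\langle\nu(p),p\rangle=0$, which for a constant normal forces the plane through the origin, and conversely holds trivially for any vector plane and any $\alpha$. Your additional bookkeeping (excluding $\alpha=0$, noting $0\notin\Sigma$ so the vector plane is understood to be punctured at the origin) matches the paper's own remarks following the proposition.
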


We now study which  spheres are stationary surfaces. 

\begin{proposition} \label{pr1}
The only stationary spheres  are the following:
\begin{enumerate}
\item  any sphere centered at $0$ ($\alpha=-2$); 
\item  any sphere containing $0$ ($\alpha=-4$).  
\end{enumerate}
\end{proposition}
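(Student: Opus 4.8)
The plan is to write an arbitrary sphere as $\Sigma=\{p\in\r^3:|p-c|=r\}$, with center $c\in\r^3$ and radius $r>0$, and to substitute its geometric data directly into the stationary equation \eqref{eq1}. Since \eqref{eq1} is invariant under reversal of the orientation (flipping $\nu$ changes the sign of both $H$ and $\langle\nu,p\rangle$ simultaneously), I may fix the inward orientation, for which $H=2/r$ is constant and the unit normal at a point $p$ is $\nu(p)=(c-p)/r$.

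First I would compute $\langle\nu(p),p\rangle=(\langle c,p\rangle-|p|^2)/r$ and insert this, together with $H=2/r$, into \eqref{eq1}. After cancelling the common factor $1/r$ and multiplying through by $|p|^2$, the stationary condition collapses to the pointwise identity
\[
(2+\alpha)\,|p|^2=\alpha\,\langle c,p\rangle,\qquad p\in\Sigma,
\]
which must hold at every point of the sphere.

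To exploit this, I would parametrize the sphere by $p=c+r\omega$ with $\omega$ ranging over the unit sphere in $\r^3$, so that $|p|^2=|c|^2+2r\langle c,\omega\rangle+r^2$ and $\langle c,p\rangle=|c|^2+r\langle c,\omega\rangle$ both become affine functions of the single quantity $t:=\langle c,\omega\rangle$. The key observation is that, as $\omega$ sweeps the unit sphere, $t$ varies over the whole interval $[-|c|,|c|]$; hence when $c\neq 0$ the identity above equates two genuinely non-constant affine functions of $t$, and I may match the coefficient of $t$ and the constant term separately. Equating linear coefficients gives $2r(2+\alpha)=\alpha r$, that is $\alpha=-4$; feeding this back into the constant term yields $2|c|^2+(2+\alpha)r^2=0$, hence $|c|=r$, which says precisely that the origin lies on $\Sigma$. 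This is case (2).

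The remaining possibility is $c=0$, a sphere centered at the origin, where $t\equiv 0$ and $|p|^2\equiv r^2\neq 0$; the identity then forces $2+\alpha=0$, i.e. $\alpha=-2$, which is case (1). Conversely, one verifies directly that spheres of either type satisfy \eqref{eq1} for the stated value of $\alpha$. I do not expect a genuine obstacle: once \eqref{eq1} is reduced to the affine identity the argument is elementary, and the only point requiring care is the case distinction $c=0$ versus $c\neq 0$, since the coefficient-matching step is legitimate only when $t=\langle c,\omega\rangle$ actually varies.
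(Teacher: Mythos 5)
Your proof is correct, but it takes a cleaner, coordinate-free route than the paper. The paper parametrizes the sphere explicitly by two angles $(s,t)$, substitutes into \eqref{eq1}, and matches trigonometric coefficients: first the $\cos t$, $\sin t$ coefficients force $(4+\alpha)q_1=(4+\alpha)q_2=0$, which leads to a case split on $\alpha=-4$ versus $\alpha\neq-4$, and in the latter case a further identity in $\sin s$ forces $q_3=0$ and $\alpha=-2$. You instead use the geometric data $H=2/r$, $\nu(p)=(c-p)/r$ directly to collapse \eqref{eq1} to the pointwise identity $(2+\alpha)|p|^2=\alpha\langle c,p\rangle$, and then observe that along $p=c+r\omega$ both sides are affine in the single scalar $t=\langle c,\omega\rangle$; your case split is on $c=0$ versus $c\neq 0$ (i.e., whether $t$ genuinely varies) rather than on the value of $\alpha$. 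Your reduction to a one-variable affine identity buys brevity and avoids all trigonometric bookkeeping; the paper's explicit-parametrization method is more mechanical but is the same template it reuses immediately afterwards for cylinders (Prop. \ref{pr3}) and for the rotational analysis in Sect. \ref{s3}, so it earns its keep there. One small point worth noting in your write-up: since the paper implicitly excludes the origin from the surface, in case (2) the identity holds on the punctured sphere only, but the set of attained values of $t$ is still infinite, so your coefficient matching remains valid.
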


\begin{proof}
Let $\Sigma$ be a sphere of radius $r>0$ centered at $q=(q_1,q_2,q_3)\in\r^3$. A parametrization of $\Sigma$ is 
$$\Phi(s,t)=(q_1+r\cos s\cos t,q_2+r\cos s\sin t,q_3+r\sin s),\quad s,t\in\r.$$
Since $H=2/r$, a substitution in \eqref{eq1} yields a polynomial of type
$$A_0(s)+\left((4+\alpha)rq_1\cos s\right)\cos t+\left((4+\alpha)rq_2\cos s\right)\sin t=0.$$
for some function $A_0$. Consequently,   
\begin{equation}\label{q1q2}
(4+\alpha)q_1 \cos s=(4+\alpha)q_2 \cos s=0
\end{equation}
 for all $s\in\r$. We have the following discussion of cases.
\begin{enumerate}
\item Case $\alpha=-4$. Now Eq. \eqref{eq1} is simply $|q|^2-r^2=0$. This is equivalent to say that the sphere contains the origin. 
\item Case $\alpha\not=-4$. Then \eqref{q1q2} implies $q_1=q_2=0$. Now Eq. \eqref{eq1} is 
$$2q_3^2+(2+\alpha)r^2+rq_3(4+\alpha)\sin s=0.$$
Since this holds for all $s\in\r$, we have $2q_3^2+(2+\alpha)r^2=0$ and $rq_3(4+\alpha)=0$. It follows that   $q_3=0$ and $\alpha=-2$.  Thus the center $q$ of the sphere  is the origin of $\r^3$.
\end{enumerate}
The converse is straightforward, that is, the spheres of items (1) and (2) are stationary surfaces. 
\end{proof}

Let us point out that   it is implicitly assumed that the origin is not included in the vector planes of Prop. \ref{pr00} neither in the spheres of Prop. \ref{pr1}. Finally we see that there are no stationary circular cylinders.

\begin{proposition}\label{pr3}
 No circular cylinders are stationary surfaces. 
\end{proposition}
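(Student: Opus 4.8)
The plan is to show that no circular cylinder can satisfy the stationary equation \eqref{eq1}. Since the cylinder equation cannot hold for any constant $\alpha$, I would argue by deriving a contradiction from the functional form of both sides of \eqref{eq1} along the cylinder. The key observation is that a circular cylinder has a \emph{constant} mean curvature $H$, so the left-hand side of \eqref{eq1} is constant, whereas the right-hand side $\alpha\langle\nu(p),p\rangle/|p|^2$ will vary as $p$ ranges over the surface. The whole proof reduces to exhibiting this incompatibility explicitly.

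First I would set up coordinates. Let the cylinder have radius $\rho>0$ and axis given by a line; after applying a vector isometry (which preserves solutions, by Prop. \ref{pr0}) I may assume the axis is parallel to a coordinate direction. Because the axis need not pass through $0$, I should take a general parametrization such as
\begin{equation*}
\Phi(s,t)=(a+\rho\cos t,\, b+\rho\sin t,\, s),\qquad s,t\in\r,
\end{equation*}
where $(a,b)$ is the fixed offset of the axis from the $z$-axis. The unit normal is $\nu=(\cos t,\sin t,0)$ (up to sign) and the mean curvature is the constant $H=1/\rho$. Then I would compute the two scalar quantities in \eqref{eq1}: the support-type function $\langle\nu,p\rangle=\rho+a\cos t+b\sin t$ and the squared norm $|p|^2=s^2+\rho^2+a^2+b^2+2\rho(a\cos t+b\sin t)$.

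The heart of the argument is that \eqref{eq1} must hold \emph{identically} in $s$ and $t$. Fixing $t$ and letting $s$ vary, the left-hand side $1/\rho$ is independent of $s$, while the right-hand side is $\alpha(\rho+a\cos t+b\sin t)/|p|^2$; since $|p|^2$ depends on $s$ through the term $s^2$, the right-hand side is non-constant in $s$ unless its numerator vanishes. Hence I would conclude $\rho+a\cos t+b\sin t=0$ for every $t$, which forces $\rho=0$ (taking, e.g., the average over $t$, or comparing constant and oscillatory parts), contradicting $\rho>0$. This yields the result for all $\alpha$ simultaneously.

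The only mild obstacle is the bookkeeping when the axis passes through the origin, i.e. $a=b=0$: then $\langle\nu,p\rangle=\rho>0$ and $|p|^2=s^2+\rho^2$, so \eqref{eq1} becomes $1/\rho=\alpha\rho/(s^2+\rho^2)$, whose right side genuinely depends on $s$ while the left does not, again giving a contradiction. Thus both the general and the degenerate configurations are handled by the same principle of matching an $s$-independent left-hand side against an $s$-dependent right-hand side, and I expect no serious difficulty beyond verifying these elementary identities.
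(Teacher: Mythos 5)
Your proposal is correct and is essentially the paper's own argument: both reduce to the fact that \eqref{eq1} must hold identically in the axial parameter $s$ while $|p|^2$ contains the term $s^2$, so the constant left-hand side $1/\rho$ cannot match the genuinely $s$-dependent right-hand side. The paper merely packages this by clearing denominators and noting that the resulting polynomial in $s$ has leading coefficient $1$, whereas you argue on the rational form directly; your pairing of the outward normal with $H=+1/\rho$ is a harmless sign slip, since \eqref{eq1} is invariant under reversing the orientation.
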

\begin{proof} Let $\Sigma$ be a circular cylinder of radius $r>0$ and suppose that $\Sigma$ also is  an $\alpha$-stationary surface. Using (1) of Prop. \ref{pr1}, and after a vector isometry of $\r^3$, we can assume that the rotation axis of the cylinder is parallel to the $z$-axis and that the axis is included in the $xz$-plane. Then a parametrization of $\Sigma$ is 
$$\Phi(s,t)=(q_1+r\cos t ,q_2+r\sin t ,s),\quad s,t\in\r.$$
We know that $H=1/r$ and substituting in  \eqref{eq1} we have
$$q_1^2+q_2^2+r(\alpha +2) \left(q_1  \cos (t)+  q_2 r \sin (t)\right)+(1+\alpha)  r^2 +s^2=0.$$
This is a polynomial equation on $s$, whose leader coefficient is $1$, which it is not possible.   
\end{proof}
 
 We focus on closed $\alpha$-stationary surfaces. In   \cite[Thm. 1.6]{dh} the authors proved  the following result: 
\begin{enumerate}
\item If   $\alpha>-2$, then there are no closed stationary surfaces.
\item If  $\alpha=-2$,  then the only stable closed stationary surfaces are spheres centered at the origin. 
\item Let $\alpha<-2$. If $\Sigma$ is a (closed or not closed)  stationary surface, then its closure $\overline{\Sigma}$  must contain the origin $0$ of $\r^3$.  
\end{enumerate}
For the proof of (1) and (3), the authors used the maximum principle for the Laplacian of the function $p\mapsto |p|$. If $\alpha=-2$, the statement was proved using the expression of the second variation of the energy $E_\alpha$. We now revisit these results using a suitable statement of the maximum principle and proving that the condition on stability on (2) can be dropped.

We view the class of stationary surfaces of the energy $E_\alpha$ in the context of the  theory of manifolds with density. In $\r^3$, consider a positive density $\phi\in C^\infty(\r^3)$ used to weight volume and surface area. The new weighted volume and  weighted area are given by $dV_\phi=\phi dV_0$ and $dA_\phi=\phi dA_0$, where $dV_0$ and $dA_0$ are the Euclidean volume and area of $\r^3$. Let us observe that this is not equivalent to scaling the metric conformally by the factor $\phi$ because the exponent of $\phi$ in $dV_\phi$ and $dA_\phi$ is the same. A surface $\Sigma$ is a critical point of $A_\phi$  for any volume-preserving variation of $\Sigma$ if and only if its  {\it weighted mean curvature} $H_\phi$ is constant, where  $H_\phi$ is   defined by 
\begin{equation}\label{mw}
H_\phi= H- \langle\nu,\overline{\nabla}\phi \rangle,
\end{equation}
where $\overline{\nabla}$ is the gradient in $\r^3$ (\cite{ba,mo}). By choosing $\phi(p)=|p|^\alpha$, then $H_\phi=0$ coincides with  Eq. \eqref{eq1}. In conclusion, stationary surfaces of the energy $E_\alpha$ are weighted minimal surfaces ($H_\phi=0$) for the density $|p|^\alpha$.    Standard theory of elliptic equations can be   applied Eq. \eqref{mw}: see the general reference \cite{gt}. In the following result, we use the maximum principle to compare two surfaces that are tangent at some point    \cite{ps}. Denote by $\partial\Sigma$ the boundary of $\Sigma$ and let $\mbox{int}(\Sigma)=\Sigma-\partial\Sigma$ be the set of interior points of $\Sigma$.

\begin{proposition} Let $\Sigma_1$ and $\Sigma_2$ be two oriented surfaces tangent at some interior  point $p\in\Sigma_1\cap\Sigma_2$ such that   $\nu_1(p)=\nu_2(p)$. If $\Sigma_1$ lies above $\Sigma_2$ around $p$ according to $\nu_i(p)$, denoted by $\Sigma_1\geq \Sigma_2$, then $H_\phi^1(p)\geq H_\phi^2(p)$ (comparison principle). In addition,  if  $H_\phi^1=H_\phi^2=\mbox{constant}$,  then $\Sigma_1$ and $\Sigma_2$ coincide in an open set around  $p$ (tangency principle).  The same statement holds if $p$ is a common boundary point with the extra condition that $\partial\Sigma_1$ and $\partial \Sigma_2$ are tangent at $p$. 
\end{proposition}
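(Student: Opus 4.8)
The plan is to reduce this geometric statement to the strong maximum principle for a single quasilinear elliptic equation, after writing both surfaces as graphs over their common tangent plane $T_p\Sigma_1=T_p\Sigma_2$. First I would fix a vector orthonormal frame of $\r^3$ centred at $p$ so that $\nu_1(p)=\nu_2(p)$ points in the positive $z$-direction and the common tangent plane is the $xy$-plane. By continuity of the Gauss maps, on a neighbourhood $\Omega\subset\r^2$ of the origin both surfaces are graphs $z=u_i(x,y)$, $i=1,2$, with upward unit normal equal to $\nu_i$; since the surfaces are tangent at $p$ we have $u_i(0)=0$ and $\nabla u_i(0)=0$. With the sign convention of the paper (for which a sphere of radius $r$ has $H=2/r$ for the inner normal), the weighted mean curvature \eqref{mw} of the graph of $u$ becomes the quasilinear operator
$$\mathcal{Q}[u]=\mbox{div}\left(\frac{\nabla u}{\sqrt{1+|\nabla u|^2}}\right)+\mathcal{F}(x,y,u,\nabla u),$$
where $\mathcal{F}$ gathers the first-order contribution of the density term $\langle\nu,\overline{\nabla}\phi\rangle$ and is of lower order. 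Thus $H_\phi^i=\mathcal{Q}[u_i]$ on $\Omega$, the principal part is the (elliptic) minimal surface operator, and $\mathcal{Q}$ is a quasilinear elliptic operator to which the theory of \cite{gt} applies. The hypothesis $\Sigma_1\geq\Sigma_2$ translates into $w:=u_1-u_2\geq0$ on $\Omega$ with $w(0)=0$ and $\nabla w(0)=0$.

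For the comparison principle I would simply evaluate at the origin. Since $\nabla u_1(0)=\nabla u_2(0)=0$ and $\nu_1(p)=\nu_2(p)$, the lower-order terms $\mathcal{F}$ of $\mathcal{Q}[u_1]$ and $\mathcal{Q}[u_2]$ coincide at $0$, while the divergence term reduces to the Laplacian, so
$$H_\phi^1(p)-H_\phi^2(p)=\Delta u_1(0)-\Delta u_2(0)=\Delta w(0).$$
As $w\geq0$ attains a minimum at the interior point $0$, its Laplacian there is nonnegative, whence $H_\phi^1(p)\geq H_\phi^2(p)$.

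For the tangency principle I would use the strong maximum principle. When $H_\phi^1=H_\phi^2$ is a constant, subtracting the two equations $\mathcal{Q}[u_1]=\mathcal{Q}[u_2]$ and writing the difference as an integral of the linearisation along the segment $u_t=tu_1+(1-t)u_2$,
$$0=\mathcal{Q}[u_1]-\mathcal{Q}[u_2]=\int_0^1\frac{d}{dt}\mathcal{Q}[u_t]\,dt=Lw,$$
yields a homogeneous linear operator $Lw=a^{ij}\partial_{ij}w+b^i\partial_i w+cw$ whose coefficients are obtained by integrating the derivatives of $\mathcal{Q}$. Because the linearised minimal surface operator is uniformly elliptic on the bounded gradients $\nabla u_t$, the matrix $(a^{ij})$ is uniformly positive definite on $\Omega$, and the coefficients are continuous. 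Since $w\geq0$, $w(0)=0$ and $Lw=0$, the strong maximum principle of E. Hopf forces $w\equiv0$ near $0$, i.e. $\Sigma_1=\Sigma_2$ around $p$. In the boundary version the origin lies on a smooth boundary arc of $\Omega$ (the common projection of the tangent boundary curves $\partial\Sigma_1,\partial\Sigma_2$), so the domain satisfies the interior sphere condition; here $w\geq0$, $w(0)=0$, $Lw=0$, and $\nabla w(0)=0$ contradicts the strict sign of the outward normal derivative in the Hopf boundary point lemma unless $w\equiv0$.

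The main obstacle is the bookkeeping of the tangency step: one must check that $\mathcal{Q}[u_1]-\mathcal{Q}[u_2]$ is genuinely a uniformly elliptic linear operator in $w$ with coefficients controlled on a fixed neighbourhood, so that Hopf's theorems apply. The lower-order density term $\mathcal{F}$ is harmless for ellipticity since it only affects $b^i$ and $c$; the one delicate point is the sign of the zeroth-order coefficient $c$, which need not be nonpositive. This is circumvented by replacing $c$ with $-|c|$, for which $w$ becomes a supersolution $L'w\leq0$ with $c'\leq0$ attaining the nonpositive interior minimum value $0$, so the strong minimum principle of \cite{gt,ps} still yields $w\equiv0$.
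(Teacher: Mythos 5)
Your proposal is correct, and it is essentially the paper's own approach: the paper offers no proof of this proposition at all, treating it as standard elliptic theory by citing \cite{gt} and \cite{ps}, and your argument (writing both surfaces as graphs over the common tangent plane, evaluating at the tangency point for the comparison statement, and applying the strong maximum principle and Hopf boundary point lemma to the linearized difference equation, with the usual trick of replacing the zeroth-order coefficient $c$ by $-|c|$ to restore its sign condition) is precisely the standard argument those references supply.
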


If both surfaces satisfy $H_\phi=0$, that is, they are stationary surfaces for the same value of $\alpha$, and $\Sigma_1\geq \Sigma_2$ around a point, no matter what the orientation is,  because reversing the orientation, the equation  $H_\phi=0$ is preserved. We show some applications of the tangency principle.

In the theory of surfaces with constant mean curvature, the tangency principle is employed in the reflection technique. Alexandrov proved that spheres are the only embedded closed surfaces with constant mean curvature \cite{al}. However, this technique fails   for stationary surfaces because reflections about planes do not preserve the solutions of  \eqref{eq1} (see Prop. \ref{pr0}). Comparing with spheres centered at the origin, together the tangency principle, we can give an alternative proof of Thm. 1.6 of \cite{dh}, and improve the case $\alpha=-2$, where we drop the stability condition assumed there.  

\begin{theorem}\label{t32}
  Spheres centered at the origin are only closed $\alpha$-stationary surfaces.
\end{theorem}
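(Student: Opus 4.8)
The plan is to use spheres centered at the origin as comparison surfaces and to exploit that the radial function $p\mapsto|p|$ attains a maximum and a minimum on a compact surface. The starting point is the weighted mean curvature of a sphere $S_r=\{|p|=r\}$ centered at $0$. Recall that the quantity whose vanishing is precisely \eqref{eq1} is $H_\phi=H-\alpha\langle\nu,p\rangle/|p|^2$. Orienting $S_r$ by its outward normal $\nu=p/r$, so that $H=-2/r$ and $\langle\nu,p\rangle=r$, one obtains
\[ H_\phi^{S_r}=-\frac{2+\alpha}{r}. \]
Thus $H_\phi^{S_r}=0$ exactly when $\alpha=-2$, recovering item (1) of Prop. \ref{pr1}, while for $\alpha\neq-2$ the sign of $H_\phi^{S_r}$ is the sign of $-(2+\alpha)$.

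Now let $\Sigma$ be a closed (compact, without boundary) connected $\alpha$-stationary surface with $0\notin\Sigma$. Since $\Sigma$ is compact and misses the origin, the numbers $R=\max_{p\in\Sigma}|p|$ and $\rho=\min_{p\in\Sigma}|p|$ are positive and attained, say at $p_0$ and $p_1$. At each extremum the tangential gradient of $|p|^2$ vanishes, so the position vector is normal to $\Sigma$ there; hence $\Sigma$ is tangent to $S_R$ at $p_0$ and to $S_\rho$ at $p_1$, with normals aligned to the outward sphere normals. First I would apply the comparison principle at $p_0$: since $\Sigma\subset\overline{B_R}$, the sphere $S_R$ lies above $\Sigma$ in the direction of the common normal, so $H_\phi^{S_R}(p_0)\ge H_\phi^{\Sigma}(p_0)=0$, giving $-(2+\alpha)/R\ge0$ and $\alpha\le-2$. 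Then at $p_1$ the surface $\Sigma$ lies in $\r^3\setminus B_\rho$, so now $\Sigma$ lies above $S_\rho$, yielding $0=H_\phi^{\Sigma}(p_1)\ge H_\phi^{S_\rho}(p_1)=-(2+\alpha)/\rho$, hence $\alpha\ge-2$. Combining both inequalities forces $\alpha=-2$; in particular for every $\alpha\neq-2$ there is no closed stationary surface, which is the closed-surface content of items (1) and (3) of \cite{dh}.

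It remains to treat $\alpha=-2$, and here the tangency principle does the real work. For this value $H_\phi^{S_r}=0$ for all $r$, so the spheres centered at $0$ are themselves stationary. At the farthest point $p_0$ the surfaces $S_R$ and $\Sigma$ are tangent, share their orientation, both satisfy $H_\phi=0$, and $S_R\ge\Sigma$; the tangency principle then gives that $\Sigma$ and $S_R$ coincide on a neighborhood of $p_0$. A standard continuation argument finishes the proof: the set of points of $\Sigma$ where $\Sigma$ agrees with $S_R$ as a germ is nonempty, open by construction, and closed by a further application of the tangency principle at limit points, so by connectedness of $\Sigma$ it is all of $\Sigma$ and $\Sigma\subseteq S_R$. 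As $\Sigma$ is a compact boundaryless surface sitting inside the connected surface $S_R$, it is open and closed in $S_R$, whence $\Sigma=S_R$, a sphere centered at the origin.

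I expect the main difficulty to be bookkeeping rather than conceptual: the orientations must be fixed so that the comparison inequality points the correct way at \emph{both} the maximal and the minimal radius, and one must check carefully that the position vector is normal to $\Sigma$ at these extrema so that the tangency hypotheses (coincident tangent planes and normals) genuinely hold. The computation of $H_\phi^{S_r}$ and the final continuation argument are routine once the sign conventions are pinned down.
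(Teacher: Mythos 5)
Your proposal is correct and follows essentially the same route as the paper: the paper shrinks a large sphere $\s^2(r)$ centered at $0$ until first contact (your maximum point $p_0$) to get $\alpha\leq-2$ via the comparison principle, expands a small sphere until first contact (your minimum point $p_1$) to get $\alpha\geq-2$, and invokes the tangency principle plus connectedness for the rigidity when $\alpha=-2$. The only differences are cosmetic: you phrase the contact points as extrema of $|p|$ and work with the outward orientation (so your $H_\phi^{S_r}=-(2+\alpha)/r$ matches the paper's $(2+\alpha)/r_0$ up to the orientation flip), and you force $\alpha=-2$ before proving rigidity rather than interleaving the two steps.
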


\begin{proof}
Let $\Sigma$ be a closed $\alpha$-stationary surface.    Let $r>0$ be sufficiently big so $ \Sigma$ is included in the round ball determined by the sphere $\s^2(r)$ of radius $r>0$ centered at the origin. Letting $r\searrow 0$, we arrive until the first touching point   with $\Sigma$. Suppose that this occurs for $r=r_0$ and let $p\in\Sigma\cap \s^2(r_0)$. Consider the weighted mean curvature 
$H_\phi$ given in \eqref{mw}. With respect to the inward orientation on $\s^2(r)$, we have
$$H_\phi^{\s^2(r_0)}(p)=\frac{2}{r_0}-\alpha\frac{\langle\nu(p),p\rangle}{|p|^2}=\frac{2+\alpha}{r_0}.$$
For  $\Sigma$, we have $H_\phi^\Sigma=0$ regardless the orientation on $\Sigma$. Since $\Sigma\geq \s^2(r_0)$ around $p$, the comparison principle gives 
$$0\geq \frac{2+\alpha}{r_0}.$$
This yields  $\alpha\leq -2$. If $\alpha=-2$, the tangency principle implies that $\s^2(r_0)$ and $\Sigma$ coincide in an open set around $p$. By connectedness, we conclude $\s^2(r_0)=\Sigma$. 

Finally, we prove that there are no closed $\alpha$-stationary surfaces if $\alpha<-2$.  Let $r>0$ be sufficiently small  such that $\s^2(r)\cap\Sigma=\emptyset$. Let $r\nearrow\infty $ until the first touching point with $\Sigma$ at $r=r_1$.  With the inward orientation on $\s^2(r_1)$, we have $\s^2(r_1)\geq \Sigma$ around the contact point and the comparison principle yields
$$\frac{2+\alpha}{r_1}\geq 0.$$
This implies $\alpha\geq -2$, which it is a contradiction. 

\end{proof}

The last part in the above proof   can be used to conclude the following result: see also Thm. 1.6 in \cite{dh}.
\begin{corollary}\label{cor1}
Let  $\Sigma$ be a properly immersed $\alpha$-stationary surface. If $\alpha<-2$, then the closure of $\Sigma$ contains the origin $0\in\r^3$. 
\end{corollary}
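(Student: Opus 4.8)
The plan is to argue by contradiction, mirroring the inflating-sphere step from the final part of the proof of Theorem \ref{t32}. Suppose that $0\notin\overline{\Sigma}$, so that $d:=\mathrm{dist}(0,\Sigma)>0$. First I would use properness to produce an honest contact point: choosing a minimizing sequence $p_n\in\Sigma$ with $|p_n|\to d$, the images $\Phi(p_n)$ eventually lie in the compact ball $\overline{B(0,d+1)}$, whose preimage under the proper map $\Phi\colon\Sigma\to\r^3$ is compact; hence a subsequence of the $p_n$ converges to some $p_0\in\Sigma$ with $|p_0|=d$. This is the only place where properness enters, and it is exactly what guarantees a genuine first touching point of $\Sigma$ rather than a mere accumulation point lying outside $\Sigma$.

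Next I would run the comparison with spheres centred at the origin, just as in Theorem \ref{t32}. For $0<r<d$ the sphere $\s^2(r)$ is disjoint from $\Sigma$, and letting $r\nearrow d$ the first contact occurs at $r=d$, at the point $p_0$. Since $|p|\ge d$ on all of $\Sigma$, near $p_0$ the surface lies outside the ball $B(0,d)$, so with respect to the inward orientation on $\s^2(d)$ one has $\s^2(d)\ge\Sigma$ around $p_0$. The comparison principle then gives $H_\phi^{\s^2(d)}(p_0)\ge H_\phi^{\Sigma}(p_0)$. As already computed in the proof of Theorem \ref{t32}, the inward-oriented sphere satisfies $H_\phi^{\s^2(d)}=\tfrac{2+\alpha}{d}$, whereas $H_\phi^{\Sigma}=0$ because $\Sigma$ is $\alpha$-stationary. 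Hence $\tfrac{2+\alpha}{d}\ge 0$, that is $\alpha\ge -2$, contradicting the hypothesis $\alpha<-2$. Therefore $0\in\overline{\Sigma}$.

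I expect the only delicate point to be the attainment of the infimum $d$ at a bona fide point of $\Sigma$: for a merely immersed, possibly non-compact surface the distance to the origin need not be realised, and without a tangent contact point there is nothing at which to invoke the comparison principle. Properness removes this difficulty by making the relevant sublevel sets compact; at the resulting $p_0$ the immersion is locally an embedding, so $\Sigma$ and $\s^2(d)$ may be compared exactly as in the closed case treated in Theorem \ref{t32}. Everything else is a direct transcription of the inflating-sphere step carried out there, now with $r_1=d$.
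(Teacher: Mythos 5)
Your proof is correct and follows essentially the same route as the paper, which simply reuses the inflating-sphere step from the last part of the proof of Theorem \ref{t32}: compare $\Sigma$ with spheres $\s^2(r)$ growing from the origin, and at the first (tangential) contact point the comparison principle with $H_\phi^{\s^2(d)}=\tfrac{2+\alpha}{d}$ forces $\alpha\geq -2$. Your explicit use of properness to guarantee that the distance to the origin is attained at a genuine point $p_0\in\Sigma$ is exactly the detail the paper leaves implicit, and it is handled correctly.
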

\section{Applications of the tangency principle}\label{s23}

Once the class of closed stationary surfaces is fully classified, it is natural to focus on compact stationary surfaces with boundary. As usually, if $\Gamma\subset\r^3$ is a curve, we say that a surface $\Sigma$ spans $\Gamma$ if there is an immersion $\Phi\colon\Sigma\to\r^3$ such that $\Phi_{|\partial\Sigma}\colon\partial\Sigma\to\Gamma$ is a diffeomorphism.  The simplest case of boundary is a circle. There are examples of compact $\alpha$-stationary surfaces spanning a circle: round discs in vector planes (for all $\alpha$) and spherical caps in spheres ($\alpha=-2,-4$).  As we will prove in Sect. \ref{s4}, there are rotational compact stationary surfaces spanning a circle for other values of $\alpha$. This is obtained by taking suitable pieces of rotational stationary surfaces that intersect orthogonally the rotation axis: see Fig. \ref{fig0}. We point out again, that  the reflection Alexandrov method cannot be applied to deduce that a compact stationary surface spanning a circle is a surface of revolution. 

 \begin{figure}[hbtp]
\begin{center}
\includegraphics[width=.4\textwidth]{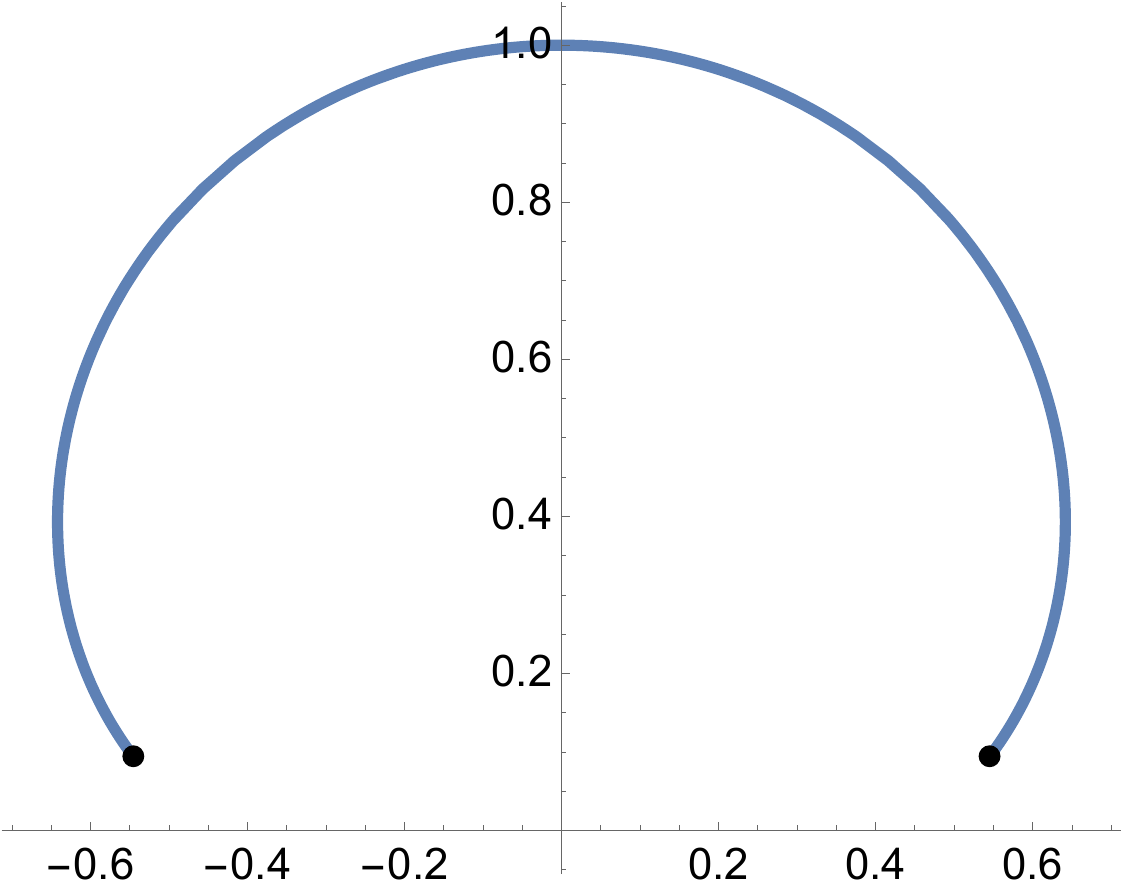}, \includegraphics[width=.35\textwidth]{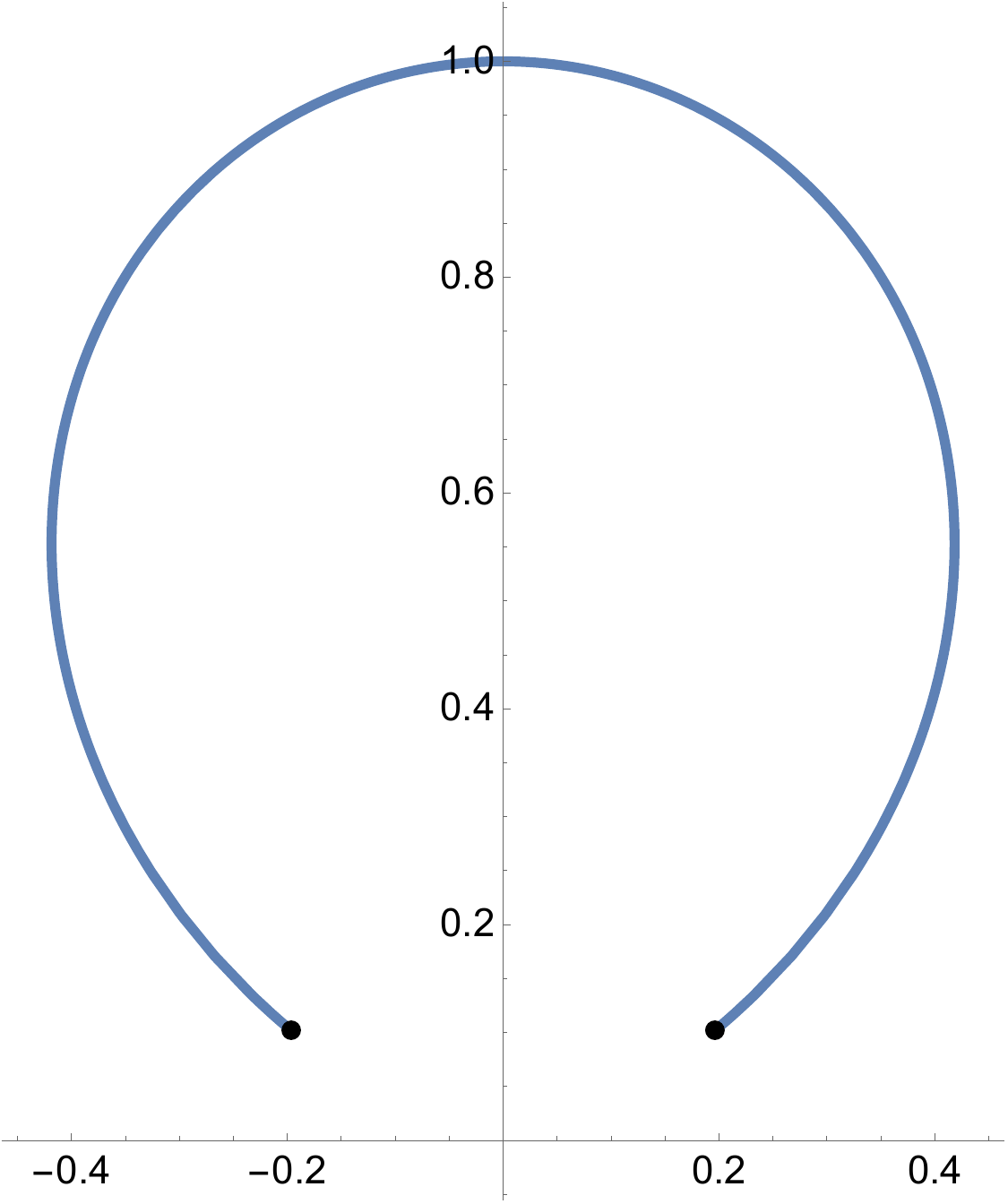}
\end{center}
\caption{Rotational stationary   surfaces whose boundary is a circle (black points). Here $\alpha=-3$ (left) and $\alpha=-5$ (right).}\label{fig0}
\end{figure}

When $\alpha=-2$, the circles contained in the spheres centered at $0$ are all invariant by rotations about an axis through $0$. However, if $\alpha=-4$, the circles contained in the spheres crossing $0$ are invariant by rotations but the rotation axis does not necessarily crosses $0$.

In a more general setting, we ask how the geometry of a given curve of $\r^3$ determines the shape of the possible stationary surfaces that spans.  In this sense, the comparison and tangency principle allows to control the shape of the surface in terms of the geometry of its boundary. An example is the following result.

\begin{proposition} Let $\Sigma$ be a compact $\alpha$-stationary surface whose boundary is a circle centered at the $z$-axis and contained in a horizontal plane. If $\alpha> -1$, then  $\Sigma$ contained in the vertical cylinder $D\times\r$, where $D$ is the  horizontal round disc  bounded by $\partial\Sigma$. 
\end{proposition}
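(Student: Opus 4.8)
The plan is to apply the comparison principle of the preceding proposition using a sweeping family of coaxial solid cylinders as barriers. Write the boundary circle as $\partial\Sigma=\{(R\cos t,R\sin t,c):t\in\r\}$, centered on the $z$-axis in the horizontal plane $\{z=c\}$, and for $r\geq R$ set $C_r=\{(x,y,z):x^2+y^2\leq r^2\}$, so that $D\times\r=\overline{C_R}$ is precisely the solid cylinder we must reach. I orient each boundary cylinder $\partial C_r$ by the inward normal $\nu=(-\cos t,-\sin t,0)$ pointing toward the axis. Since $\Sigma$ is compact, $\Sigma\subset C_r$ for all sufficiently large $r$, which fixes the starting position of the sweep.

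First I would compute the weighted mean curvature of the barrier. At $p=(r\cos t,r\sin t,z)\in\partial C_r$ one has $H=1/r$ for the inward orientation, $\langle\nu,p\rangle=-r$ and $|p|^2=r^2+z^2$, so, reading \eqref{mw} as in the sphere computation of Thm. \ref{t32},
\begin{equation*}
H_\phi^{\partial C_r}(p)=\frac{1}{r}-\alpha\frac{\langle\nu,p\rangle}{|p|^2}=\frac{1}{r}+\frac{\alpha r}{r^2+z^2}.
\end{equation*}
Multiplying by $r(r^2+z^2)>0$, this has the sign of $(1+\alpha)r^2+z^2$, which is strictly positive whenever $\alpha>-1$, for every height $z$. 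Thus, precisely under the hypothesis $\alpha>-1$, each coaxial cylinder is a strict barrier with $H_\phi^{\partial C_r}>0$ in the inward orientation (consistent with Prop. \ref{pr3}, cylinders never being stationary).

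Next I would run the sweep. Let $r^\ast=\inf\{r\geq R:\Sigma\subset\overline{C_r}\}$, well defined by compactness, and suppose for contradiction that $r^\ast>R$. Then $\Sigma\subset\overline{C_{r^\ast}}$ and there is a contact point $p_0\in\Sigma\cap\partial C_{r^\ast}$. Because $\partial\Sigma$ lies on the cylinder of radius $R<r^\ast$, the point $p_0$ has cylindrical radius $r^\ast$ and hence is an interior point of $\Sigma$. Near $p_0$ the surface lies inside $C_{r^\ast}$, i.e. on the side of the inward normal, so $\Sigma\geq\partial C_{r^\ast}$ in the sense of the comparison principle; choosing on $\Sigma$ the normal agreeing with the inward normal of the cylinder and using $H_\phi^\Sigma=0$ yields
\begin{equation*}
0=H_\phi^\Sigma(p_0)\geq H_\phi^{\partial C_{r^\ast}}(p_0)>0,
\end{equation*}
a contradiction. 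Hence $r^\ast=R$ and $\Sigma\subset\overline{C_R}=D\times\r$.

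The computation and the sweep are routine; the step requiring the most care is verifying that the contact point is genuinely interior to $\Sigma$, which is exactly why one sweeps only with radii $r>R$ and uses that the boundary circle sits on the extremal cylinder $\partial C_R$ (so no spurious contact occurs on $\partial\Sigma$ while $r>R$). The sole role of $\alpha>-1$ is to make the barrier cylinders strictly $H_\phi$-convex toward the axis; for $\alpha\leq-1$ the sign of $(1+\alpha)r^2+z^2$ is no longer controlled at small heights and the argument breaks down.
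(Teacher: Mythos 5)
Your proof is correct and follows essentially the same route as the paper: coaxial circular cylinders as barriers, the identical computation $H_\phi^{C_r}=\frac{(1+\alpha)r^2+z^2}{r|p|^2}>0$ for $\alpha>-1$, and the comparison principle at the first (innermost) contact cylinder. The only difference is organizational: the paper shrinks $r$ from infinity and rules out interior contact so that the first touch must occur along $\partial\Sigma$ at radius $R$, whereas you stop the sweep at $r^\ast=\inf\{r\geq R:\Sigma\subset\overline{C_r}\}$ and note that any contact at radius $r^\ast>R$ is automatically interior --- the same idea phrased so the boundary case never arises.
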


\begin{proof} 
Let $C_r$ be a circular cylinder of radius $r>0$ whose rotation axis is the $z$-axis. With respect to the inward orientation on $C_r$, we have 
$$H_\phi^{C_r}(p)=\frac{1}{r}-\alpha\frac{\langle\nu(p),p\rangle}{|p|^2}=\frac{|p|^2+\alpha r^2}{r|p|^2}=
\frac{(1+\alpha)r^2+z^2}{r|p|^2}.$$
If $\alpha> -1$, then $H_\phi^{C_r}>0$.  Let $r$ be sufficiently big so $\Sigma$ is   included in the domain bounded by $C_r$. Letting $r\searrow 0$, consider the first cylinder $C_{r_0}$ that touches $\Sigma$ at some point $p\in\Sigma\cap C_{r_0}$. Suppose that $p$ is an interior point of $\Sigma$. Reversing the orientation on $\Sigma$ if necessary, and because the orientation on $C_{r_0}$ points inside, we have $\Sigma\geq C_{r_0}$ around $p$. The comparison principle yields $0\geq H_\phi^{C_{r_0}}$ but $H_\phi^{C_{r_0}}$ is positive which it is a contradiction. Therefore,  the first contact occurs at a boundary point of $\Sigma$.   Since $\partial\Sigma$ is a circle, and because $\partial\Sigma\subset C_{r_0}$, the  result is proved. 
\end{proof}

In Fig. \ref{fig0} we show examples of compact $\alpha$-stationary surfaces spanning a circle but the surfaces are not contained in the cylinder $D\times\r$. The value of $\alpha$ is less than $-1$.

Instead using cylinders, we can employ spheres $\s^2(r)$ centered at the origin.  This allows to prove the following result of uniqueness.

\begin{theorem}\label{t28}
If $\alpha=-2$,  spherical caps are the only compact stationary surfaces bounded by a circle whose rotation axis  crosses $0$.
\end{theorem}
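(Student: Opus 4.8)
The plan is to compare $\Sigma$ with the concentric family of spheres $\s^2(r)$ centered at the origin, which for $\alpha=-2$ are themselves stationary (so $H_\phi=0$), and to run the sweeping argument from the proof of Thm. \ref{t32}, this time bookkeeping the boundary. First I would normalize coordinates so that the rotation axis of $\partial\Sigma$ is the $z$-axis; since the hypothesis is that this axis crosses $0$, the circle lies in a horizontal plane $\{z=c\}$, is centered at $(0,0,c)$, and has some radius $\rho$. The decisive elementary observation is that every point of $\partial\Sigma$ has distance $R:=\sqrt{\rho^2+c^2}$ to the origin, so $\partial\Sigma\subset\s^2(R)$. Because $\Sigma$ is compact and $0\notin\Sigma$, both $r_{\max}:=\max_{\Sigma}|p|$ and $r_{\min}:=\min_{\Sigma}|p|$ are attained, and the boundary points lying on $\s^2(R)$ force $0<r_{\min}\le R\le r_{\max}$.

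Next I would show $r_{\max}=R$ by a shrinking-sphere argument. Taking $r$ large so that $\Sigma$ lies in the open ball $B(r)$ and letting $r\searrow r_{\max}$, the first contact occurs at $r=r_{\max}$ at a point $p_0$ with $|p_0|=r_{\max}$. If $r_{\max}>R$, then $p_0$ cannot be a boundary point (boundary points have $|p|=R$), hence $p_0\in\mathrm{int}(\Sigma)$, and near $p_0$ the surface $\Sigma$ lies inside $\s^2(r_{\max})$ and is tangent to it. Since for $\alpha=-2$ one has $H_\phi^{\s^2(r_{\max})}=\frac{2+\alpha}{r_{\max}}=0=H_\phi^{\Sigma}$, the tangency principle applies (the orientation being immaterial, as noted after its statement) and forces $\Sigma$ to coincide with $\s^2(r_{\max})$ on an open set. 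By connectedness of $\Sigma$ together with real-analyticity of solutions of \eqref{mw}, this coincidence set is open and closed, so $\Sigma\subset\s^2(r_{\max})$; but then $\partial\Sigma\subset\s^2(r_{\max})$ while $\partial\Sigma\subset\s^2(R)$ with $r_{\max}\ne R$, a contradiction. Hence $r_{\max}=R$. An entirely symmetric expanding-sphere argument, letting $r\nearrow r_{\min}$ from below (where a first interior contact would place $\Sigma$ locally outside $\s^2(r_{\min})$), yields $r_{\min}=R$.

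Combining the two equalities gives $|p|=R$ for every $p\in\Sigma$, so $\Sigma\subset\s^2(R)$; being a connected piece of $\s^2(R)$ bounded by the circle $\partial\Sigma$, it is a spherical cap, as claimed. The only steps needing care, and the main (mild) obstacle, are: checking that each first contact is genuinely interior --- this is exactly where the hypothesis that the axis crosses $0$ is used, since it guarantees $\partial\Sigma$ sits on the single concentric sphere $\s^2(R)$, so that any point with $|p|\ne R$ must be interior; and upgrading local to global coincidence, for which I would invoke connectedness of $\Sigma$ and the real-analyticity of the stationary equation \eqref{eq1}.
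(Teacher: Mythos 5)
Your proof is correct, and it rests on the same machinery as the paper's: a two-sided sweep by the concentric spheres $\s^2(r)$ (stationary for $\alpha=-2$ by Prop. \ref{pr1}), the tangency principle, and connectedness. The organization, however, is genuinely different and in fact cleaner. The paper sweeps inward to a first contact radius $r_0\geq r_1$ and then must distinguish three cases according to the nature of the contact point: an interior point (done), a boundary point where the tangent planes agree (requiring the \emph{boundary} version of the tangency principle), or a boundary point with transverse contact, which is only excluded afterwards by a second, outward sweep. You instead exploit the hypothesis at the outset: since the axis crosses $0$, the boundary circle lies on a single concentric sphere $\s^2(R)$ with $R=\sqrt{\rho^2+c^2}$, so \emph{any} first contact occurring at a radius different from $R$ is automatically at an interior point. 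Proving $r_{\max}=R$ and $r_{\min}=R$ by contradiction then uses only the interior tangency principle, and $r_{\min}=r_{\max}=R$ forces $\Sigma\subset\s^2(R)$ directly. What your route buys: no boundary-contact case analysis and no need for the boundary tangency principle; the price is nil, since your identity $|p|\equiv R$ on $\partial\Sigma$ is precisely where the hypothesis on the axis enters (the paper uses it only to place $\Gamma$ on some sphere of unspecified radius $r_1$). One minor remark: your appeal to real-analyticity to upgrade local to global coincidence is legitimate but unnecessary; the coincidence set is already open and closed by iterating the tangency principle at points of its closure, which is how the paper argues ("by connectedness") in Thm. \ref{t32}.
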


\begin{proof} Let $\Gamma\subset\r^3$ be a circle whose rotation axis cross $0$. Then $\Gamma$ lies contained in a sphere $\s^2(r_1)$ of radius $r_1>0$. Let $\Sigma$ be a compact $\alpha$-stationary surface spanning $\Gamma$.  Following a similar argument as in the proof of Thm. \ref{t32}, let $r>0$ be sufficiently big so $\Sigma$ is included in the round ball determined by the sphere $\s^2(r)$. Letting $r\searrow 0$, we arrive until the first touching point between $\s^2(r_0)$ and $\Sigma$ for some $r_0>0$. In particular, $r_0\geq r_1$.  If the touching point is an interior point of $\Sigma$, then the tangency principle implies that $\Sigma\subset\s^2(r_0)$ and $r_1=r_0$. Since $\Gamma$ is a circle, then $\Sigma$ is a spherical cap of $\s^2(r_0)$.  If the touching point is a boundary point of $\Sigma$, and if the tangent planes of $\Sigma$ and $\s^2(r_0)$ coincide, then $\Sigma$ is a spherical cap  by the tangency principle again. In both cases, the result is proved.

Suppose now that we arrive until $r_1$ and the touching   point between $\Sigma$ and $\s^2(r_1)$ is not a tangent point.  In particular, $ \mbox{int}(\Sigma)$  lies contained in the round ball determined by $\s^2(r_1)$. We prove that this situation is not possible. Let now $r$ be sufficiently close to $0$ such that $\Sigma$ lies outside the ball determined by $\s^2(r)$. This is possible because $\Sigma$ is compact and $0\not\in\Sigma$. Letting $r\nearrow\infty$, we arrive to until the first touching point with $\Sigma$ at some $r=r_2$. Necessarily $r_2< r_1$ because we are assuming that $\mbox{int}(\Sigma)\cap\s^2(r_1)=\emptyset$.  Since the contact between $\Sigma$ and $\s^2(r_1)$ occurs at some interior point and both surfaces are stationary for $\alpha=-2$, the tangency principle implies that $\Sigma\subset\s^2(r_2)$ and $\Sigma$ is a spherical cap of $\s^2(r_2)$. This is a contradiction because   $r_1\not= r_2$. 
\end{proof}

In the theory of elliptic equations, the maximum principle is  also used to ensure uniqueness of the Dirichlet problem. However this is not true in general for Eq. \eqref{eq1} because translations of $\r^3$ do not preserve solutions of \eqref{eq1}.  Rather   graphs on planar domains, radial graphs are suitable in this type of arguments because solutions of \eqref{eq1} are preserved by dilations. 

\begin{proposition}  Let $\Omega$ be a domain of the unit sphere $\s^2$ and let $\varphi\in C^\infty(\partial\Omega)$ with $\varphi>0$. Then there is  at most one stationary radial graph on $\Omega$ with boundary data $\varphi$. 
\end{proposition}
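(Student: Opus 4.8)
The plan is to reduce the uniqueness to the tangency principle by sliding \emph{dilations} of one graph against the other, exactly as concentric spheres were slid in the proofs of Thms.~\ref{t32} and \ref{t28}. The key structural fact is that dilations from the origin preserve the equation (Prop.~\ref{pr0}(2)): if $\Sigma_2$ is the radial graph $\{u_2(x)\,x:x\in\overline{\Omega}\}$ of a positive function $u_2\in C^\infty(\overline\Omega)$, then for each $\lambda>0$ the dilated surface $h_\lambda(\Sigma_2)$ is again $\alpha$-stationary and is precisely the radial graph of $\lambda u_2$. Writing the two competitors as $\Sigma_i=\{u_i(x)\,x:x\in\overline\Omega\}$ with $u_i>0$ and $u_i|_{\partial\Omega}=\varphi$, a dilated copy of $\Sigma_2$ plays the role that the varying sphere $\s^2(r)$ played before.

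First I would set $\lambda_0=\max_{\overline\Omega}(u_1/u_2)$. Because $u_1=u_2=\varphi$ on $\partial\Omega$, the ratio equals $1$ on the boundary, so $\lambda_0\ge 1$, and if $\lambda_0>1$ the maximum is necessarily attained at an interior point $p_0\in\mbox{int}(\Omega)$. By construction $\lambda_0 u_2\ge u_1$ on $\overline\Omega$ with equality at $p_0$; hence the nonnegative function $\lambda_0u_2-u_1$ has an interior minimum value $0$ at $p_0$, its differential vanishes there, and geometrically this says that $\Sigma_1$ and $h_{\lambda_0}(\Sigma_2)$ are tangent at the common point $q_0=u_1(p_0)p_0$, with $\Sigma_1$ lying to one side. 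Both surfaces satisfy $H_\phi=0$, so the tangency principle forces them to coincide on an open neighbourhood of $q_0$, and by connectedness $\Sigma_1=h_{\lambda_0}(\Sigma_2)$, i.e.\ $u_1\equiv\lambda_0 u_2$ on $\overline\Omega$. Restricting to $\partial\Omega$ gives $\varphi=\lambda_0\varphi$, whence $\lambda_0=1$, contradicting $\lambda_0>1$. Thus $\lambda_0=1$ and $u_1\le u_2$; interchanging the roles of $\Sigma_1$ and $\Sigma_2$ yields $u_2\le u_1$, so $u_1\equiv u_2$ and the two graphs coincide.

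The main point to verify carefully is the passage from \emph{interior extremum of the ratio} to \emph{interior tangential contact with the correct one-sidedness}, so that the hypotheses of the tangency principle are genuinely met: one must check that the vanishing of $d(\lambda_0u_2-u_1)$ at $p_0$ translates into equality of the unit normals of $\Sigma_1$ and $h_{\lambda_0}(\Sigma_2)$ at $q_0$, and that $\Sigma_1\ge h_{\lambda_0}(\Sigma_2)$ holds near $q_0$ in the sense required by the principle. The boundary causes no difficulty, since the strict inequality $\lambda_0>1$ pushes the first contact into the interior; this is precisely what common boundary data $\varphi$ buys us, and it is why radial graphs, being preserved by dilations, are the natural objects for a maximum-principle uniqueness statement, whereas ordinary graphs over a planar domain are not, translations failing to preserve \eqref{eq1}.
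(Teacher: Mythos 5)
Your proof is correct and follows essentially the same route as the paper: you slide dilated copies $h_\lambda(\Sigma_2)$ against $\Sigma_1$ and apply the tangency principle at an interior point of first contact, your $\lambda_0=\max_{\overline\Omega}(u_1/u_2)$ being exactly the paper's first-contact parameter $t_0\geq 1$. In fact your argument supplies the details the paper omits (it only says ``use similar arguments as in Thm.~\ref{t28}''), namely that a contact with $\lambda_0>1$ is forced into the interior by the common boundary data, and that the resulting identity $u_1\equiv\lambda_0 u_2$ restricted to $\partial\Omega$ gives $\varphi=\lambda_0\varphi$, whence $\lambda_0=1$.
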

 
 \begin{proof}
 Let $\Gamma$ be the radial graph of $\varphi$. Suppose $\Sigma_1$ and $\Sigma_2$ are two stationary radial graphs on $\Omega$ with $\partial\Sigma_1=\partial\Sigma_2=\Gamma$. Let $h_t$ be the dilation of $\r^3$ of ration $t>0$ and let $\Sigma_2^t=h_t(\Sigma_2)$. Notice that $\Sigma_2^t$ is an $\alpha$-stationary surface by Prop. \ref{pr0}. For $t$ sufficiently big, we have $\Sigma_2^t\cap\Sigma_1=\emptyset$. Let $t\searrow 0$ until the first touching point with $\Sigma_1$ at $t=t_0$. We know that $t_0\geq 1$. We now use  similar arguments as in Thm. \ref{t28} to prove $t_0=1$. We omit the details.
 \end{proof}
 
We finish this section by revisiting Thm. \ref{t32} when $\alpha\geq -2$. We give a proof that it  only uses the divergence theorem, which it is weaker than the tangency principle  
 
\begin{proposition}
\begin{enumerate}
\item If $\alpha>-2$, there are not closed stationary surfaces. 
\item If $\alpha=-2$, spheres centered at $0$ are the only closed stationary surfaces.
\end{enumerate}
\end{proposition}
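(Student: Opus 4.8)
The plan is to bypass the tangency principle entirely and extract both statements from a single Minkowski-type integral identity, integrated by the divergence theorem over the closed surface $\Sigma$. Write $p^{\top}=p-\langle p,\nu\rangle\nu$ for the tangential component of the position field; equivalently $p^{\top}=\tfrac12\nabla_\Sigma|p|^2$, where $\nabla_\Sigma$ is the intrinsic gradient, so that $\mathrm{div}_\Sigma(p^{\top})=\tfrac12\Delta_\Sigma|p|^2$. The key step is to compute this intrinsic divergence. Using $\mathrm{div}_\Sigma(p)=2$ for the full position field together with the structure identity $\mathrm{div}_\Sigma(\langle p,\nu\rangle\nu)=-H\langle p,\nu\rangle$ (which reproduces the paper's convention that $\s^2(r)$ has $H=2/r$ with the inward normal), I get
\[
\mathrm{div}_\Sigma(p^{\top})=2+H\langle p,\nu\rangle .
\]
Substituting the stationary equation \eqref{eq1}, $H=\alpha\langle\nu,p\rangle/|p|^2$, converts this into the pointwise identity
\[
\mathrm{div}_\Sigma(p^{\top})=2+\alpha\,\frac{\langle p,\nu\rangle^2}{|p|^2}
\]
valid on every $\alpha$-stationary surface.

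Since $\Sigma$ is closed, the divergence theorem yields $\int_\Sigma\mathrm{div}_\Sigma(p^{\top})\,d\Sigma=0$, so the right-hand side must integrate to zero. For part (1) I would invoke only the elementary estimate $0\le\langle p,\nu\rangle^2/|p|^2\le 1$: when $\alpha>-2$ the integrand $2+\alpha\langle p,\nu\rangle^2/|p|^2$ is strictly positive everywhere, its minimum over the interval being $\min\{2,2+\alpha\}>0$, which is incompatible with a vanishing integral; hence no closed $\alpha$-stationary surface exists. For part (2) I set $\alpha=-2$ and use $|p|^2-\langle p,\nu\rangle^2=|p^{\top}|^2$ to rewrite the identity as
\[
\mathrm{div}_\Sigma(p^{\top})=2\,\frac{|p^{\top}|^2}{|p|^2}\ge 0 .
\]
Integrating and using $\int_\Sigma\mathrm{div}_\Sigma(p^{\top})\,d\Sigma=0$ forces $p^{\top}\equiv 0$ on $\Sigma$.

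It then remains to read off the geometry from $p^{\top}\equiv 0$. This says $\nabla_\Sigma|p|^2=2p^{\top}=0$, so $|p|$ is constant along the connected surface $\Sigma$, say $|p|\equiv r_0$, and hence $\Sigma$ lies in the sphere $\s^2(r_0)$ centered at the origin; being a closed surface of the same dimension inside the connected $\s^2(r_0)$, it is open and closed there, so $\Sigma=\s^2(r_0)$. The only genuine obstacle in this argument is the bookkeeping in the Minkowski-type identity: one must pin down the sign of the $H\langle p,\nu\rangle$ term so that it matches the paper's convention ($H$ the sum of principal curvatures) and, crucially, produces the exact cancellation that collapses the integrand into a quantity controlled by the single scalar $\langle p,\nu\rangle^2/|p|^2$ after inserting \eqref{eq1}. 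Once that identity is established, both claims follow from nothing beyond the divergence theorem and the trivial inequality $\langle p,\nu\rangle^2\le|p|^2$, which is exactly the ``weaker than the tangency principle'' proof promised.
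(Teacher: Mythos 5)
Your proof is correct and follows essentially the same route as the paper: your identity $\mathrm{div}_\Sigma(p^{\top})=2+H\langle p,\nu\rangle$ is exactly one half of the paper's starting formula $\Delta|p|^2=4+2H\langle\nu(p),p\rangle$ (since $p^{\top}=\tfrac12\nabla_\Sigma|p|^2$), and both arguments then insert Eq. \eqref{eq1}, integrate by the divergence theorem, and conclude via the inequality $\langle\nu,p\rangle^2\le|p|^2$. The only difference is presentational: you derive the Minkowski-type identity from the decomposition of the position field and spell out the final step ($p^{\top}\equiv 0$ forcing $|p|$ constant, hence $\Sigma=\s^2(r_0)$) in more detail than the paper does.
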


\begin{proof} For any surface $\Sigma\subset\r^3$,  the Laplacian of the restriction of the norm $|p|^2$   on $\Sigma$   satisfies
$$\Delta|p|^2=4+2H\langle\nu(p),p\rangle.$$
Suppose that $\Sigma$ is an $\alpha$-stationary surface. By using Eq. \eqref{eq1}, we obtain
\begin{equation}\label{delta}
\Delta|p|^2=4+2\alpha\frac{\langle\nu(p),p\rangle^2}{|p|^2}.
\end{equation}
\begin{enumerate}
\item Suppose that $\Sigma $ is a closed  surface and that  $\alpha>-2$. Using the divergence theorem in \eqref{delta} and because $\partial\Sigma=\emptyset$, we have 
$$0=\int_\Sigma \left(4+2\alpha\frac{\langle\nu(p),p\rangle^2}{|p|^2}\right)\, d\Sigma=2\int_\Sigma \frac{2|p|^2+\alpha \langle\nu(p),p\rangle^2}{|p|^2}\, d\Sigma.$$
If $\alpha\geq 0$, this is not possible. If $-2<\alpha<0$ and because  $\langle\nu(p),p\rangle^2\leq |p|^2$ and $\alpha<0$, we have $\alpha\langle \nu(p),p\rangle^2\geq\alpha|p|^2$ for all $p\in \Sigma$. This yields
$$0= 2\int_\Sigma \left(\frac{2|p|^2+\alpha \langle\nu(p),p\rangle^2}{|p|^2}\right)\, d\Sigma\geq 2(2+\alpha)\mbox{area}(\Sigma)>0,$$
obtaining a contradiction.
\item If $\alpha  -2$, the same computation   yields 
$$0= 2\int_\Sigma \frac{2|p|^2-2 \langle\nu(p),p\rangle^2}{|p|^2}\, d\Sigma\geq 0.$$
This implies  $\langle\nu(p),p\rangle^2=|p|^2$ for all $p\in\Sigma$, hence $\Sigma$ is a sphere centered at $0$. 
\end{enumerate}
\end{proof}
    
\section{The rotation axis of a  stationary rotational surface}\label{s3}

In the next sections, we investigate axisymmetric stationary surfaces, that is, stationary surfaces invariant by a one-parametric group of rotations of $\r^3$.  In the definition of the energy $E_\alpha$, the origin $0\in\r^3$  is a privileged point because the moment of inertial is calculated with respect to $0\in\r^3$.   In principle there is not {\it a priori} a relation between the rotation axis of an axisymmetric stationary surface and the origin.   For example, if $\alpha=-4$, we know that any sphere   containing the origin is stationary and this sphere is a surface of revolution with respect to any line through its center which may not cross $0$.  However, in the next result we prove that this the only exception. 

\begin{proposition} \label{pr2}
Let $\Sigma$ be an axisymmetric surface about an axis $L$. If $\Sigma$ is stationary, then either $0\in L$   or   $\Sigma$ is a sphere containing $0$.
\end{proposition}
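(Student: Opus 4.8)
The plan is to use the rotational symmetry to manufacture, for the fixed surface $\Sigma$, a whole \emph{circle} of admissible centers for the stationarity equation, and then to show that being stationary with respect to so many centers at once forces the profile curve onto a sphere through $0$. First I would normalize: assuming $0\notin L$, apply a vector isometry of $\r^3$, which preserves \eqref{eq1} by Prop.\ \ref{pr0}, so that $L$ is the vertical line $\{(a,0,t):t\in\r\}$ with $a>0$ (the case $a=0$ being exactly $0\in L$).

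The key observation is a mild generalization of Prop.\ \ref{pr0}: for any rigid motion $R$ of $\r^3$, since $|p|=|R(p)-R(0)|$, the surface $\Sigma$ satisfying \eqref{eq1} implies that $R(\Sigma)$ satisfies $H(p)=\alpha\langle\nu(p),p-R(0)\rangle/|p-R(0)|^2$, i.e.\ stationarity recentered at $R(0)$. Taking $R=R_\theta$ the rotation of angle $\theta$ about $L$ and using $R_\theta(\Sigma)=\Sigma$, I conclude that $\Sigma$ is $\alpha$-stationary with respect to \emph{every} point $c_\theta:=R_\theta(0)$. Since $0$ lies at distance $a$ from $L$, these centers sweep out the circle $\mathcal C=\{(a(1-\cos\theta),-a\sin\theta,0):\theta\in\r\}$ of radius $a$ in the plane $z=0$.

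Because $\alpha\neq0$, at each $p=(p_1,p_2,p_3)\in\Sigma$ the quotient $\langle\nu(p),p-c\rangle/|p-c|^2$ equals $H(p)/\alpha=:m$ for all $c\in\mathcal C$. Imposing $\langle\nu,p-c_\theta\rangle=m|p-c_\theta|^2$ for every $\theta$ and matching the constant, $\cos\theta$ and $\sin\theta$ coefficients, after dividing the two trigonometric identities by $a>0$ I obtain $\nu_1=2m(p_1-a)$, $\nu_2=2mp_2$, together with the constant-term relation $\nu_3p_3=m(a^2-\rho^2+p_3^2)$, where $\rho^2=(p_1-a)^2+p_2^2$ is the squared distance of $p$ to $L$. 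The first two identities say the horizontal part of $\nu$ is radial with $\nu_\rho=2m\rho$. Passing to the profile curve $(\rho(s),z(s))$ with $z=p_3$, and using that the unit normal $(\nu_\rho,\nu_z)$ is orthogonal to the tangent $(\rho',z')$, I eliminate $m$ to reach the first-order equation $2\rho z\,\rho'+(a^2-\rho^2+z^2)\,z'=0$.

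To finish, I would verify that $G(\rho,z)=(\rho^2+z^2-a^2)/(2z)$ is a first integral of this equation, so that $\rho^2+(z-h)^2=a^2+h^2$ along the profile for some constant $h$; this is precisely the circle of the sphere of radius $R=\sqrt{a^2+h^2}$ centered at $(a,0,h)\in L$, and $R^2=a^2+h^2=|(a,0,h)|^2$ shows that this sphere contains $0$. The exceptional branch $z\equiv0$ of the equation is the horizontal plane through $0$, which is rotationally symmetric about an axis through the origin as well, consistent with the statement. The main obstacle is the key observation that symmetry produces a full circle of centers, together with the correct identification of the first integral $G$; the Fourier matching and the isolated degenerate points ($\rho=0$ or $z=0$) are routine and handled by continuity and connectedness of the profile.
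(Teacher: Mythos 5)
Your proposal is correct, but it reaches the decisive equation by a genuinely different mechanism than the paper. The paper's proof is computational: it parametrizes $\Sigma$ about the translated axis, computes the principal curvatures through the angle function $\psi$, substitutes into \eqref{eq1} and expands in $\cos t$, obtaining the two equations $A_1=0$ and $A_0=0$ of \eqref{u3}; when $q_1\neq 0$ these combine into \eqref{ss2}, which is then solved explicitly as a graph, giving \eqref{u2}. Your recentering observation---that a rigid motion $R$ carries a solution of \eqref{eq1} into a solution of the same equation centered at $R(0)$, so axisymmetry produces a whole circle $\mathcal{C}$ of admissible centers---replaces the curvature computation entirely: $H(p)$ is never computed, only absorbed into the pointwise quantity $m=H(p)/\alpha$ and then eliminated via orthogonality of $\nu$ to the profile tangent. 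In fact your equation $2\rho z\,\rho'+(a^2-\rho^2+z^2)\,z'=0$ is exactly \eqref{ss2} (with $q_1=a$, $x=\rho$), and your first integral $G$ is equivalent to the paper's explicit solution \eqref{u2}; both proofs also dispose of the exceptional planar branch the same way, by noting the plane $z=0$ is rotational about an axis through $0$. Two details you should spell out rather than call routine: (i) eliminating $m$ really gives $m\cdot\bigl[2\rho z\rho'+(a^2-\rho^2+z^2)z'\bigr]=0$, so you must note that $m=0$ forces $\nu=(0,0,\pm 1)$ and $p_3=0$, whence both terms vanish anyway and the ODE holds at every profile point; (ii) the passage across $\{z=0\}$ is where connectedness genuinely matters: a circle $G=h$ can meet $\{z=0\}$ only at $\rho=a$, whose orbit is the circle $\mathcal{C}$ containing $0$, excluded because $0\notin\Sigma$, and this is what forbids the profile from jumping between spheres with different $h$. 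As for what each approach buys: yours is shorter on computation and more conceptual, and generalizes readily to other densities depending only on $|p|$; the paper's calculation, by substituting the solution back into $A_1=0$, additionally pins down that this exceptional case occurs only for $\alpha=-4$, a fact your argument must instead import from Prop.~\ref{pr1}.
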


\begin{proof} Using Prop. \ref{pr0} and after a vector isometry of $\r^3$, we can assume that the rotation axis $L$  is parallel to the $z$-axis and contained in the $xz$-plane coordinate. Thus the equations of $L$ are $\{x=q_1, y=0\}$, with $q_1\in\r$. A parametrization of $\Sigma$ is 
$$\Phi(s,t)=(q_1+x(s)\cos t, x(s)\sin t,z(s)),\quad s\in I, t\in\r,$$
where  $\gamma(s)=(x(s),0,z(s))$, $s\in I\subset\r$,  is the generating curve of $\Sigma$.   We need to prove  that $q_1=0$ (and thus $0\in L$) or that  $\Sigma$ is a sphere with $0\in\Sigma$. 

 Without loss of generality,  we  suppose  that $\gamma$ is parametrized by arc-length.  Since $x'(s)^2+z'(s)^2=1$, there is a smooth function $\psi$ such that
\begin{equation*}
\begin{split}
x'(s)&=\cos\psi(s),\\
z'(s)&=\sin\psi(s).
\end{split}
\end{equation*}
 We calculate all terms of the stationary surface equation \eqref{eq1}. The unit normal vector of $\Sigma$ is 
 $$\nu= (-\sin\psi\cos t,-\sin\psi\sin t,\cos\psi).$$
 The principal curvatures of $\Sigma$ are 
 \begin{equation}\label{kk}
 \kappa_1=\psi',\quad \kappa_2=\frac{\sin\psi}{x},
 \end{equation}
  and thus the mean curvature is 
 $$H=\psi'+\frac{\sin\psi}{x}.$$
 Equation \eqref{eq1} becomes
 $$A_0(s)+A_1(s) \cos t=0,$$
 where
 \begin{equation}\label{u3}
 \begin{split}
  A_1 &=xq_1(2x\psi'+(2+\alpha)\sin\psi)\\
  A_0&=\sin \psi  \left(q_1^2+(\alpha +1) x^2+z^2\right)+x \psi ' (q_1^2+x^2+z^2)-\alpha  x z \cos \psi .
  \end{split}
  \end{equation}
 Therefore we have $A_0(s)=A_1(s)=0$ for all $s\in I$.  From the equation $A_1=0$, we have two possibilities. If $q_1=0$, then the rotation axis $L$ is the $z$-axis which contains the origin. This proves the result in the first case. 
 
 Suppose $q_1\not=0$. Then  $A_1=0$ implies $2x\psi'+(2+\alpha)\sin\psi =0$ identically in $I$. This gives
\begin{equation}\label{s1}
\psi'=-\frac{(2+\alpha)\sin\psi}{2x}.
\end{equation}
Moreover, by substituting the value of $\psi'$ in the expression of $A_0$, we have 
\begin{equation}\label{ss2}
\sin\psi (q_1^2-x^2+z^2)+2 x z \cos\psi =0.
\end{equation}
 We solve this equation. Since our study is local, let us  write $\gamma$ as a graph on the $z$-axis. First we need to ensure that $\sin\psi\not=0$. In case  that  $\sin\psi=0$ identically, then $z=z(s)$ is a constant function. This implies that $\gamma$ is a horizontal line and thus $\Sigma$ is a horizontal plane. Since $\Sigma$ is stationary, this plane is the plane of equation $z=0$. In particular, this surface is rotational about the $z$-axis, proving the result in this particular case.
 
Finally, suppose  $\sin\psi\not=0$. Then we write $\gamma$ as the graph of a function $u$ on the $x$-axis by letting  $r=x(s)$, $u=z(s)$ and $u=u(r)$. Then we have   $u'=\cot\psi$. We will prove that $\Sigma$ is a sphere with $0\in\Sigma$.  Equation \eqref{ss2} is 
  $$2ruu'-u^2+r^2+q_1^2=0.$$ 
 The solution of this ODE is 
\begin{equation}\label{u2}
u(r)=\sqrt{q_1^2-r^2+rc},\quad c\in\r.
\end{equation}
Therefore, the generating curve of $\Sigma$ is $\gamma(r)=(q_1+u(r),0,r)$. It follows that $\gamma$ is a circle  contained in the $(x,z)$-plane of center $(q_1,\frac{c}{2})$ of radius $\frac{\sqrt{c^2+4q_1^2}}{2}$. In consequence, $\Sigma$ is a sphere containing $0$. This proves the result. If we follow further, we have  
 $$u''=-\frac{\psi'}{\sin^3\psi}.$$
 Since $\sin^2\psi=\frac{1}{1+u'^2}$, equation \eqref{s1} writes as 
 $$\frac{u''}{1+u'^2}=\frac{2+\alpha}{2u}.$$
Substituting   the   value of $u(r)$ given in \eqref{u2}, we arrive to 
$(4+\alpha)(4q_1^2+c^2)=0$. This implies $\alpha=-4$ because $4q_1^2+c^2>0$.  
\end{proof}

\begin{remark}   In case that the rotation axis does not cross the origin, then $\alpha=-4$ and $\Sigma$ is a sphere. We will see in Sect. \ref{s5} that  there are non-spherical stationary   surfaces for $\alpha=-4$ which are axisymmetric about the $z$-axis.
\end{remark}

\section{Axisymmetric stationary surfaces intersecting the rotation axis}\label{s4}

In this section we prove the existence of axisymmetric stationary surfaces intersecting orthogonally the rotation axis. 
Let $\Sigma$ be an axisymmetric stationary  surface. By Prop. \ref{pr2}, we know that the rotation axis crosses $0$. After a linear isometry of $\r^3$ (Prop. \ref{pr0}) we can assume that the rotation axis is the $z$-axis. 
A parametrization of $\Sigma$ is 
$$\Phi(s,t)=( x(s)\cos t, x(s)\sin t,z(s)),\quad s\in I\subset\r, t\in\r,$$
where $\gamma(s)=(x(s),0,z(s))$ is the generating curve and $x'^2+z'^2=1$.   Computing again \eqref{eq1} or equivalently, considering the equation $A_0=0$ in \eqref{u3}, we have  
  \begin{equation}\label{eq20}
  \psi'+\frac{\sin\psi}{x} = \alpha\frac{  z\cos\psi-x\sin\psi}{x^2+z^2}.
  \end{equation}
 It is natural to expect the existence of solutions intersecting orthogonally the rotation axis. However,  equation   \eqref{eq20} is singular at $x=0$ because of the left hand-side, hence  standard arguments   do not assure existence of solutions. 
 
 The method that we use to prove the existence of these solutions is the fixed point theorem. Since we require that the intersection of $\gamma$ is orthogonal to the $z$-axis, we can assume that $\gamma$ is a graph on the $x$-axis. Thus we change the parametrization of the generating curve $\gamma$ by  $\gamma(r)=(r,0,u(r))$, where $u=u(r)$ is a smooth function  defined in an subinterval of $(0,\infty)$.  Now Eq.  \eqref{eq20} writes as
 \begin{equation}\label{eq22}
 \frac{u''}{(1+u'^2)^{3/2}}+\frac{u'}{r\sqrt{1+u'^2}}=  \frac{\alpha(u-ru')}{(r^2+u^2)\sqrt{1+u'^2}}.
 \end{equation}
Multiplying by $r$, this equation becomes
$$ \left(  \dfrac{r u'(r)}{\sqrt{1+u'(r)^2}}\right)'=\alpha\frac{r(u-ru')}{(r^2+u^2) \sqrt{1+u'^2}}.$$
This equation is singular at $r=0$.  Let $r_0\geq 0$ and consider the initial value problem
\begin{equation}\label{rot-r}
\left\{\begin{array}{ll}
\left(  \dfrac{r u'(r)}{\sqrt{1+u'(r)^2}}\right)'=\alpha\dfrac{r(u-ru')}{(r^2+u^2)\sqrt{1+u'^2}} ,&\mbox{ in $(r_0,r_0+\delta)$}\\
u(r_0)=u_0& \\
  u'(r_0)=0,&
\end{array}\right.
\end{equation}
where $u_0>0$. The following theorem proves the existence of axisymmetric stationary surfaces intersecting orthogonally the rotation axis. 

\begin{theorem}\label{t1}
For any $u_0>0$ the initial value problem (\ref{rot-r}) with $r_0=0$ has a solution $u\in C^2([0,R])$ for some $R>0$. Moreover, the solution depends continuously on the parameters $\alpha$ and $u_0$.
\end{theorem}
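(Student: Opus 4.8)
The plan is to recast the singular initial value problem \eqref{rot-r} as a fixed point equation for an integral operator and then apply the Banach fixed point theorem on a small ball of $C^1([0,R])$. Since the left hand-side of \eqref{rot-r} is already in divergence form and $u'(0)=0$ forces $\frac{ru'(r)}{\sqrt{1+u'(r)^2}}$ to vanish at $r=0$, integrating from $0$ to $r$ gives
$$\frac{ru'(r)}{\sqrt{1+u'(r)^2}}=G[u](r):=\alpha\int_0^r\frac{t\,(u(t)-t\,u'(t))}{(t^2+u(t)^2)\sqrt{1+u'(t)^2}}\,dt.$$
This identity can be solved algebraically for $u'$: writing $\phi(G,r)=G/\sqrt{r^2-G^2}$ whenever $|G|<r$, one gets $u'(r)=\phi(G[u](r),r)$ and then $u(r)=u_0+\int_0^r\phi(G[u](t),t)\,dt$. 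I would take this last formula as the definition of an operator $T$, whose fixed points are exactly the $C^1$ solutions of \eqref{rot-r}.

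Next I would fix a constant $0<M<u_0$ and work on the complete metric space $X=\{u\in C^1([0,R]):u(0)=u_0,\ u'(0)=0,\ \|u-u_0\|_{C^1}\le M\}$ endowed with the $C^1$-norm (translating $u=u_0+w$ if a fixed domain is preferred). The crucial elementary estimate is that for $u\in X$ the integrand of $G[u]$ is bounded on $[0,R]$, since its denominator stays $\ge(u_0-M)^2>0$; the factor $t$ then yields $|G[u](r)|\le C r^2$ with $C=C(\alpha,u_0,M)$, so in particular $|G[u](r)|\le r/2$ once $R$ is small. This makes $\phi(G[u](r),r)$ well defined, continuous, and of order $O(r)$ near the origin, whence $Tu\in C^1([0,R])$ with $(Tu)(0)=u_0$ and $(Tu)'(0)=0$; estimating $\|Tu-u_0\|_{C^1}\le C' R$ then gives $T(X)\subset X$ for $R$ small.

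The heart of the argument is the contraction estimate, and this is the step I expect to be the main obstacle, precisely because of the singularity at $r=0$. On the domain defining $X$ the integrand of $G$ is Lipschitz in $(u,u')$ with a uniform constant, and the extra factor $t$ produces $|G[u_1](r)-G[u_2](r)|\le C_1 r^2\,\|u_1-u_2\|_{C^1}$. On the other hand $\partial_G\phi(G,r)=r^2/(r^2-G^2)^{3/2}$, which for $|G|\le r/2$ is only of order $O(1/r)$ and blows up at the origin. The point is that these two effects compensate exactly: by the mean value theorem
$$|(Tu_1)'(r)-(Tu_2)'(r)|\le\sup_{|G|\le r/2}|\partial_G\phi(G,r)|\;|G[u_1](r)-G[u_2](r)|\le C_2\, r\,\|u_1-u_2\|_{C^1},$$
and integrating gives $\|Tu_1-Tu_2\|_{C^1}\le C_3 R\,\|u_1-u_2\|_{C^1}$. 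Choosing $R$ small enough that $C_3 R<1$ makes $T$ a contraction, and the Banach fixed point theorem produces a unique $u\in X$ solving the integral equation, hence \eqref{rot-r}.

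Finally I would upgrade the regularity and treat the dependence on parameters. Since $G[u]\in C^1([0,R])$, with derivative equal to the continuous integrand, and $\phi$ is smooth where $|G|<r$, the identity $u'=\phi(G[u],\,\cdot\,)$ shows $u'\in C^1$, so $u\in C^2([0,R])$; reading \eqref{eq22} in the limit $r\to0$ identifies $u''(0)=\alpha/(2u_0)$ and confirms $C^2$-regularity up to the endpoint. For the continuous dependence, after the translation $u=u_0+w$ the operator $T=T_{\alpha,u_0}$ acts on a fixed ball and depends continuously (indeed linearly in $\alpha$ and smoothly in $u_0$) on the parameters. One can then arrange a common contraction constant and a common $R$ over a neighborhood of any fixed $(\alpha,u_0)$, so the uniform contraction principle yields continuous dependence of the fixed point, and hence of the solution, on $(\alpha,u_0)$.
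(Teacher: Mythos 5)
Your proposal is correct and follows essentially the same route as the paper: both integrate the divergence-form equation twice, recast the problem as a fixed point of the resulting integral operator on a small closed ball of $C^1([0,R])$, apply the Banach fixed point theorem, and recover $C^2$-regularity at $r=0$ via L'H\^{o}pital; indeed your $\phi(G,r)=G/\sqrt{r^2-G^2}$ is precisely the paper's $f^{-1}(G/r)$ with $f(y)=y/\sqrt{1+y^2}$, so your ``$1/r$ versus $r^2$'' compensation is the same estimate that the paper organizes by feeding the normalized average $\frac{1}{s}\int_0^s t\,g\,dt$ into the uniformly Lipschitz $f^{-1}$. The only difference is that you spell out the continuous dependence on $(\alpha,u_0)$ via the uniform contraction principle, which the paper's proof leaves implicit.
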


\begin{proof} Let $\r_0^+=\{r\in\r\colon r\geq 0\}$. Define the functions  $f:\r\rightarrow\r$ and $g:\r_0^+\times\r^2\rightarrow\r$  by
$$f(y)=\frac{y}{\sqrt{1+y^2}},\qquad g(x,y,z)=\frac{\alpha(y-xz)}{(x^2+y^2)\sqrt{1+z^2}}.$$
It is clear that a function $u\in C^2([0,R])$, for some $R>0$, is a solution of (\ref{rot-r}) if and only if
\begin{equation*}\left\{
\begin{split}
(r f(u'))'&=r g(r,u,u')\\
u(0)&=u_0\\
 u'(0)&=0.
\end{split}\right.
\end{equation*}
 By means of the inverse $f^{-1}$ and integrating the first equation, the function  $u$ is given by 
 $$u(r)=u_0+\int_0^rf^{-1}\left(\frac{1}{s}\int_0^s tg(t,u,u')\, dt\right)\, ds.$$
 Fix $R>0$ to be determined later. For $u\in C^1([0,R])$,  define the operator $\mathsf{T}$ by
$$
( \mathsf{T} u)(r)=u_0+\int_0^r f^{-1}\left( \int_0^s\frac{t}{s}g(t,u,u')\, dt\right)\, ds.
$$
It is clear that a fixed point $u$ of  $ \mathsf{T}$, $\mathsf{T}u=u$, is a solution of the initial value problem (\ref{rot-r}). The existence of the fixed point will be proved by using the Banach fixed point theorem. Let $C^1([0,R])$ endowed with  the norm $\|u\|=\|u\|_\infty+\|u'\|_\infty$. We will prove that $\mathsf{T}$ is a contraction mapping in a  closed ball about  $u_0$,  $\overline{\mathcal{B}(u_0,\epsilon)}\subset C^1([0,R])$ for some $\epsilon>0$.

We consider the next three steps.

\begin{enumerate}
\item Prove that     $ \mathsf{T}$ is well-defined. Since 
 $$f^{-1}\colon (-1,1)\to \r,\quad  f^{-1}(x)=\frac{x}{\sqrt{1-x^2}},$$ 
we need to check  
$$ \left|\int_0^s\frac{t}{s}g(t,u,u')\, dt\right|<1,\mbox{ for all $r\in [0,R]$}.$$
Let $\epsilon<\{1,u_0\}$. Consider the functions $f^{-1}$ and $g$ defined in $[-\epsilon,\epsilon]$ and  $[-\epsilon,\epsilon]\times [u_0-\epsilon,u_0+\epsilon]\times[-\epsilon,\epsilon]$, respectively. Let $M>0$ denote an upper bound of $g$ obtained as follows:
$$|g|\leq \frac{|\alpha|}{(u_0-\epsilon)^2}(u_0+\epsilon+\epsilon^2)\leq \frac{|\alpha|(u_0+2)}{(u_0-\epsilon)^2}:=M.$$
Let $R$ be a positive number such that $R<\frac{1}{M}$. For all $s\in [0,R]$, we have
$$\left| \int_0^s\frac{t}{s}g(t,u,u')\, dt\right|\leq \frac{Ms}{2}\leq \frac{MR}{2}<\frac{1}{2},$$
 which leads  that $T$ is well-defined.
 
\item Prove  the existence of $\epsilon>0$ such  that  $ \mathsf{T}(\overline{\mathcal{B}(u_0,\epsilon)})\subset \overline{\mathcal{B}(u_0,\epsilon)}$.  First, we change the value of $R$ by another one satisfying
\begin{equation}\label{r1}
R<\min\{\frac{1}{M},\frac{\sqrt{3}}{2}\epsilon,\frac{2 \epsilon}{M\sqrt{4+\epsilon^2}}\}.
\end{equation}
 Since $ f^{-1}$ is an increasing function

\begin{equation*}
|( \mathsf{T} u)(r)-u_0|\leq \int_0^r f^{-1}\left(\int_0^s\frac{t}{s} M\, dt\right)\, ds 
\leq \int_0^r f^{-1}\left(\frac{Ms}{2}\right)\, ds\leq  \frac{R}{\sqrt{3}}.
\end{equation*}
By using \eqref{r1}, we have
$|( \mathsf{T} u)(r)-u_0|\leq  \epsilon/2$. Thus $\| \mathsf{T} u-u_0\|_\infty\leq\frac{\epsilon}{2}$. Similarly, we have
\begin{equation*}
\begin{split}|( \mathsf{T} u)'-(u_0)'(r)|&\leq  f^{-1}\left(\left|\int_0^r\frac{t}{r}g(t,u,u')\, dt\right|\right)\leq  \left| f^{-1}\left(\frac{M}{2}r\right)\right|\\
&\leq \frac{MR}{\sqrt{4-M^2R^2}}\leq\frac{\epsilon}{2}
\end{split}
\end{equation*}
because \eqref{r1} again. Then $\|( \mathsf{T} u-u_0)'\|_\infty\leq  \epsilon/2$.
Definitively, we have proved $\|  ( \mathsf{T} u-u_0)\|\leq \epsilon$. 
\item Prove  that  $ \mathsf{T}\colon \overline{\mathcal{B}(u_0,\epsilon)}\to \overline{\mathcal{B}(u_0,\epsilon)}$ is a contraction.
 Denote $L_{ f^{-1}}$ and $L_g$ the Lipschitz constants of $ f^{-1}$ and $g$ in their respective domains $[-\epsilon,\epsilon]$ and $[-\epsilon,\epsilon]\times[u_0-\epsilon,u_0+\epsilon]\times[-\epsilon,\epsilon]$, respectively. For all $u,w\in C^1([0,R])$, we have
$$\| \mathsf{T} u- \mathsf{T}w\|=\| \mathsf{T} u- \mathsf{T}w\|_\infty+\|( \mathsf{T} u)'-( \mathsf{T}w)'\|_\infty,$$
Consider the first term $\| \mathsf{T} u- \mathsf{T}w\|_\infty$. Given $u,w\in \overline{\mathcal{B}(u_0,\epsilon)}$ and  $r\in [0,R]$, we have
\begin{equation}\label{c1}
\begin{split}
|( \mathsf{T} u)(r)-( \mathsf{T}w)(r)|&\leq L_{ f^{-1}} \left|\int_0^r \left(\int_0^r\frac{t}{s}(g(t,u,u')-g(t,w,w'))\, dt\right) \, ds\right|\\
 & \leq L_{ f^{-1}}L_g(\|u-w\|_\infty+\|u'-w'\|_\infty)\int_0^r(\int_0^s\frac{t}{s}\, dt)\, ds\\
 &= L_{ f^{-1}}L_g \frac{r^2}{4} \|u-w\| \leq L_{ f^{-1}}L_g \frac{R^2}{4} \|u-w\| .
\end{split}
\end{equation}
For the term   $\|( \mathsf{T} u)'-( \mathsf{T}w)'\|_\infty$, the argument is similar. Indeed,  
\begin{equation}\label{c2}
\begin{split}
|( \mathsf{T} u)'(r)-( \mathsf{T}w)'(r)| &\leq \left| f^{-1}\left(\int_0^r \frac{t}{r}(g(u,u')-g(w,w'))\, dt\right)\right|\\
&\leq  L_{ f^{-1}}L_g  \|u-w\|   \int_0^r  \frac{t}{r}\, dt= L_{ f^{-1}}L_g \frac{r}{2} \|u-w\|  \\
&\leq  L_{ f^{-1}}L_g   \frac{R}{2}\|u-w\|.
 \end{split}
\end{equation}
Let us change $R$ by the condition \eqref{r1} together  
$$R\leq \min\{\frac{1}{\sqrt{ L_{ f^{-1}}L_g}},\frac{1}{  2L_{ f^{-1}}L_g}\}.$$
 Then we find from \eqref{c1} and \eqref{c2}, $\|\mathsf{T} u-\mathsf{T}w\|_\infty\leq\frac14\|u-w\|$ and $\|(\mathsf{T} u)'-(\mathsf{T}w)'\|_\infty\leq\frac14\|u-w\|$, respectively. This gives
 $$\| Tu-Tw\|\leq \frac12\|u-w\|.$$
 This proves that $\mathsf{T}$ is a contraction.

\end{enumerate}
 
It remains to prove that the solution $u$ extends with $C^2$-regularity at $r=0$. Let $r\to 0$ in Eq. \eqref{eq22}. By L'H\^{o}pital rule, we have 
\begin{equation}\label{ii}
2u''(0)=\frac{\alpha}{u_0},
\end{equation}
hence $u''(0)=\alpha/(2u_0)$.  
\end{proof}

In Thm. \ref{t1}, we have proved existence of radial solutions of \eqref{rot-r} in   discs $B_R(0)\subset\r^2$ for sufficiently small radius $R$. By the way, we have proved regularity of solutions of \eqref{rot-r} in case that the solution  $u=u(r)$ arrives until $r=0$ under the condition $u'(0)=0$. Notice that the solutions of \eqref{rot-r} with initial conditions at $r_0>0$ may not extend until $r=0$. This is the case for $\alpha>-2$. In Fig. \ref{fig111}, we show the solutions of \eqref{rot-r} when $r_0=1$. The initial conditions are $u(1)=1$ and $u'(1)=0$ and the solutions cannot extend until $r=0$.  

\begin{figure}[hbtp]
\begin{center}
\includegraphics[width=.45\textwidth]{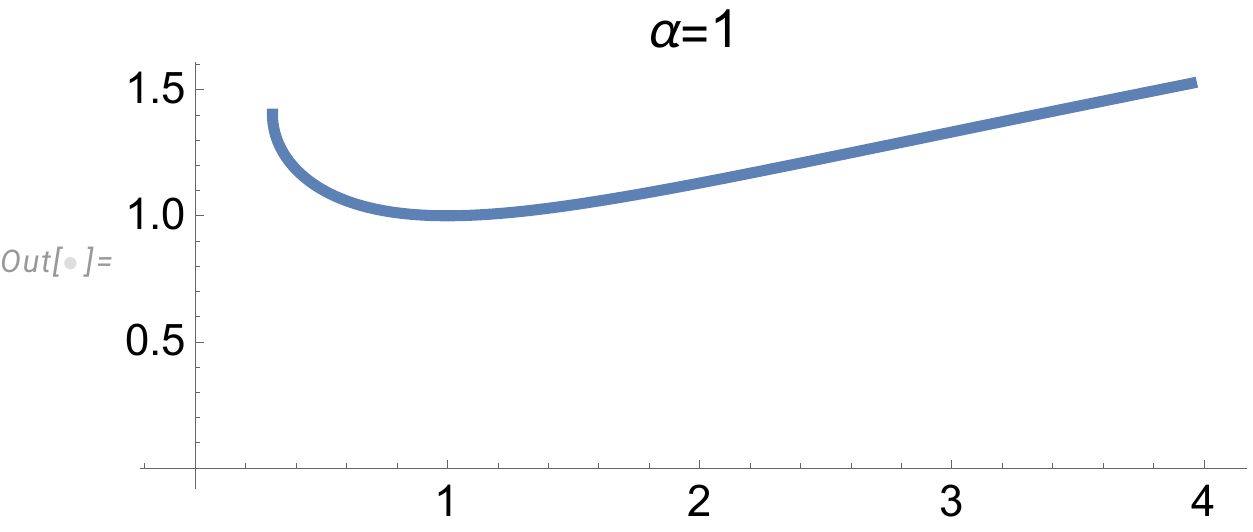} \quad \includegraphics[width=.5\textwidth]{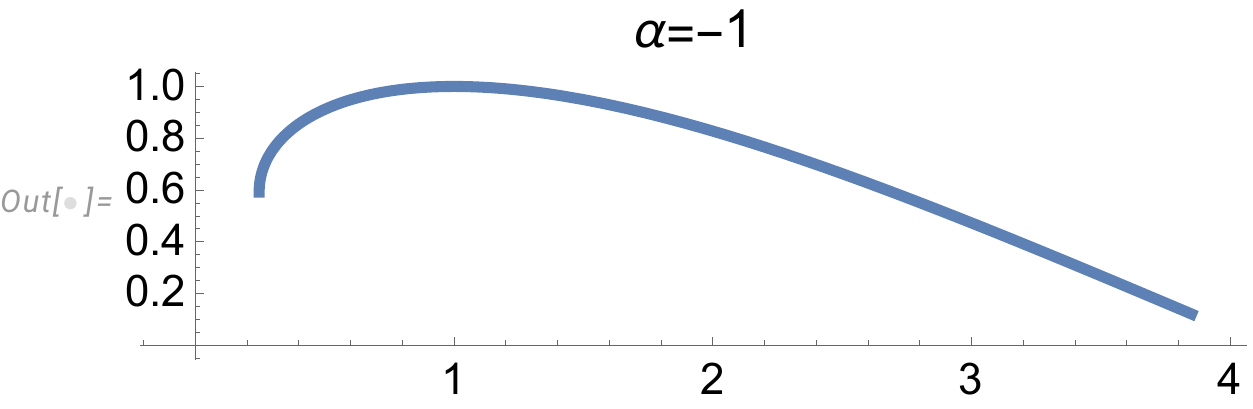} 
\end{center}
\caption{Two solutions of \eqref{rot-r} which cannot extend to $r=0$. The initial conditions are $r_0=u_0=1$ and $\alpha=1$ (left) and $\alpha=-1$ (right). }\label{fig111}
\end{figure}

In case that a solution of \eqref{rot-r} with $r_0>0$ can be extended to $r=0$,   regularity at $r=0$ is not assured.  In  the following theorem we address the problem of    removable singularities for Eq. \eqref{eq1}. Here we follow the general theory  for solutions of mean curvature equation type. The typical example is   the minimal surface equation when  the singularity consists in a single point. It is well known that under this situation, the  singular point can be removed \cite{be,se}.

We use the   notation $q=(x,y)\in\r^2$ for points of $\r^2$ and  $(\cdot)$ for the Euclidean product of $\r^2$. First, we write Eq. \eqref{eq1} in nonparametric way
\begin{equation}\label{eq-div}
\mbox{div}\left(\frac{Du}{\sqrt{1+|Du|^2}}\right)=\alpha\frac{u-(q\cdot Du)}{(x^2+u^2)\sqrt{1+|Du|^2}},
\end{equation}
where the graph of $u=u(x,y)$  is an $\alpha$-stationary surface. 
\begin{theorem}\label{t-remove}
 Let $u$ be a $C^2$-solution    of \eqref{eq-div} defined in a punctured disk $B_r(0)-\{0\}$  which is Lipschitz continuous on $B_r(0)$.    Then $u$ is analytic on $B_r(0)$. 
\end{theorem}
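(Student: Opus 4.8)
The plan is to treat equation \eqref{eq-div} as a quasilinear second-order elliptic equation and remove the point singularity by a standard capacity/approximation argument. Write the equation in divergence form as $\mbox{div}\,\mathbf{A}(q,u,Du)=B(q,u,Du)$, where $\mathbf{A}(q,u,p)=p/\sqrt{1+|p|^2}$ and $B(q,u,p)=\alpha(u-q\cdot p)/((x^2+u^2)\sqrt{1+|p|^2})$. The key structural feature is that $u$ is assumed Lipschitz on all of $B_r(0)$; hence $Du\in L^\infty$, so the coefficient vector $\mathbf{A}$ is bounded, the ratio $|\mathbf{A}|$ stays away from the degenerate regime, and the principal part is uniformly elliptic on $B_r(0)$. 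Moreover, since $0\notin\Sigma$ we may assume $u(0)\neq 0$ (the origin is excluded from the surface), so near the singular point the denominator $x^2+u^2$ is bounded below and $B$ is bounded. Thus both $\mathbf{A}$ and $B$ are bounded measurable on $B_r(0)$ and smooth away from $0$.

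First I would show that $u$ is a \emph{weak} solution of \eqref{eq-div} across the origin, i.e. that the single point carries no distributional mass. Fix a test function $\zeta\in C_c^\infty(B_r(0))$ and a logarithmic cutoff $\eta_\delta$ that vanishes on $B_\delta(0)$, equals $1$ outside $B_{\sqrt{\delta}}(0)$, and satisfies $\int_{B_r(0)}|D\eta_\delta|^2\,dq\to 0$ as $\delta\to 0$ (this is exactly the statement that a point has zero $2$-capacity in the plane). Testing the classical identity on $B_r(0)-\{0\}$ against $\zeta\eta_\delta$ gives
\begin{equation*}
\int_{B_r(0)}\bigl(\mathbf{A}(q,u,Du)\cdot D(\zeta\eta_\delta)+B(q,u,Du)\,\zeta\eta_\delta\bigr)\,dq=0.
\end{equation*}
Since $\mathbf{A}$ and $B$ are bounded and $\zeta$ is smooth, the terms containing $D\eta_\delta$ are controlled by $\|\mathbf{A}\|_\infty\,\|\zeta\|_\infty\int|D\eta_\delta|\,dq$, which tends to $0$ by Cauchy--Schwarz and the capacity estimate, while the remaining terms converge by dominated convergence to the integral with $\zeta$ in place of $\zeta\eta_\delta$. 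Letting $\delta\to 0$ shows that $u$ is a bounded Lipschitz weak solution on the full disk $B_r(0)$.

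Next I would bootstrap regularity. Because $u$ is Lipschitz, the linearized operator obtained by freezing $Du\in L^\infty$ has bounded measurable coefficients and a bounded right-hand side, so De Giorgi--Nash--Moser theory yields $u\in C^{1,\beta}(B_r(0))$ for some $\beta\in(0,1)$. With $Du$ now Hölder continuous, $\mathbf{A}$ and $B$ become Hölder-continuous coefficients and the equation is a genuine quasilinear elliptic PDE with Hölder data; Schauder estimates then give $u\in C^{2,\beta}$, and a standard iteration promotes this to $C^\infty$. Finally, since the coefficients $\mathbf{A}$ and $B$ are real-analytic functions of $(q,u,Du)$ away from the excluded locus $x^2+u^2=0$, the analytic regularity theorem for solutions of elliptic equations with analytic coefficients (Morrey--Nirenberg) upgrades $u$ to real-analytic on $B_r(0)$, as claimed. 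I would invoke the classical references for the minimal-surface case (\cite{be,se}), noting that the only modification here is the lower-order term $B$, which is harmless precisely because $0\notin\Sigma$ keeps $x^2+u^2$ bounded away from zero.

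The main obstacle is the first step: justifying that the point singularity is genuinely removable in the weak sense rather than merely that $u$ extends continuously. The Lipschitz hypothesis is doing the essential work — it guarantees uniform ellipticity and bounded coefficients up to the singular point, so that the zero-capacity estimate for a point in $\r^2$ can be applied. Once weak solvability across $0$ is established, the remaining regularity steps are the standard elliptic machinery and present no real difficulty.
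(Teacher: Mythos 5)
Your proposal is correct and follows essentially the same argument as the paper: you show the puncture carries no distributional mass by testing against cutoffs whose gradients vanish in integral norm (the paper postulates cutoffs with $\|D\eta_n\|_{L^1(B_r)}\to 0$ directly, while you reach the same conclusion via the logarithmic cutoff, the zero $2$-capacity of a point, and Cauchy--Schwarz), conclude that $u$ is a Lipschitz weak solution of \eqref{eq-div} on all of $B_r(0)$, and then invoke elliptic regularity to get analyticity. The only differences are cosmetic: you spell out the De Giorgi--Nash--Moser/Schauder/Morrey--Nirenberg bootstrap that the paper compresses into ``by elliptic regularity theory,'' and you make explicit the condition $u(0)\neq 0$ (implicit in the paper's standing assumption $0\notin\Sigma$) that keeps the right-hand side bounded near the origin.
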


\begin{proof} 
We prove that $u$ is a weak Lipschitz-solution of \eqref{eq-div} on $B_r(0)$. Choose a sequence $\{\eta_n\}\in C_c^\infty(B_r(0))$ with the following properties:
\begin{enumerate} 
\item  $\mbox{supp}(\eta_n)\subset B_r(0)-\{0\}$, in particular, $\eta_n= 0$ identically in  a small neighborhood around $0$;
\item $0\leq \eta_n\leq 1$ and $\eta_n\to 1$ a.e. on $B_r(0)$;
\item $||D\eta_n||_{1,B_r}=\int_{B_r(0)}|D\eta_n|\, dxdy\to 0$ as $n\to\infty$.
\end{enumerate}
 Let $\varphi\in C_c^\infty(B_r(0))$ be arbitrary and put $\phi_n=\varphi\eta_n$, $n\in\mathbb{N}$. Then $\phi_n\in C_c^\infty(B_r(0)-\{0\})$
 and \eqref{eq-div} yields the relation 
 \begin{equation}\label{eq-div2}
 \int_{B_r(0)}\frac{Du\cdot D\phi_n}{\sqrt{1+|Du|^2}}\, dxdy=\alpha\int_{B_r(0)}\frac{((q\cdot Du)-u)\phi_n}{(x^2+u^2)\sqrt{1+|Du|^2}}\, dx dy.
 \end{equation}
 Observing that $D\phi_n=\eta_nD\varphi+\varphi D\eta_n$, we conclude from \eqref{eq-div2} and (2), (3) the relation
 $$\int_{B_r}\frac{Du\cdot D\varphi}{\sqrt{1+|Du|^2}}\, dx dy=\alpha\int_{B_r}\frac{(q\cdot Du)-u}{(x^2+u^2)\sqrt{1+|Du|^2}}\varphi\, dx dy.$$
 This means that   $u$   is a Lipschitz continuous solution of \eqref{eq-div} which, in addition, is of class $C^2$ on the punctured disc $B_r(0)-\{0\}$. By elliptic regularity theory,  this implies that $u$ is of class $C^2$ in $B_r(0)$ and hence analytic in $B_r(0)$. 

\end{proof}

As a consequence of this theorem, if a radial solution $u$ of \eqref{eq20} can be extended until $r=0$ and intersects the $z$-axis non-tangentially,  then this solution is analytic in the disk $B_{r_0}(0)$ and consequently, the intersection with the rotation axis must be orthogonal. In case that  $\gamma$ intersects the $z$-axis tangentially, one can show that the rotational graph $u$ of $\gamma$ is a weak solution of \eqref{eq-div} which is of class $  C^2(B_r(0)-\{0\})\cap H_1^1(B_r(0))$ \cite{d5}. 
 
\section{Geometry properties of axisymmetric stationary surfaces}\label{s5}
 In this section we give a classification  of the axisymmetric solutions of Eq. \eqref{eq1} when the solution intersects orthogonally the rotation axis,  obtaining their main geometric properties.     Suppose that the generating curve $\gamma$ of the surface is parametrized by arc-length, $\gamma(s)=(x(s),0,z(s))$, $x'^2+z'^2=1$. By using \eqref{eq20},  the  ODE system that determines the generating curve $\gamma$ is 
  \begin{equation}\label{eq2}\left\{
 \begin{split}
 x'(s)&=\cos\psi(s),\\
z'(s)&=\sin\psi(s),\\
 \psi'(s) &= \alpha\frac{z(s)\cos\psi(s)-x(s)\sin\psi(s)}{x(s)^2+z(s)^2}-\frac{\sin\psi(s)}{x(s)}.
 \end{split}\right.
 \end{equation}
The initial conditions are  $x(0)=0$ and $\psi(0)=0$ (orthogonal condition). By Prop. \ref{pr0}, after a vector isometry we can suppose that $z(0)>0$, and after a dilation, let   $z(0)=1$.  Definitively, consider initial values at $s=0$  
 \begin{equation}\label{ini}
 \left\{
 \begin{split}
 (x(0),z(0))&=(0,1),\\
 \psi(0)&=0.
 \end{split}\right.
 \end{equation}
 
In many of the next arguments, we will analyze the phase plane of an autonomous system. For this, it is convenient to introduce the polar angle $\theta$ determined by the condition
 \begin{equation}\label{polar}
 \tan\theta=\frac{z}{x}.
 \end{equation}
By using the first two equations of \eqref{eq2}, we have 
 $$\theta'=\cos^2\theta\frac{\sin\psi-\tan\theta\cos\psi}{x}=\frac{\cos\theta}{x}\sin(\psi-\theta).$$
 On the other hand, the third equation in \eqref{eq2} can be written as
\begin{equation*}
\psi' =
 \alpha\frac{\tan\theta\cos\psi-\sin\psi}{x(1+\tan^2\theta)}-\frac{\sin\psi}{x}=-\frac{\alpha\cos\theta \sin(\psi-\theta)+\sin\psi}{x}.
\end{equation*}
Then 
\begin{equation*}
\frac{d\psi}{d\theta}=\frac{\psi'(s)}{ \theta'(s)}=-  \frac{\alpha \cos \theta \sin(\psi-\theta) +\sin\psi}{ \cos \theta \sin(\psi-\theta) }.
\end{equation*}
This gives the planar autonomous ordinary system
\begin{equation}\label{eq3}
\left\{
\begin{split}
\frac{d\psi}{dt}&:=h_1(\psi,\theta)= -\sin\psi-\alpha \cos \theta \sin(\psi-\theta),\\
\frac{d\theta}{dt}&:=h_2(\psi,\theta)=\cos \theta \sin(\psi-\theta).
\end{split}\right.
\end{equation}
We study this system according to the Bendixson-Poincar\'e theory.  From the equation $h_2(\psi,\theta)=0$, the equilibrium points of the  system \eqref{eq3} are of type $(n\pi,k\pi)$ and $(n\pi,\frac{\pi}{2}+k\pi)$, with $n,k\in\mathbb{Z}$. We need to distinguish if $n$ is even or not. Thus, the equilibrium points $(\psi,\theta)$ are of  the following three classes of points: 
\begin{equation*}
\begin{split}
 P_1&=(2n\pi,k \pi),\\
 P_2&=((2n-1)\pi,k\pi),\\
 P_3&=(n\pi,\frac{\pi}{2}+k\pi),\\
 \end{split}
\end{equation*}
where $n,k\in\mathbb{Z}$. We study the linearized system at a neighbourhood of the equilibrium points. At the points $P_1$ we have
\begin{equation}\label{l1}
\frac{\partial(h_1,h_2)}{\partial(\psi,\theta)}(P_1)=\begin{pmatrix}-1-\alpha&\alpha\\ 1&-1\end{pmatrix}.
\end{equation}
According to the eigenvalues, we have the  following possibilites. 
\begin{enumerate}
\item If $\alpha>0$, the two eigenvalues are negative  and $P_1$ are stable nodes.
\item If $\alpha\in (-2,0)$,  the eigenvalues are two complex numbers with negative real parts. Then $P_1$ are   stable spirals.
\item If $\alpha=-2$, then the eigenvalues are $\pm i$. Then $P_1$ is a center.
\item If $\alpha\in (-4,-2)$, the eigenvalues are two complex numbers with positive   real parts. Then $P_1$ are   unstable spirals.
 \item If $\alpha\leq -4$, then the eigenvalues are  two positive real numbers. Thus $P_1$ are  unstable nodes.
\end{enumerate}
For the points $P_2$, we have
\begin{equation}\label{l2}
\frac{\partial(h_1,h_2)}{\partial(\psi,\theta)}(P_2)=\begin{pmatrix}1+\alpha&-\alpha\\ -1&1\end{pmatrix}.
\end{equation}
The type of equilibrium points are: 
\begin{enumerate}
\item If $\alpha>0$, the two eigenvalues are positive   and    $P_2$ are unstable nodes.
\item If $\alpha\in (-2,0)$,  the eigenvalues are two complex numbers where the real parts are positive. Then $P_2$ are   unstable spirals.
\item If $\alpha=-2$, then the eigenvalues are $\pm i$. Then $P_2$ is a center.
\item If $\alpha\in (-4,-2)$, the eigenvalues are two complex numbers with negative   real parts. Then $P_2$ are   stable spirals.
\item If $\alpha\leq -4$, then   the eigenvalues are negative and $P_2$ are stable nodes. 
 \end{enumerate}
 
 The points $P_3$ have the same behaviour regardless of the value of $\alpha$ because
\begin{equation}\label{l3}
\frac{\partial(h_1,h_2)}{\partial(\psi,\theta)}(P_3)=\begin{pmatrix}-1&-\alpha\\ 0&1\end{pmatrix}
\end{equation}
and the eigenvalues are $-1$ and $1$. Hence $P_3$ are   saddle points with stable manifold in the direction of   $(1,0)$ and unstable manifold $V_1$ in the direction of   $(- \alpha ,2)$.

We begin the discussion with the case $\alpha>0$. 

\begin{theorem} \label{t2}
Suppose $\alpha>0$ and let  $\gamma(s)=(x(s),0,z(s))$, $s\in I$, denote a maximal solution   of \eqref{eq2}-\eqref{ini}. Then     $\gamma$ is a graph on the  $x$-axis.  In consequence, the axisymmetric $\alpha$-stationary surface is a entire graph on the plane $z=0$.
\end{theorem}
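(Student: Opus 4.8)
The plan is to translate the statement into the phase plane of the autonomous system \eqref{eq3} and to show that the orbit of $\gamma$ is trapped in the open triangle
$$T=\{(\psi,\theta)\colon 0<\psi<\theta<\pi/2\}.$$
Confinement to $T$ forces $\psi\in(0,\pi/2)$, so that $x'=\cos\psi>0$ and $x$ is strictly increasing; hence $\gamma$ is a graph over the $x$-axis. A separate completeness step, showing that $x$ sweeps out all of $[0,\infty)$, then upgrades this to an entire graph over the plane $z=0$.

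First I would locate the orbit near the axis. By \eqref{ini} we have $x(0)=0$, $z(0)=1$, $\psi(0)=0$, so $\theta=\arctan(z/x)\to\pi/2$ as $s\to 0^+$ and the orbit issues from the saddle $P_3=(0,\pi/2)$. The $C^2$--expansion at the axis from Thm.~\ref{t1}, namely $u''(0)=\alpha/(2u_0)=\alpha/2>0$ (see \eqref{ii}), gives $u'(x)>0$ for small $x>0$, hence $\psi=\arctan u'>0$ there; together with $z\ge 1$ this places $(\psi,\theta)$ in $T$ for small $s>0$, the orbit leaving $P_3$ along the branch of the unstable manifold $V_1$ in the direction $(\alpha,-2)$.

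Next I would prove that $T$ is positively invariant by a first--exit--time argument on its two relevant sides, using the $s$-derivatives $\theta'=\frac{\cos\theta}{x}\sin(\psi-\theta)$ and $\psi'=-\frac{\alpha\cos\theta\sin(\psi-\theta)+\sin\psi}{x}$. On the diagonal $\psi=\theta$ one gets $\theta'=0$ and $\psi'=-\sin\theta/x<0$, so $(\theta-\psi)'=\sin\theta/x>0$ and the flow points back into $\{\psi<\theta\}$. On the side $\psi=0$ one gets $\psi'=\alpha\cos\theta\sin\theta/x>0$ because $\alpha>0$, so the flow points into $\{\psi>0\}$. Inside $T$ one has $\sin(\psi-\theta)<0$, whence $\theta'<0$ (so $\theta$ decreases and stays below $\pi/2$) and $\psi>0$ (so $z'=\sin\psi>0$, $z\ge 1$, and $\theta>0$). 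Since $x'=\cos\psi>0$ keeps $x$ positive and increasing, the orbit can reach neither $\psi=\theta$ nor $\psi=0$, and the remaining two sides are excluded by the monotonicity of $\theta$ and by $z\ge 1$; hence $(\psi,\theta)\in T$ throughout the maximal interval.

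Finally I would carry out the completeness step. Since $x'^2+z'^2=1$, the functions $x,z$ grow at most linearly, and the only singularity of \eqref{eq2} occurs at $x=0$, which is avoided as soon as $x>0$; thus the maximal interval is $[0,\infty)$. Because $\theta$ decreases monotonically to some $\theta_\infty<\pi/2$ while $\psi<\theta$, the angle $\psi$ remains bounded away from $\pi/2$, so $x'=\cos\psi\ge c>0$ for large $s$ and $x(s)\to\infty$; equivalently, $z\ge 1$ together with $\theta\to\theta_\infty<\pi/2$ forces $x\to\infty$. Hence $\gamma$ is a graph over all of $[0,\infty)$ and the rotational surface is an entire graph over $z=0$. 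I expect the main obstacle to be this completeness step---ruling out that $x$ stays bounded, in which case the graph would only be defined on a disc---together with the careful treatment of the singular point $x=0$ where \eqref{eq2} degenerates; once these are settled, the trapping in $T$ reduces to the routine sign computations above.
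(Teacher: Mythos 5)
Your proposal is correct and is essentially the paper's own argument: the paper also works with the phase plane \eqref{eq3}, starts the orbit at the saddle $P_3$ along the unstable branch in the direction $(\alpha,-2)$, and confines it by first-exit sign arguments --- its computation at a first zero of $\psi$ (namely $\psi'=\alpha z/(x^2+z^2)>0$) is exactly your barrier computation on the side $\psi=0$, while instead of your diagonal barrier $\psi=\theta$ it rules out $\psi=\pi/2$ directly from \eqref{eq2}. Your explicit completeness step (global existence of the solution plus $x'=\cos\psi\ge c>0$ from the confinement, hence $x\to\infty$) is if anything more careful than the paper's corresponding step, which infers entireness from the trajectory ending at the stable node $P_1$.
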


\begin{proof}
Since  $\psi'(0)=\frac{\alpha}{2}>0$, the point  $s=0$ is a local minimum of $z$ and  $\psi$ is increasing at   $s=0$.   We claim that  the function $\psi$ cannot increase until to attain the value $\pi/2$. Indeed, if    $s_1$ is the first point with $\psi(s_1)=\pi/2$, then $\psi'(s_1)\geq 0$ but \eqref{eq2}   gives 
$$\psi'(s_1)=-\alpha\frac{x(s_1)}{x(s_2)^2+z(s_1)^2}-\frac{1}{x(s_1)}<0.$$
A similar argument proves that there do not exist $s_1>0$ such that $s\to s_1$ and $\psi(s)\to\pi/2$ while $\lim_{s\to s_1}x(s)<\infty$. 

On the other hand, after $s=0$, the function $\psi$ cannot decrease and attain the value $0$ because if  $s_2>0$ if the first time where $\psi(s_2)=0$, then we have $\psi'(s_2)\leq 0$. However \eqref{eq2} gives 
$$\psi'(s_2)=\alpha\frac{z(s_2)}{x(s_2)^2+z(s_2)^2}>0.$$
This     contradiction proves that the function $\psi$ moves on the interval $(0,\frac{\pi}{2})$. Thus the function $z(s)$ is a graph on the $x$ axis and consequently, $\gamma$ is a graph on the $x$-axis. Moreover, since $s=0$ is the only critical point of $z$,    the point    $s=0$ is a global minimum of $z(s)$. See Fig. \ref{fig5}, right.

In Fig. \ref{fig5}, left, we plot the (black)  trajectory in the phase plane \eqref{eq3} going from  $P_3=(0,\frac{\pi}{2})$ to the point $P_1=(0,0)$. The point $P_3$ is a saddle point and   we know $\psi(s)>0$ and $\theta(s)<\pi/2$ around $s=0$. Thus the trajectory starts in the direction of the unstable manifold $V_1$ and entering in the fourth quadrant in the $(\psi,\theta)$-phase plane with respect to the point $P_3$. The direction of the trajectory is along the direction   $(\alpha,-2)$.  We see that the final point of the trajectory is  $P_1=(0,0)$, which it is stable node.
Let $P_2=(\pi,0)$ and $P_4=(\pi,\frac{\pi}{2})$ the other equilibrium points on the right hand-side of the $(\psi,\theta)$-phase plane. The angle $\varphi$ of the straight line $P_3P_2$  has tangent $\tan\varphi=\frac{\pi}{\pi/2}=2$. Since the unstable direction $V_1$ has tangent $\frac{2}{\alpha}$ and   $\alpha>0$, then $\frac{2}{\alpha}<2=\tan\varphi$.  This proves that  the trajectory corresponding to $\gamma$ lies  situated between the vertical  line $P_3P_1$ and the line $P_3P_2$: see Fig. \ref{fig5}, left. Finally the point $P_1$ is a stable node and the trajectory ends at $P_1$. 
 
\end{proof}
\begin{figure}[hbtp]
\begin{center}
\includegraphics[width=.5\textwidth]{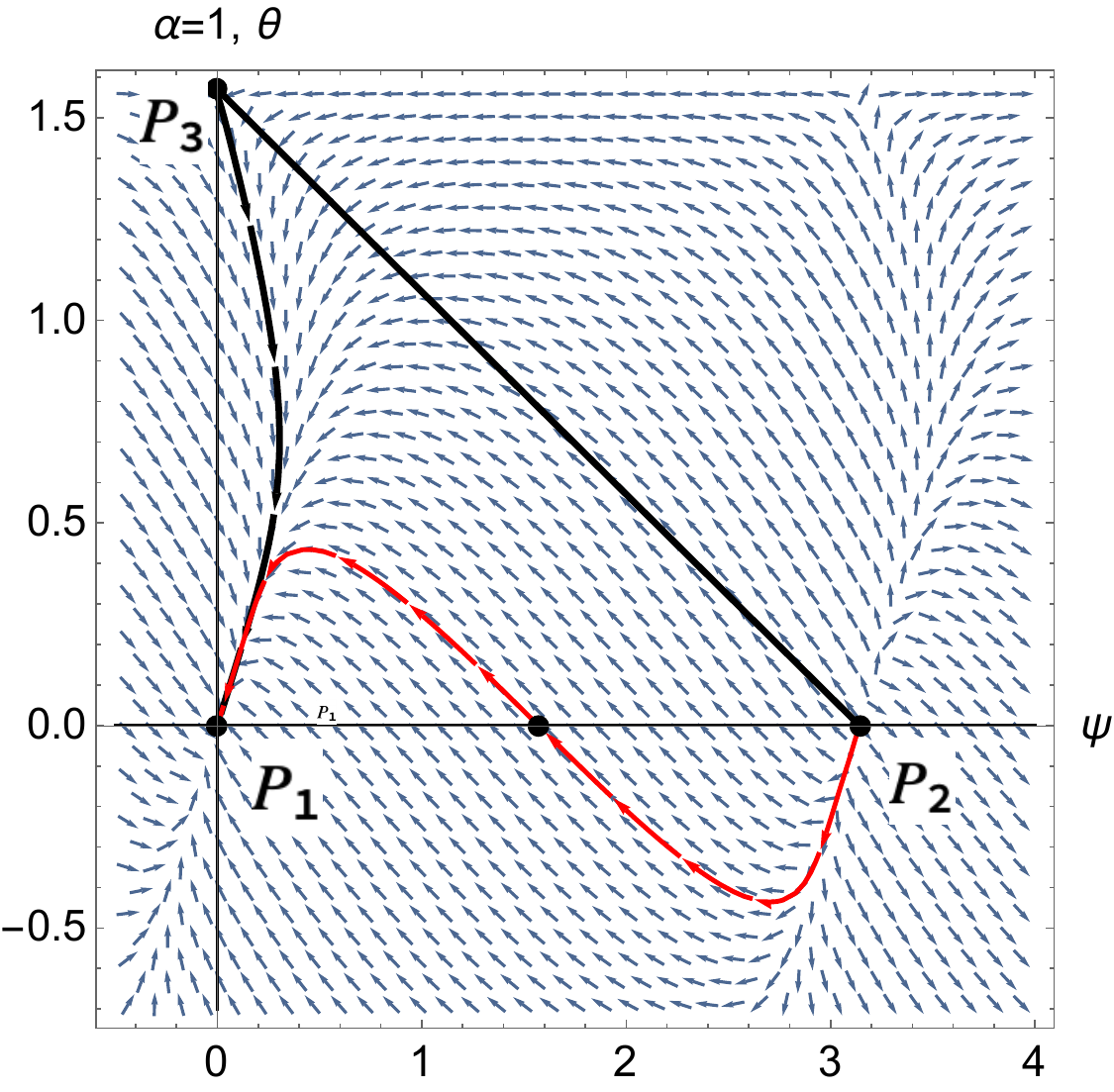} \quad \includegraphics[width=.45\textwidth]{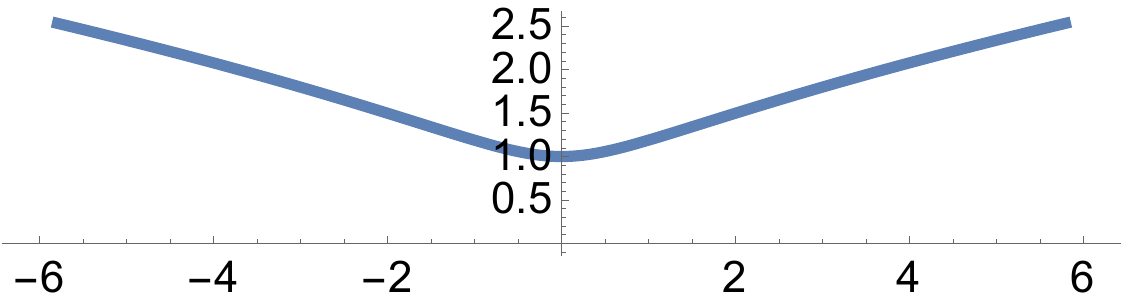} 
\end{center}
\caption{Case $\alpha>0$. Here $\alpha=1$.  The $(\psi,\theta)$-phase plane of \eqref{eq3} (left) for $\alpha=1$. Solutions of Eq. \eqref{rot-r} when   $\gamma$ intersects the $z$-axis.   }\label{fig5}
\end{figure}

We now consider the case $\alpha\in (-2,0)$. 

\begin{theorem} \label{t3}
Suppose $\alpha\in (-2,0)$ and let  $\gamma(s)=(x(s),0,z(s))$, $s\in I$, denote a maximal solution   of \eqref{eq2}-\eqref{ini}. Then   $\gamma$ goes to infinity oscillating along the $x$-axis. Moreover, outside a compact set around $0$, the curve $\gamma$ is a graph on the $x$-axis. The corresponding axisymmetric surface is a graph on the plane $z=0$ outside a compact containing $0$.  
\end{theorem}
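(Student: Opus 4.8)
The plan is to run the same phase-plane analysis of the autonomous system \eqref{eq3} used for Thm.~\ref{t2}, but now exploiting that for $\alpha\in(-2,0)$ the equilibrium $P_1=(0,0)$ is a \emph{stable spiral}. First I would locate the initial data in the $(\psi,\theta)$-plane. The conditions \eqref{ini} give $x(0)=0$, $z(0)=1$, $\psi(0)=0$, i.e. $\theta(0)=\pi/2$, so the trajectory issues from the saddle $P_3=(0,\tfrac{\pi}{2})$. To fix the branch of departure I would use $\psi'(0)=\alpha/2<0$ (from \eqref{ii}) together with $\theta'(0)=-1<0$, obtained from $\theta'=(x\sin\psi-z\cos\psi)/r^2$ evaluated at $s=0$; hence the orbit leaves $P_3$ along the branch of the unstable manifold $V_1$ pointing into the quadrant $\{\psi<0,\ \theta<\pi/2\}$, i.e. in the direction $(\alpha,-2)$.

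Second, I would confine the orbit. The line $\theta=\pi/2$ is invariant for \eqref{eq3} (there $h_2=\cos\tfrac{\pi}{2}\sin(\psi-\theta)=0$), and so is $\theta=-\pi/2$; by uniqueness the orbit cannot recross them and therefore stays in the strip $|\theta|<\pi/2$ for all later times. Geometrically this is exactly $x=r\cos\theta>0$, so the singular term $\sin\psi/x$ in \eqref{eq2} never activates and the maximal solution is global. Inside this strip the only equilibria are the $P_1$ (stable spirals) and $P_2$ (unstable spirals) lying on $\theta=0$; the task is to show the orbit is captured by $P_1=(0,0)$. I would build a bounded trapping region around $(0,0)$ by combining the invariant edge $\theta=\pi/2$ with the nullclines $\sin(\psi-\theta)=0$ and $h_1=0$, checking that the flow points inward on the remaining boundary, so that the $\omega$-limit set is nonempty and contained in the strip.

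Third, I would apply the Poincar\'e--Bendixson theorem: the $\omega$-limit set of this bounded orbit is either $P_1$ or a periodic orbit encircling it. Ruling out periodic orbits is the step I expect to be the main obstacle, since the local classification only makes $P_1$ a \emph{stable} spiral and does not by itself exclude an enclosing limit cycle. The cleanest route is a Dulac/Lyapunov argument: at $\alpha=-2$ the point $P_1$ is a center, so \eqref{eq3} should admit a first integral there, and I would try to show that for $\alpha\in(-2,0)$ this quantity becomes a strict Lyapunov function (equivalently, that a suitable Dulac multiplier renders $\mathrm{div}$ of the rescaled field sign-definite on the strip). Either device forbids closed orbits and forces $(\psi(t),\theta(t))\to(0,0)$.

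Finally, I would translate the spiral convergence back to geometry. From $\tfrac{dr}{ds}=x\cos\psi/r+z\sin\psi/r=\cos(\psi-\theta)$ and $(\psi,\theta)\to(0,0)$ one gets $r'(s)\to1$, whence $r(s)\to\infty$: the curve goes to infinity, with $\theta\to0$ meaning it does so along the $x$-axis, while the oscillation of $\psi$ about $0$ (the spiral winding) produces the announced oscillation. Since $\psi\to0$, there is $s^\ast$ with $|\psi(s)|<\pi/2$ for $s\ge s^\ast$, hence $x'=\cos\psi>0$ and $x$ is strictly increasing to $+\infty$ on $[s^\ast,\infty)$; thus outside the compact arc $\gamma([0,s^\ast])$ the curve is a graph over the $x$-axis, and the rotational surface is a graph over the plane $z=0$ outside a compact set containing $0$, as claimed.
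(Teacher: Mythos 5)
Your proposal follows essentially the same route as the paper's proof: the phase-plane system \eqref{eq3}, departure from the saddle $P_3=(0,\tfrac{\pi}{2})$ along the unstable direction $(\alpha,-2)$ into $\{\psi<0,\ \theta<\tfrac{\pi}{2}\}$ (the paper obtains this from $\psi'(0)=\alpha/2<0$, exactly as you do), confinement of the orbit, convergence to the stable spiral $P_1=(0,0)$, and then the same translation back to geometry: $(\psi,\theta)\to(0,0)$ yields the oscillation of $\gamma$ about the $x$-axis, and $x'=\cos\psi\neq 0$ outside a compact set yields the graph property. Several of your intermediate checks are finer than what the paper records, e.g.\ $\theta'(0)=-1$, the invariance of the lines $\theta=\pm\tfrac{\pi}{2}$ (hence $x>0$ along the flow and no reactivation of the singularity of \eqref{eq2}), and $r'=\cos(\psi-\theta)\to 1$ forcing $r\to\infty$, all of which the paper leaves implicit; the paper instead pins the departing branch between the lines $P_3P_1$ and $P_3P_2$ by comparing the slope $2/\alpha$ of $V_1$ with the slope of $P_3P_2$, using $\alpha>-4$.

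The step you flag as the main obstacle --- showing that the orbit is captured by $P_1$ rather than by a periodic orbit encircling it --- is indeed left unfinished in your write-up: you never exhibit the trapping region, the Dulac multiplier, or the Lyapunov function, and the boundedness of the orbit (needed before Poincar\'e--Bendixson applies at all, since $\psi$ could a priori drift through successive equilibria) is not established. You should know, however, that this is precisely the point the paper does not argue either: its proof concludes with the single assertion that, since $P_1$ is a stable spiral, ``the trajectory must end at $P_1$'', with the numerically computed portrait in Fig.~\ref{fig-fase12} standing in for a global argument. So your proposal reproduces the paper's argument at the same --- arguably slightly higher --- level of rigor, and completing either version would require carrying out exactly the Poincar\'e--Bendixson/Dulac step you describe but do not execute.
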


\begin{proof}
The function $\psi$ is decreasing at $s=0$ because $\psi'(0)=\frac{\alpha}{2}<0$.  Since $\psi$ is negative for $s$ close to $0$, the trajectory corresponding to the solution $\gamma$ starts in the direction the unstable manifold $V_1$ at $P_3=(0,\frac{\pi}{2})$. This direction is $(\alpha,-2)$ and the trajectory lies in the third quadrant in the phase plane around $P_3$. As in the proof of Thm. \ref{t2}, consider the equilibrium points on the left hand-side of the $(\psi,\theta)$-phase plane: $P_2=(-\pi,0)$ and $P_4=(-\pi,\frac{\pi}{2})$. By using that $\alpha>-4$, a similar argument  proves that the trajectory lies between the lines $P_3P_1$ and $P_3P_2$ close to the point $P_3$: see Fig. \ref{fig4}, left. Since $P_1$ which it is a stable spiral node, the trajectory must end at $P_1$. As $s\to\infty$, we have $(\psi(s),\theta(s))\to (0,0)$ and this implies that $\gamma$ oscillates around the $x$-axis. Finally the $\psi$-coordinate of the trajectory goes to $0$. Thus $x'(s)=\cos\psi(s)\not=0$ as $s\to\infty$. This proves that $\gamma$ is a graph on the $x$-axis outside a compact containing $s=0$.

\end{proof}
\begin{figure}[hbtp]
\begin{center}
\includegraphics[width=.45\textwidth]{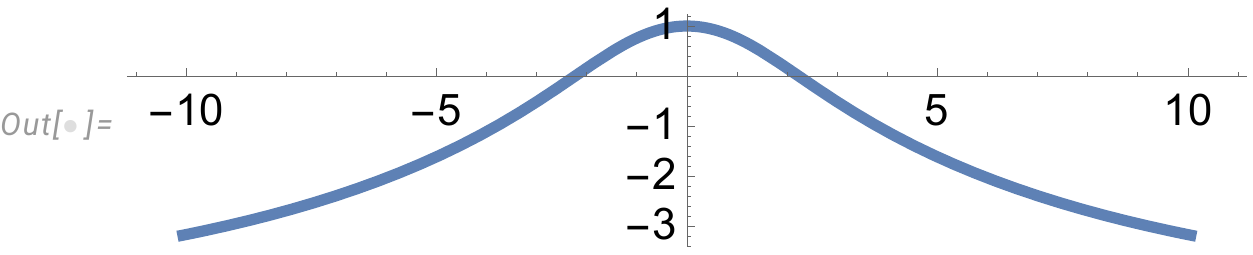}, \includegraphics[width=.5\textwidth]{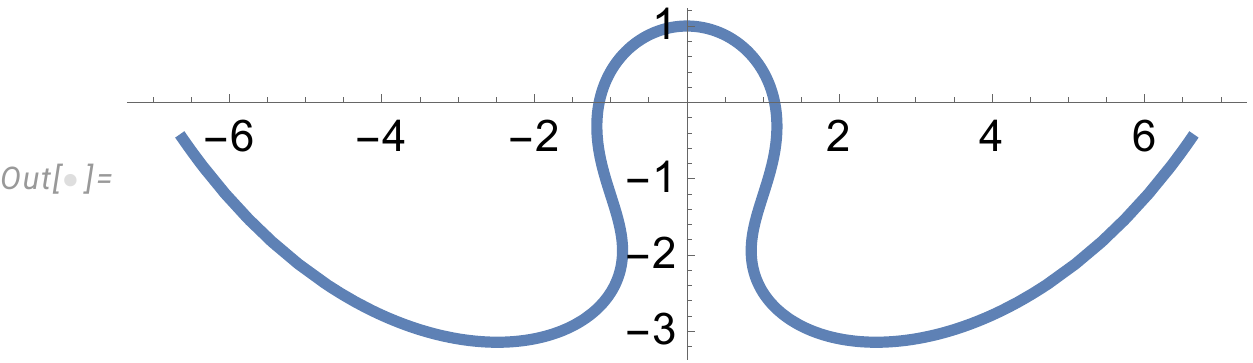}
\end{center}
\caption{Solutions of Eq. \eqref{rot-r} when $\alpha\in (-2,0)$. Here $\alpha=-1$ (left) and $\alpha=-1.8$ (right). }\label{fig4}
\end{figure}

 \begin{figure}[hbtp]
\begin{center}
\includegraphics[width=.4\textwidth]{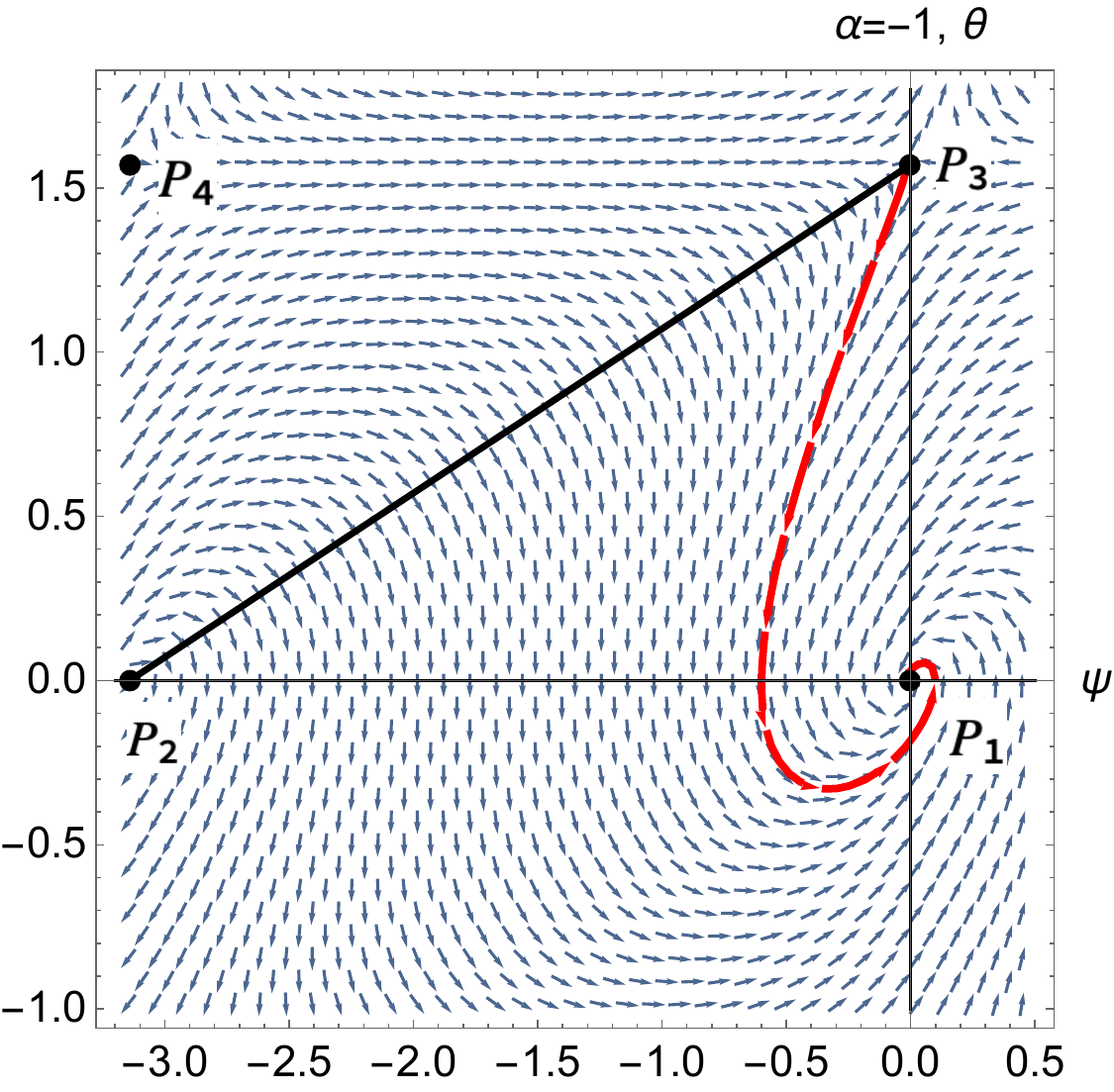}, \includegraphics[width=.4\textwidth]{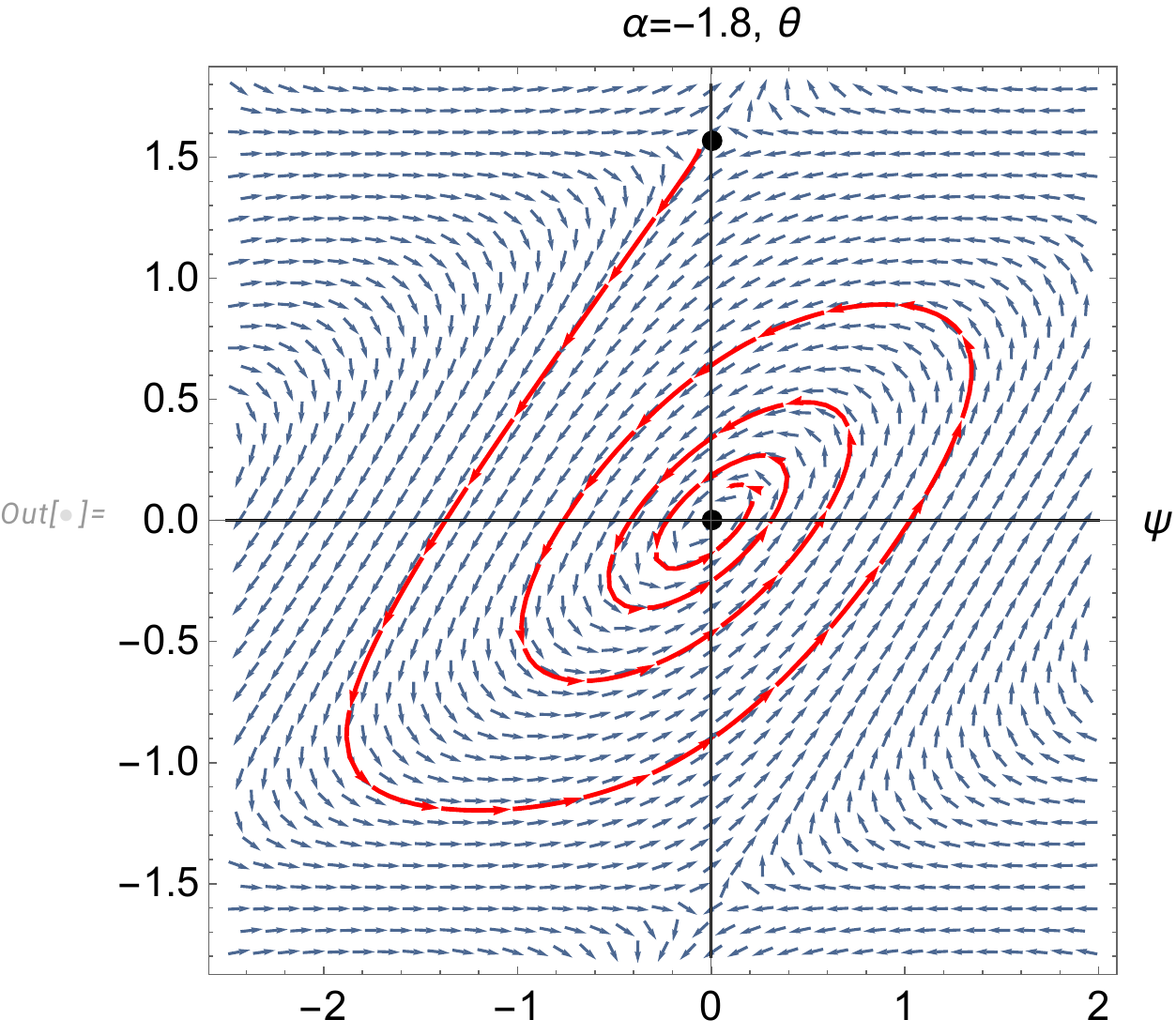}
\end{center}
\caption{The $(\psi,\theta)$-phase plane of \eqref{eq3} for $\alpha=-1$ (left) and $\alpha=-1.8$ (right).    }\label{fig-fase12}
\end{figure}

Finally, we consider the case $\alpha\leq -2$.  
 
\begin{theorem}\label{t5}
 Suppose $\alpha\leq -2$ and let  $\gamma(s)=(x(s),0,z(s))$, $s\in I$, denote a maximal solution   of \eqref{eq2}-\eqref{ini}.    If $\alpha<-2$, $\gamma(I)\cup\{0\}$ is an embedded closed curve non-smooth at $0$ except when $\alpha=-4$. Moreover, 

 \begin{enumerate}
 \item If $\alpha=-2$, then  $\gamma$ describes  a circle  centered at $0$. 
 \item If $\alpha\in (-4,-2)$, then $\gamma$ has points in both  half-planes $z>0$ and $z<0$. \item If $\alpha=-4$, then  $\gamma$  describes a circle of radius $\frac12$ centered at $(0,\frac12)$.
\item If $\alpha<-4$, then  $\gamma$ stays on the half-plane $z>0$ and $\gamma$ is a bi-graph on the $z$-axis. In particular, $\gamma(I)\cup\{0\}$ is an embedded closed curve.
\end{enumerate}
\end{theorem}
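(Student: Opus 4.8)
\emph{Overview and set-up.} The plan is to read off all four cases from the flow of the autonomous system \eqref{eq3}, tracking the single orbit that issues from the saddle $P_3=(0,\tfrac{\pi}{2})$. The initial data \eqref{ini} land exactly there, since $x(0)=0,\ z(0)=1$ force $\theta(0)=\tfrac{\pi}{2}$ and $\psi(0)=0$; because $\psi'(0)=\tfrac{\alpha}{2}<0$ the orbit leaves along the unstable manifold in the third--quadrant direction $(\alpha,-2)$. By \eqref{l1}--\eqref{l2} the characters of the neighbouring rest points $P_1=(0,0)$ and $P_2=(-\pi,0)$ switch precisely at $\alpha=-2$ and $\alpha=-4$, and this is what separates (1)--(4). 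Two elementary facts are used repeatedly. First, \eqref{eq2}--\eqref{ini} is invariant under $(x,z,\psi)(s)\mapsto(-x(-s),\,z(-s),\,-\psi(-s))$; by uniqueness $x$ is odd, $z$ even and $\psi$ odd, so $\gamma$ is symmetric about the $z$-axis and it suffices to analyse $s>0$. Second, writing $R=|p|$ one has $R'(s)=\cos(\psi-\theta)$ and $\tfrac{d\log R}{dt}=\cos\theta\,\cos(\psi-\theta)$; together with Corollary \ref{cor1} (for $\alpha<-2$ the closure meets $0$) these control the monotonicity of the distance to $0$ and the fact that $\gamma\to 0$.

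\emph{The two spheres.} For $\alpha=-2$ and $\alpha=-4$ I would bypass the phase plane. By Proposition \ref{pr1}(1) the unit sphere centred at $0$ is stationary; its meridian $x^2+z^2=1$ passes through $(0,1)$ with horizontal tangent and so solves \eqref{eq2}--\eqref{ini}. Uniqueness identifies $\gamma$ with this circle, giving (1). Likewise, by Proposition \ref{pr1}(2) the sphere through $0$ whose meridian is the circle of radius $\tfrac12$ centred at $(0,\tfrac12)$ is stationary, meets $(0,1)$ horizontally and passes through $0$, hence solves \eqref{eq2}--\eqref{ini}; uniqueness gives (3). This meridian meets the axis orthogonally at $0$, so the surface is smooth there, which is the exceptional clause.

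\emph{The generic cases.} For $\alpha<-2$, $\alpha\neq-4$, I would determine the $\omega$-limit of the orbit from $P_3$. If $\alpha\in(-4,-2)$ then $P_2$ is a stable spiral by \eqref{l2}: confining the orbit to the region bounded by the nullclines of \eqref{eq3} and the segments $P_3P_1,\,P_3P_2$, and ruling out closed orbits by Poincar\'e--Bendixson, I conclude that it spirals into $P_2$. The infinitely many crossings of $\theta=0$ make $z=x\tan\theta$ change sign, which is (2); since $\psi-\theta\to-\pi$ one has $R'=\cos(\psi-\theta)<0$ near $0$, so $\gamma$ spirals monotonically into $0$ and is embedded but without a tangent at $0$. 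If $\alpha<-4$ then $P_2$ is a stable node: I would trap the orbit in the open box $0<\theta<\tfrac{\pi}{2},\,-\pi<\psi<0$ and let it run to $P_2$. Here $\theta\in(0,\tfrac{\pi}{2})$ forces $z>0$, $\psi\in(-\pi,0)$ forces $z'=\sin\psi<0$ so each branch is a graph over the $z$-axis (a bi-graph after reflection), and $x'=\cos\psi$ vanishes once at $\psi=-\tfrac{\pi}{2}$, producing the single bulge; monotonicity $R'=\cos(\psi-\theta)\le0$ yields embeddedness. This proves (4). In either range the two mirror branches meet at $(0,1)$ and limit to $0$, so $\gamma(I)\cup\{0\}$ is an embedded closed Jordan curve, smooth off $0$.

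\emph{Non-smoothness and the main obstacle.} It remains to see that the closure is non-smooth at $0$ unless $\alpha=-4$. For $\alpha\in(-4,-2)$ this is immediate from the infinite spiralling. For $\alpha<-4$ the meridian reaches $0$ orthogonally like $z\sim c\,x^{q}$, where the admissible exponent solves $q^2+\alpha q-\alpha=0$; the slow node eigenvalue $\lambda_+=\tfrac{(2+\alpha)+\sqrt{\alpha^2+4\alpha}}{2}$ selects the root $q=1-\lambda_+=\tfrac{-\alpha-\sqrt{\alpha^2+4\alpha}}{2}\in(1,2)$, so $\gamma$ is $C^1$ but not $C^2$ at $0$, and---since $0$ is the singular point of the density $|p|^\alpha$, where Theorem \ref{t-remove} does not apply---the surface is genuinely non-smooth there; only $\alpha=-4$ gives the integer $q=2$ and the smooth sphere. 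The hard part is the global phase-plane analysis feeding all of this: constructing trapping regions from the nullclines of \eqref{eq3} for every $\alpha$ in each range, excluding limit cycles and escape so that Poincar\'e--Bendixson forces the orbit into $P_2$, and then separating the oscillatory (spiral) from the monotone (node) approach sharply enough to read off both embeddedness and the non-integer exponent $q$.
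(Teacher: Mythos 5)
Your overall strategy coincides with the paper's: both proofs track the single orbit of \eqref{eq3} leaving the saddle $P_3=(0,\tfrac{\pi}{2})$ along the unstable direction $(\alpha,-2)$, dispose of $\alpha=-2$ and $\alpha=-4$ by uniqueness against the circles of Prop.~\ref{pr1}, invoke Cor.~\ref{cor1} and the reflection symmetry in the $z$-axis, and read off cases (2) versus (4) from $P_2=(-\pi,0)$ being a stable spiral or a stable node according to \eqref{l2}. Your additions---the identity $R'=\cos(\psi-\theta)$ for embeddedness and the eigendirection exponent $q=\tfrac{-\alpha-\sqrt{\alpha^2+4\alpha}}{2}\in(1,2)$ quantifying non-smoothness at $0$---go beyond what the paper writes down (for the latter you would still have to rule out approach along the fast eigendirection, where the exponent is different).

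There is, however, a genuine gap at precisely the step you defer to ``the hard part'': your proposed trapping region for $\alpha<-4$, the box $\{0<\theta<\tfrac{\pi}{2},\,-\pi<\psi<0\}$, is not forward invariant. On its bottom edge $\theta=0$, $\psi\in(-\pi,0)$, system \eqref{eq3} gives $\tfrac{d\theta}{dt}=\cos 0\,\sin\psi=\sin\psi<0$, so the flow exits through that edge. This is not a removable technicality: the whole content of case (4) is that $\theta$ (hence $z$) stays positive, and for $\alpha\in(-4,-2)$ the orbit really does leave this box infinitely often---that is exactly statement (2)---even though the box is the same for both ranges of $\alpha$; so no argument based on this region alone can separate the two cases. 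The paper instead works with the triangle $P_2P_3P_4$, and the decisive computation, absent from your proposal, is the flux across the hypotenuse $P_2P_3$, i.e.\ the line $\theta=\tfrac{\psi+\pi}{2}$. Parametrizing it by $\psi=-\pi+2u$, $\theta=u$, $u\in(0,\tfrac{\pi}{2})$, one gets $h_1=(2+\alpha)\sin u\cos u$ and $h_2=-\sin u\cos u$, so the component of the flow in the inward direction $(-\tfrac12,1)$ equals $-\tfrac12 h_1+h_2=-\tfrac{4+\alpha}{2}\sin u\cos u$, which is positive (inward) precisely when $\alpha<-4$ and negative (outward) when $\alpha\in(-4,-2)$. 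Combined with the inward crossing of the edge $\psi=-\pi$ (where $\tfrac{d\psi}{dt}=-\alpha\sin\theta\cos\theta>0$) and the invariance of the line $\theta=\tfrac{\pi}{2}$, this makes the triangle forward invariant exactly for $\alpha<-4$, forces the orbit into the stable node $P_2$, and is what actually produces the dichotomy at $\alpha=-4$. Once this computation is inserted in place of your box, your argument for (4), and the consistency of the spiralling picture in (2), goes through along the paper's lines.
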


\begin{proof}
The cases $\alpha=-2$ and $\alpha=-4$ are straightforward  by uniqueness of solutions of \eqref{eq2}-\eqref{ini}. Suppose $\alpha\not= -2,-4$. We know that $0\in\overline{\Sigma}$ by Cor. \ref{cor1}. In consequence, by the symmetries of the solutions of \eqref{eq2}-\eqref{ini} with respect to the $z$-axis, these solutions are defined at some finite interval $[0,s_1)$, where $\lim_{s\to s_1}\gamma(s)=(0,0)$.

At $s=0$, we know $\psi'(0)=\alpha/2$ and $\psi=\psi(s)$ is a decreasing function at $s=0$. From \eqref{polar}, at the initial point $P_3$, we have $(\psi,\theta)=(0,\frac{\pi}{2})$ and $x(s)>0$ close to $s=0$. Therefore the corresponding trajectory    $\beta$   of $\gamma$ in the phase plane starts from $P_3$ going into the third quadrant with respect to the point $P_3$.  The trajectory $\beta$ corresponds to an unstable trajectory along  the direction $(\alpha,-2)$. Let denote the equilibrium points  
$$P_1=(0,0),\quad P_2=(-\pi,0),\quad P_3=(0,\frac{\pi}{2}),\quad P_4=(-\pi,\frac{\pi}{2}).$$

We use the same arguments of Thm. \ref{t2} and \ref{t3}. The angle $\varphi$ of the straight line $P_2P_3$  has tangent $\tan\varphi=\frac{\pi/2}{\pi}=1/2$. In consequence, the trajectory $\beta$ lies situated between the vertical  line $P_1P_2$ and the line $P_2P_3$ if $\frac{2}{-\alpha}>\frac12$, that is, if and only if $\alpha\in (-4,-2)$ close to $P_3$. Similarly, the trajectory $\beta$ lies situated between the horizontal line $P_3P_4$ and the line $ P_2P_3$ if   $\alpha<-4$: see Figs. \ref{fig-fase0} left and right, respectively. 

\begin{enumerate}
\item Case $\alpha\in (-4,-2)$. We know that the trajectory $\beta$ of the solution $\gamma$ starts between the lines $P_1P_3$ an $P_2P_3$. The equilibrium point $P_1$ is an unstable point hence $\beta$ cannot arrive to the point $P_1$. Thus $\beta$ goes to the point $P_2$. The trajectory $\beta$ crosses the line $P_1P_2$ with tangent vector $(-(1+\alpha)\sin\psi,\sin\psi)$ and the point $P_2$ is a stable spiral. In particular, $\theta$ is negative and this implies that the curve $\gamma$ crosses infinite times the $x$-axis as $\gamma$ goes to $(0,0)$. See Fig. \ref{fig3}, left. 

\item Case $\alpha<-4$. Close to the point $P_3$, the trajectory lies contained between the line $P_2P_3$ and $P_3P_4$. Since the flow of trajectories crosses the vertical line $P_2P_4$ inside the triangle $P_2P_3P_4$, then the trajectory $\beta$ corresponding to $\gamma$ must remain in that  triangle. In consequence, $\beta$ arrives to the equilibrium point $P_3$, which it is a stable node. Since $\beta$ moves in the triangle $P_2P_3P_4$, the $\theta$-coordinate of $\beta$ is positive which implies that the solution curve $\gamma$ is situated at the upper half-plane $z>0$. Since this also occurs for the $\psi$-coordinate, then $z'(s)=\sin\psi(s)\not=0$ and thus $\gamma$ is a graph on the $z$-axis. See Fig. \ref{fig3}, right. 

\end{enumerate}
  \begin{figure}[hbtp]
\begin{center}
\includegraphics[width=.45\textwidth]{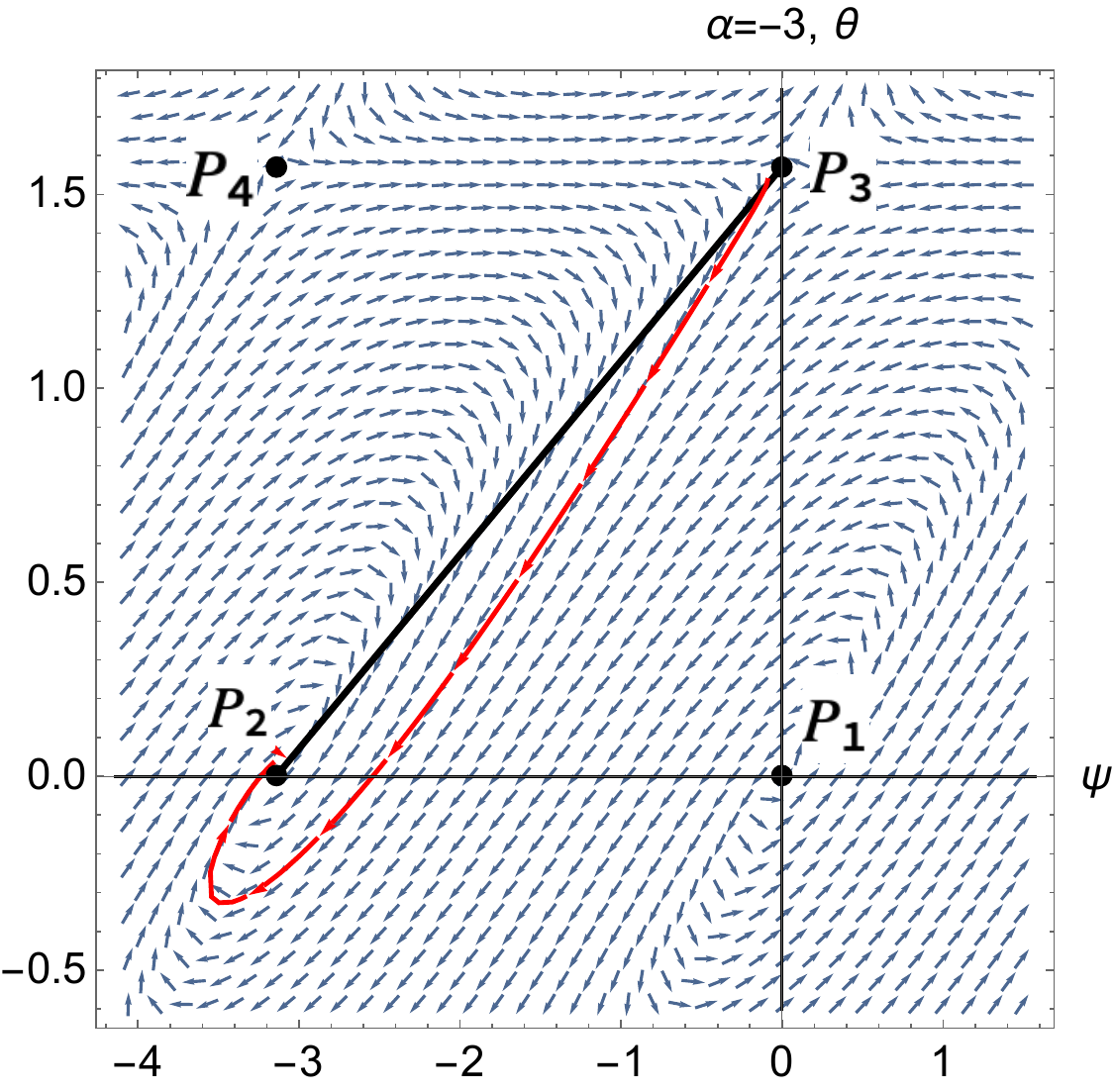}, \includegraphics[width=.45\textwidth]{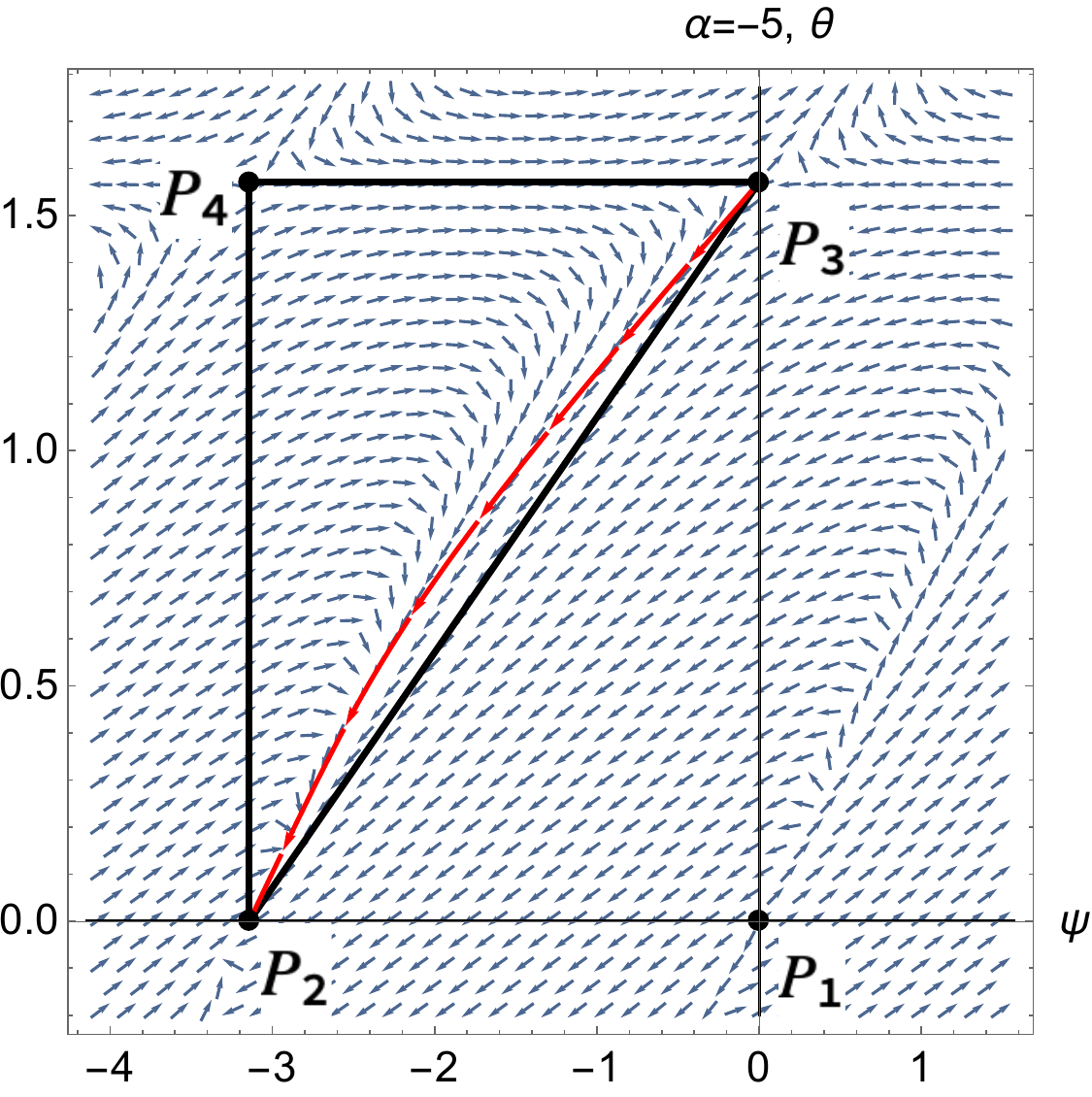}
\end{center}
\caption{The $(\psi,\theta)$-phase plane of \eqref{eq3} for $\alpha=-3$ (left) an $\alpha=-5$ (right).     }\label{fig-fase0}
\end{figure}
\begin{figure}[hbtp]
\begin{center}
\includegraphics[width=.45\textwidth]{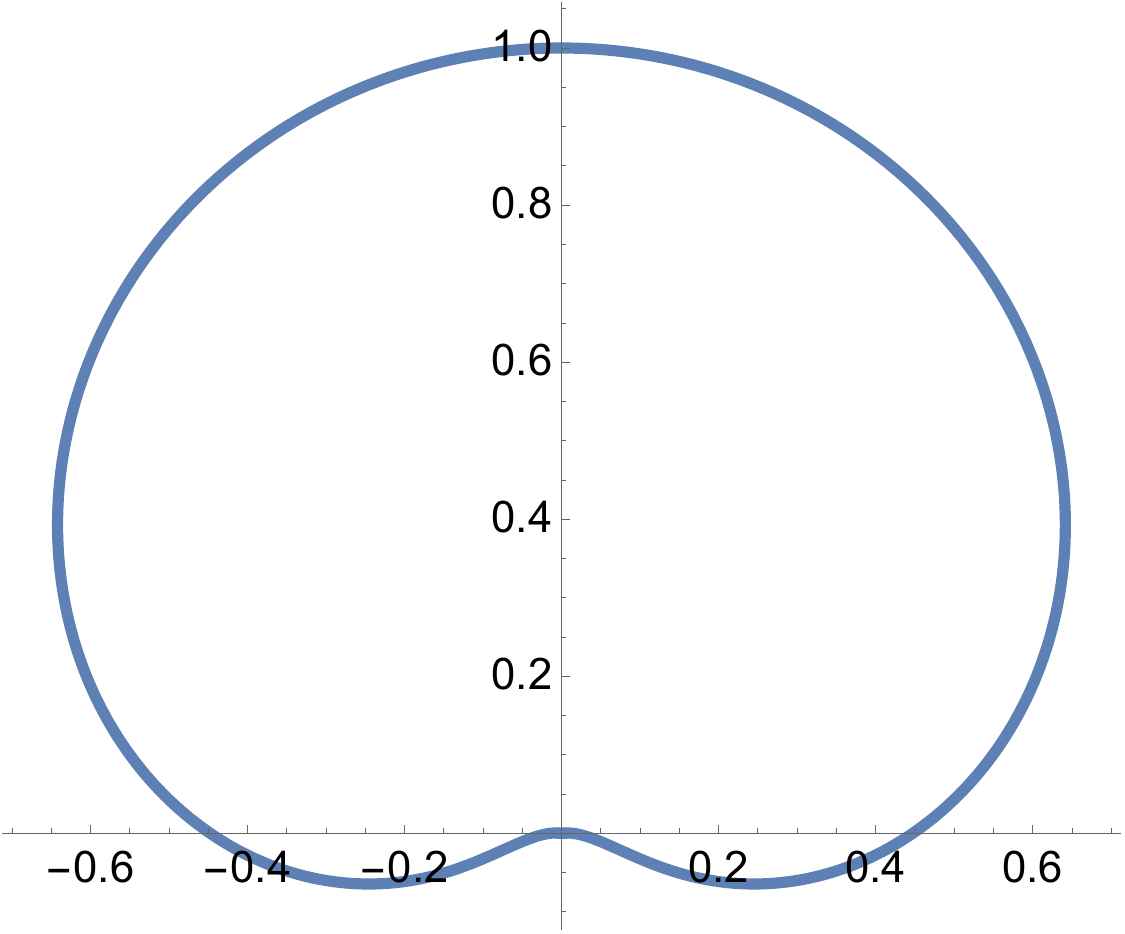}, \includegraphics[width=.35\textwidth]{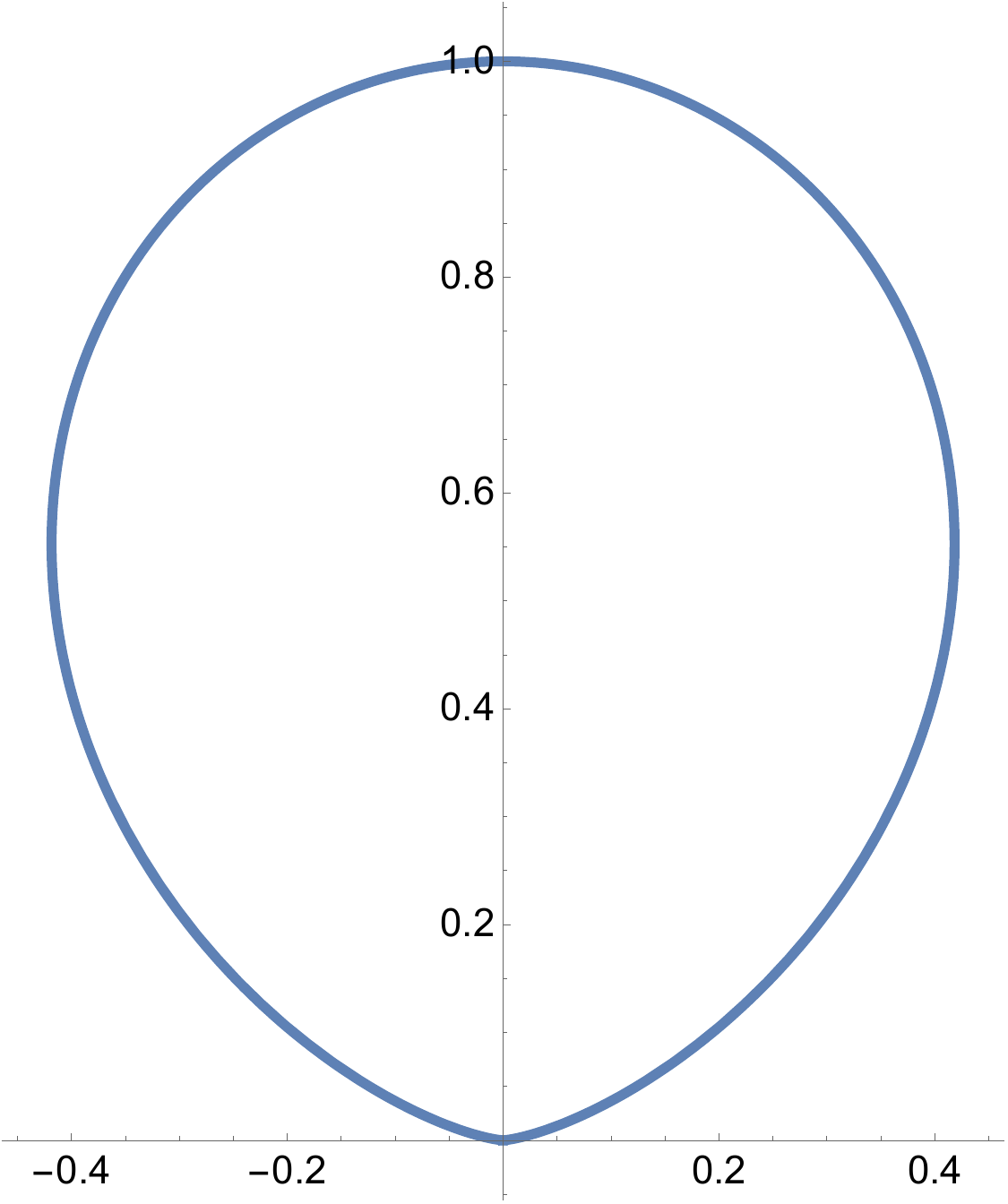}
\end{center}
\caption{Solutions of Eq. \eqref{rot-r} when  $\alpha=-3$ (left)  and $\alpha=-5$ (right).   }\label{fig3}
\end{figure}

\end{proof} 

\begin{remark} 
If $\alpha=-4$, Theorem \ref{t5} asserts that the solution is a circle of radius $\frac12$ centered at $(0,\frac12)$.  However, not any sphere crossing the origin, which it is a $-4$-stationary surface, is axisymmetric about the $z$-axis.
\end{remark}

The  axisymmetric stationary surfaces intersecting orthogonally the rotation axis  are determined by  the initial conditions \eqref{ini}.   In the final part of this section, consider the   interesting case that  the surface intersects orthogonally the plane $z=0$. For this, consider   initial conditions
\begin{equation}\label{ini2}
 \left\{
 \begin{split}
 (x(0),z(0))&=(1,0),\\
 \psi(0)&=\frac{\pi}{2}.
 \end{split}\right.
 \end{equation}
The condition $\psi(0)=\frac{\pi}{2}$ implies that $\gamma$ meets orthogonally the $x$-axis at the initial point $s=0$. The condition $x(0)=1$ is not restrictive because Eq. \eqref{eq2} is invariant by dilations. A first result is that  the corresponding stationary surface is invariant by reflections about the plane $z=0$.   
  
 \begin{proposition} Solutions of \eqref{eq2}-\eqref{ini2}   are symmetric about the $x$-axis.
 \end{proposition}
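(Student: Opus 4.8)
The plan is to exploit the invariance of the system \eqref{eq2} under the involution that reverses the parameter and reflects across the plane $z=0$. Geometrically, the reflection $\sigma(x,y,z)=(x,y,-z)$ is a vector isometry of $\r^3$ that fixes both the origin and the rotation axis (the $z$-axis); by Prop. \ref{pr0} it carries the $\alpha$-stationary surface to another axisymmetric $\alpha$-stationary surface about the same axis. Since the surface meets the plane $z=0$ orthogonally along the circle through $(1,0,0)$, the reflected surface obeys the same initial data, and uniqueness will force the two to coincide. I carry this out at the level of the generating curve $\gamma$.

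First I would define the candidate reflected solution by
$$\bar{x}(s)=x(-s),\qquad \bar{z}(s)=-z(-s),\qquad \bar{\psi}(s)=\pi-\psi(-s),$$
and verify that $(\bar x,\bar z,\bar\psi)$ again solves \eqref{eq2}. The first two equations are immediate from $\cos(\pi-\psi)=-\cos\psi$ and $\sin(\pi-\psi)=\sin\psi$: indeed $\bar x'(s)=-x'(-s)=-\cos\psi(-s)=\cos\bar\psi(s)$ and $\bar z'(s)=z'(-s)=\sin\psi(-s)=\sin\bar\psi(s)$. For the third equation, write its right-hand side as $F(x,z,\psi)=\alpha\frac{z\cos\psi-x\sin\psi}{x^2+z^2}-\frac{\sin\psi}{x}$; the key computation is the identity $F(x,-z,\pi-\psi)=F(x,z,\psi)$, which follows by substituting $\cos(\pi-\psi)=-\cos\psi$ and $\sin(\pi-\psi)=\sin\psi$ and observing that the two sign changes in the numerator of the first fraction cancel. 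Since $\bar\psi'(s)=\psi'(-s)=F(x(-s),z(-s),\psi(-s))=F(\bar x(s),\bar z(s),\bar\psi(s))$ by this identity, the third equation holds as well.

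Next I evaluate the initial data: $\bar x(0)=x(0)=1$, $\bar z(0)=-z(0)=0$, and $\bar\psi(0)=\pi-\psi(0)=\pi-\frac{\pi}{2}=\frac{\pi}{2}$, so $(\bar x,\bar z,\bar\psi)$ satisfies precisely \eqref{ini2}. Because the right-hand side of \eqref{eq2} is smooth (in particular Lipschitz) near the initial point — here $x(0)=1\neq0$ and $x(0)^2+z(0)^2=1\neq0$, so neither denominator degenerates — the standard uniqueness theorem for ordinary differential equations applies. Hence $\bar x=x$, $\bar z=z$ and $\bar\psi=\psi$ on the common interval of existence, that is $x(-s)=x(s)$ and $z(-s)=-z(s)$. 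Thus $x$ is even and $z$ is odd, which is exactly the statement that $\gamma$, and with it the surface, is symmetric with respect to the $x$-axis.

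The only genuinely delicate point is guessing the correct form of the reflected solution, in particular the replacement $\psi\mapsto\pi-\psi$: this is dictated by the fact that reflecting a tangent angle across the $x$-axis while simultaneously reversing the orientation of the curve sends $\psi$ to $\pi-\psi$. Once this ansatz is in place, the proof reduces to the trigonometric identity $F(x,-z,\pi-\psi)=F(x,z,\psi)$ together with a routine appeal to uniqueness, and I expect no further obstacle.
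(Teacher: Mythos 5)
Your proof is correct and is essentially the paper's own argument: the paper likewise observes that $\{\overline{x}(s)=x(-s),\ \overline{z}(s)=-z(-s),\ \overline{\psi}(s)=\pi-\psi(-s)\}$ solves \eqref{eq2}--\eqref{ini2} and concludes by uniqueness of solutions of ODEs. You simply carry out explicitly the verification (the identity $F(x,-z,\pi-\psi)=F(x,z,\psi)$ and the non-degeneracy of the denominators at the initial point) that the paper leaves to the reader.
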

 \begin{proof}
 The result follows because if  $\{x,z,\psi\}$ is a solution of \eqref{eq2}-\eqref{ini2}, then $\{\overline{x}(s)=x(-s),\overline{z}(s)=-z(-s),\overline{\psi}(s)=\pi-\psi(-s)\}$   is also a solution of \eqref{eq2}-\eqref{ini2}. Uniqueness of ODEs proves symmetry with respect to $x$-axis.
 \end{proof}

In Figs. \ref{fig61} and \ref{fig62}, we have numerically calculated  the solutions of \eqref{eq2}-\eqref{ini2} for different values of $\alpha$. We analyze the case $\alpha>0$. The trajectory in the phase plane (red curve in Fig. \ref{fig5})  corresponding to the solution $\gamma$ crosses the point $(\frac{\pi}{2},0)$ and it goes  from   $P_2=(\pi,0)$ (unstable node) to $P_1$ (stable node). Since  the function $\theta$ does not attain the values $\pm \frac{\pi}{2}$,  the solution does not intersect the $z$-axis: see Fig. \ref{fig61}, left. In Figs. \ref{fig61} and \ref{fig62}, we show solutions of \eqref{eq2}-\eqref{ini2} for other  values of $\alpha$. In the case $\alpha<-2$, the extended solution must go to $0$ by Cor. \ref{cor1}. Furthermore, since $\gamma$ is symmetric about the $x$-axis, this implies that the curve $\gamma$ is a closed curve with $0\in\gamma(I)$. The smoothness of $\gamma$ at $0$ is not assured. This situation is similar that appeared in the solutions given in Thm. \ref{t5}. For example, if $\alpha<-4$, the curve $\gamma$ has not self-intersections and the corresponding stationary surface is an embedded closed surface with a singularity at $0$.

\begin{figure}[hbtp]
\begin{center}
\includegraphics[width=.4\textwidth]{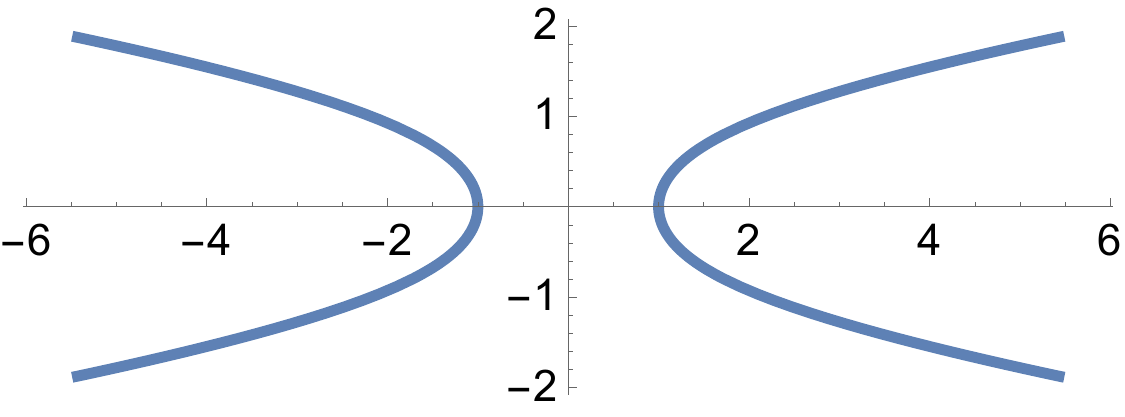}, \includegraphics[width=.5\textwidth]{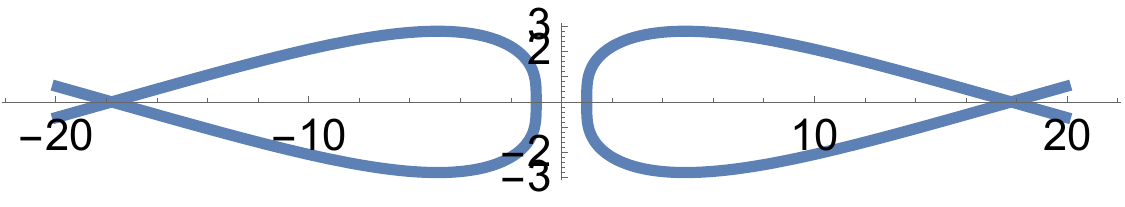}
\end{center}
\caption{Solutions of Eq. \eqref{eq2}-\eqref{ini2}. Cases  $\alpha=1$ (left) and $\alpha=-1$ (right).     }\label{fig61}
\end{figure}

\begin{figure}[hbtp]
\begin{center}
\includegraphics[width=.4\textwidth]{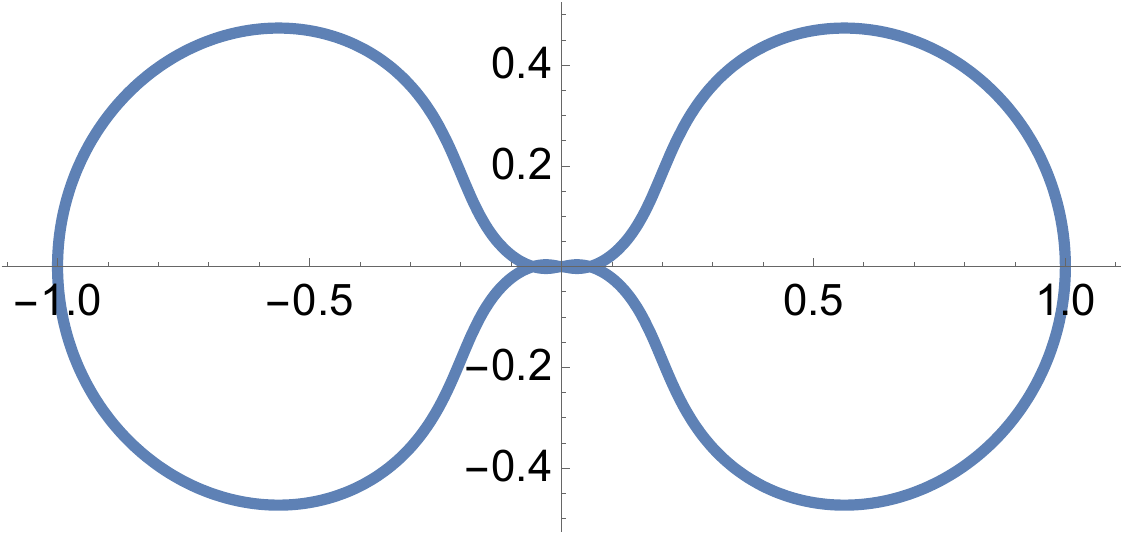}, \includegraphics[width=.5\textwidth]{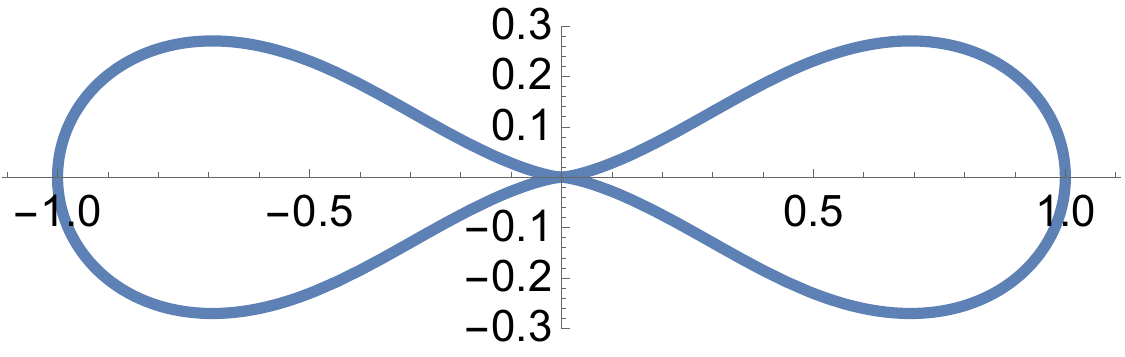}
\end{center}
\caption{Solutions of Eq. \eqref{eq2}-\eqref{ini2}. Cases  $\alpha=-3$ (left) and $\alpha=-5$ (right).     }\label{fig62}
\end{figure}

\section{  Helicoidal  stationary surfaces}\label{s6}

In this section we study   helicoidal stationary surfaces. A helicoidal motion of $\r^3$ is  a rotation about some line  followed by a translation parallel to that line. A  helicoidal surface $\Sigma$ in Euclidean space $\r^3$  is a surface    invariant under a one-parameter  group of helicoidal motions. The helicoidal group is determined by the twist axis and its pitch $h\in\r$. Without loss of generality, suppose that the twist axis is the $z$-axis. Then this group is $\{\mathcal{H}_t\colon t\in\r\}$ where $\mathcal{H}_t\colon\r^3\to\r^3$ is given by 
 \begin{equation}\label{h11}
 \mathcal{H}_t(x,y,z)= (x\cos t - y\sin t, x \sin t + y \cos t, z+ht).
  \end{equation}
 If $h=0$, then $\Sigma$ is simply a surface of revolution about the $z$-axis. In the following result we see that the  only helicoidal stationary   surfaces are rotational surfaces.  As in the case of rotational stationary surfaces, there is not an a priori relation between the twist axis of a helicoidal stationary surface and the origin $0$.

\begin{theorem}\label{t61}
 If $\Sigma$ is a   helicoidal stationary surface, then $h=0$, that is, $\Sigma$ is a surface of revolution. 
  \end{theorem}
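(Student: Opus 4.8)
The plan is to parametrize a helicoidal surface, write the stationary surface equation \eqref{eq1} in terms of the generating curve and the pitch $h$, and then show algebraically that the resulting identity forces $h=0$. Without loss of generality the twist axis is the $z$-axis, so I take a profile curve $\gamma(s)=(x(s),0,z(s))$ in the $xz$-plane and apply the helicoidal motions \eqref{h11} to obtain
\begin{equation*}
\Phi(s,t)=\bigl(x(s)\cos t,\,x(s)\sin t,\,z(s)+ht\bigr),\quad s\in I,\ t\in\r.
\end{equation*}
First I would compute the first and second fundamental forms of $\Phi$, the unit normal $\nu$, the mean curvature $H$, and the quantities $|p|^2=x^2+(z+ht)^2$ and $\langle\nu,p\rangle$ appearing in \eqref{eq1}. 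The key structural feature to exploit is that a helicoidal surface has constant mean curvature along each helix (i.e. $H$ is a function of $s$ alone when $\gamma$ is suitably normalized), whereas the right-hand side $\alpha\langle\nu,p\rangle/|p|^2$ genuinely depends on $t$ through the term $z(s)+ht$ as soon as $h\neq0$.

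The heart of the argument is therefore a separation-of-variables / dependence-on-$t$ analysis, exactly parallel to the cylinder computation in Prop. \ref{pr3} and the sphere computation in Prop. \ref{pr1}. After clearing denominators, I expect \eqref{eq1} to become a polynomial identity in the variable $t$ (with coefficients depending on $s$) that must vanish identically. The left-hand side $H(p)$ for a helicoidal surface depends on $t$ only mildly, while $|p|^2=x^2+(z+ht)^2$ and the inner product $\langle\nu,p\rangle$ introduce explicit powers of $t$ scaled by $h$. Matching coefficients of the various powers of $t$ should produce a system of equations whose only consistent solution requires $h=0$. Concretely, I would isolate the highest-order term in $t$: since $|p|^2$ grows quadratically in $ht$ while $H$ stays bounded in $t$, the equation $H|p|^2=\alpha\langle\nu,p\rangle$ forces the leading $t$-coefficient on the left to match that on the right, and a careful comparison should yield $h=0$ unless $H\equiv0$, which is the degenerate minimal case to be handled separately.

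The main obstacle I anticipate is the bookkeeping of the $t$-dependence: unlike the cylinder and sphere, a generic helicoidal surface does not have $H$ independent of $t$ in the naive parametrization, so one must either reparametrize along the helicoidal flow lines (using the natural invariant coordinate) to make $H=H(s)$, or else track the $t$-dependence of $H$ explicitly and show it cannot cancel the polynomial growth coming from $|p|^2$. A clean way to organize this is to use the helicoidal invariance directly: the group $\{\mathcal{H}_t\}$ preserves the equation only if the integrand is invariant, and since $|p|$ is \emph{not} invariant under $\mathcal{H}_t$ when $h\neq0$ (translation along $z$ changes $|p|$), the stationary condition cannot hold simultaneously at points along the same orbit unless $h=0$. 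I would make this rigorous by evaluating \eqref{eq1} at two points of a single helix, say at parameters $t=0$ and a generic $t$, and showing the two resulting equations are incompatible for $h\neq0$. The routine but lengthy curvature computation is expected to be the tedious part; the conceptual crux is that the origin-dependence of the energy breaks the helicoidal symmetry, forcing the pitch to vanish.
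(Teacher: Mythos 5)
Your very first normalization --- ``without loss of generality the twist axis is the $z$-axis'' --- is a genuine gap, and it is precisely the subtlety the paper is careful about. Equation \eqref{eq1} is not translation invariant: by Prop.~\ref{pr0} you may only normalize by vector isometries and dilations from the origin, so the most you can assume is that the twist axis is a line $L=\{x=q_1,\, y=0\}$ parallel to the $z$-axis, with an offset $q_1\geq 0$ that cannot be removed. This is not a formality. For $h=0$ the axis genuinely need not pass through the origin (the spheres through $0$ with $\alpha=-4$ of Prop.~\ref{pr1} are rotational about axes missing $0$, which is exactly the exceptional case in Prop.~\ref{pr2}), and the paper stresses before Thm.~\ref{t61} that no a priori relation between the twist axis and $0$ is available. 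Your proposal therefore proves only the special case $q_1=0$. In the paper's proof the offset produces extra terms: with $\Phi(s,t)=(q_1+x\cos t,\, x\sin t,\, z+ht)$ the cleared identity has the form $A_0+A_1t+A_2t^2+A_3\sin t+A_4\cos t=0$, and it is precisely the coefficient $A_3=\alpha h q_1\cos\psi\, W$ of $\sin t$ --- absent in your setting --- that disposes of $q_1\neq 0$: it forces $\cos\psi\equiv 0$, hence $x\equiv x_0$ constant, and then $A_2=\pm x_0^2h^2\neq 0$, a contradiction. To repair your argument you must redo the coefficient analysis including the trigonometric part of the identity generated by $q_1$, not just the polynomial part.

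Apart from this, your strategy coincides with the paper's (clear denominators, view \eqref{eq1} as an identity in $t$, match coefficients), and your sketched mechanism is sound in the case $q_1=0$. Your worry that $H$ might depend on $t$ in the naive parametrization is unfounded: each helix $t\mapsto\Phi(s,t)$ is an orbit of the ambient isometries $\mathcal{H}_t$ preserving $\Sigma$, so $H$ is automatically a function of $s$ alone. Then $|p|^2=x^2+(z+ht)^2$ is quadratic in $t$ with leading coefficient $h^2$, while a short computation with the normal \eqref{normal} gives $\sqrt{W}\,\langle\nu,p\rangle=x(z\cos\psi-x\sin\psi)+xht\cos\psi$, which is only linear in $t$; matching $t^2$-coefficients in $H|p|^2=\alpha\langle\nu,p\rangle$ yields $Hh^2\equiv 0$, so $h\neq 0$ forces $H\equiv 0$. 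The ``degenerate minimal case'' you defer is then a one-line check rather than a separate difficulty: $H\equiv 0$ and $\alpha\neq 0$ give $\langle\nu,p\rangle\equiv 0$, i.e.\ $xh\cos\psi\equiv 0$ and $x(z\cos\psi-x\sin\psi)\equiv 0$, whence $\cos\psi\equiv 0$ and then $x\sin\psi\equiv 0$ with $\sin\psi=\pm 1$, impossible. So the computational core of your plan does close, but only after the off-axis case is added; as written, the central computation is also left as an expectation rather than carried out.
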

  
  \begin{proof}
 The proof is by contradiction. Let $\Sigma$ be a  helicoidal stationary surface such that $h\not=0$.  Thanks to Prop. \ref{pr0},  and after a vector isometry of $\r^3$, we can assume that the twist axis $L$ is parallel to the $z$-axis and contained in the $xz$-plane. If the equations of $L$ are $\{x=q_1, y=0\}$, $q_1\geq 0$, the  helicoidal motions are as in  \eqref{h11} but moving the twist axis to $L$. Then  
 $$\mathcal{H}_t(x,y,z)=(q_1+(x-q_1)\cos t - y\sin t, (x-q_1)x \sin t + y \cos t, z+ht).$$
The generating curve  $\gamma\colon I\subset\r\to\r^3$ is a planar curve  contained in the $xz$-plane. Let $  \gamma(s)=(q_1+ x(s),0,z(s))$, where $x,z$ are smooth functions and $x(s)>q_1$. Then a  parametrization of $\Sigma$ is  
\begin{equation}\label{para}
\Phi(s,t)=(q_1+x(s)\cos t,x(s)\sin t,z(s)+ht).
\end{equation}
Without loss of generality, we can assume that $\gamma$ is parametrized by arc-length. Then $x'(s)=\cos\psi(s)$ and 
$z'(s) =\sin\psi(s)$ for some smooth function $\psi$. Regularity of $\Sigma$ is given by the condition $W:=x^2+h^2 \cos^2\psi>0$. The unit normal vector $\nu$ is 
\begin{equation}\label{normal}
\nu=\frac{1}{\sqrt{W}}\left(h\cos\psi\sin t-x\sin\psi\cos t,-h\cos\psi\cos t-x\sin\psi\sin t,x\cos\psi\right),
\end{equation}
and the mean curvature $H$  is
\begin{equation}\label{mean}
 H =  \frac{x (x^2+h^2) \psi '+\sin\psi  \left(x^2+2 h^2 \cos ^2\psi \right)}{ W^{3/2}}.
\end{equation}
After some computations, equation \eqref{eq1} becomes
\begin{equation}\label{h1}
A_0(s)+A_1(s)t+A_2(s)t^2+A_3(s)\sin t+A_4(s)\cos t=0,
\end{equation}
where $A_n=A_n(s)$ are smooth functions on the variable $s\in I$. Since the functions $\{1,t,t^2,\sin t,\cos t\}$ are linearly independent, then  all coefficients $A_n$ must vanish. A computation gives
$$A_3=\alpha h q_1\cos\psi W.$$
Equation $A_3=0$ gives two cases to discuss.
\begin{enumerate}
\item Case $q_1=0$. Then the twist axis is the $z$-axis. With this value of $q_1$, we have 
 $$A_2=h^2 \left(x  (x^2+h^2 ) \psi '+\sin \psi \left(x^2+2 h^2 \cos ^2\psi\right)\right).$$
Since $h\not=0$, we can get $\psi'$ from $A_2=0$, obtaining
$$\psi'=-\frac{\sin\psi(x^2+2h^2\cos^2\psi)}{x(h^2+x^2)}.$$
Substituting in $A_1$, we have 
$$A_1=\alpha h x\cos\psi W.$$
Equation $A_1=0$ yields  $\cos\psi=0$ identically. Since $x'(s)=\cos\psi(s)$, we deduce that  the function $x=x(s)$ is constant. If $x(s)=x_0$, then $A_2=x_0^2h^2$ and $A_2=0$ gives a contradiction because $x_0,h\not=0$.
\item Case $q_1\not=0$. Then $A_3=0$ implies  $\cos\psi=0$ identically and thus $x=x(s)$ is a constant function,  $x(s)=x_0$. Now we have  $A_2=x_0^2h^2$ and $A_2=0$ gives a contradiction again.
\end{enumerate}
  
 \end{proof}
 
Equation \eqref{eq1} for stationary   surfaces is  similar to the equation of the shrinkers of the mean curvature flow (MCF), which are obtained with a similar density. To be precise,  a  surface $\Sigma$ in $\r^3$ is said to be a {\it shrinker} of the MCF if $\Sigma$ is a weighted minimal surface for the density $\phi(p)=e^{\alpha|p|^2/2}$, where $\alpha\in\r$. Equation $H_\phi=0$ writes as $H=\alpha\langle  \nu,p\rangle$.    If $\alpha<0$, the surface is a called a self-shrinker while if $\alpha>0$, the surface is called a self-expander (\cite{eh}).  

 Shrinkers of helicoidal type form a rich family of surfaces (\cite{ha,km}). This  contrasts with the situation of stationary surfaces for $E_\alpha$ where the only helicoidal stationary surfaces are rotational surfaces (Thm. \ref{t61}).  As in the situation of stationary surfaces,   there is not an a priori relation between the twist axis of a   helicoidal shrinker and the origin of $\r^3$. However, as far as the authors know, there is not a discussion on it in the literature, and usually is assumed that the   twist axis of a helicoidal shrinker contains the origin. The following results clarifies this situation.    
 
 \begin{theorem} If $\Sigma$ is helicoidal  shrinker, then the twist axis crosses the origin of coordinates.
 \end{theorem}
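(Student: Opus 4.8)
The plan is to run the same scheme as in the proof of Thm.~\ref{t61}. A helicoidal shrinker with twist axis $L=\{x=q_1,\,y=0\}$ is parametrized exactly as in \eqref{para}, and both the unit normal $\nu$ in \eqref{normal} and the mean curvature $H$ in \eqref{mean} are already available; recall $W=x^2+h^2\cos^2\psi$ and $x>0$. The only modification is the field equation: a shrinker obeys $H=\alpha\langle\nu,p\rangle$ (the density is $e^{\alpha|p|^2/2}$), so there is \emph{no} division by $|p|^2$. I would separate variables in $t$ and show $q_1=0$.

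First I would compute $\langle\nu,p\rangle$ for $p=\Phi(s,t)$. After expanding, the mixed terms $\pm hx\cos\psi\sin t\cos t$ cancel and the identity $\cos^2 t+\sin^2 t=1$ collapses the remaining quadratic pieces, giving $\langle\nu,p\rangle=N_\nu/\sqrt{W}$ with
\begin{equation*}
N_\nu=-x^2\sin\psi+xz\cos\psi+q_1 h\cos\psi\,\sin t-q_1 x\sin\psi\,\cos t+x h\cos\psi\,t.
\end{equation*}
Writing $N_H=x(x^2+h^2)\psi'+\sin\psi\,(x^2+2h^2\cos^2\psi)$ for the numerator of \eqref{mean}, the equation $H=\alpha\langle\nu,p\rangle$ multiplied by $W^{3/2}$ reads $N_H=\alpha W N_\nu$, i.e.
\begin{equation*}
N_H=\alpha W\bigl(-x^2\sin\psi+xz\cos\psi\bigr)+\alpha W q_1 h\cos\psi\,\sin t-\alpha W q_1 x\sin\psi\,\cos t+\alpha W x h\cos\psi\,t.
\end{equation*}

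Since $N_H$ and the bracketed coefficients depend only on $s$, and $\{1,t,\sin t,\cos t\}$ are linearly independent, each coefficient vanishes identically; note that, in contrast with \eqref{h1}, no $t^2$ term occurs here precisely because the shrinker equation lacks the factor $|p|^2$. The coefficient of $t$ is $\alpha W x h\cos\psi$ and the coefficient of $\cos t$ is $-\alpha W q_1 x\sin\psi$. As $\alpha\neq0$, $W>0$ and $x>0$, I distinguish two cases. If $h\neq0$, the $t$-coefficient forces $\cos\psi\equiv0$, so $x'=\cos\psi=0$, $x\equiv x_0$ and $\sin\psi=\pm1$; the $\cos t$-coefficient then becomes $\mp\alpha W q_1 x_0=0$, whence $q_1=0$. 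If $h=0$, the surface is rotational and the $\cos t$-coefficient reduces to $\alpha W q_1 x\sin\psi=0$, so $q_1=0$ on the open set $\{\sin\psi\neq0\}$, hence $q_1=0$ unless $\sin\psi\equiv0$; in the latter, degenerate case $\Sigma$ is a plane, which for a shrinker must be $z=0$ since $H=0$ forces $\alpha z=0$, a surface through the origin. In every branch $0\in L$.

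The main obstacle is conceptual rather than computational. In Thm.~\ref{t61} the corresponding analysis produced a contradiction once $h\neq0$, yielding $h=0$; here no contradiction arises, because the branch $\cos\psi\equiv0$ is an honest shrinker, namely the vertical profile generating the shrinking cylinder of radius $x_0$ about the $z$-axis (consistent with the constant part $A_0=0$, which forces $\alpha=-1/x_0^2$). The point to check carefully is therefore that this degenerate branch, far from obstructing the argument, still pins the axis down to $q_1=0$, so that the conclusion $0\in L$ holds uniformly.
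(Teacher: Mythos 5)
Your proof is correct and follows essentially the same route as the paper: expand $H=\alpha\langle\nu,p\rangle$ in the linearly independent functions $\{1,t,\sin t,\cos t\}$ (your coefficients of $t$, $\sin t$ and $\cos t$ are exactly the paper's $A_1$, $A_3$, $A_4$, with no $t^2$ term since there is no $|p|^2$ factor), then treat the cases $h\neq 0$ and $h=0$ to force $q_1=0$. The only wording to tighten is the degenerate branch $\sin\psi\equiv 0$: there $\Sigma$ is the plane $z=0$ and the given axis $L=\{x=q_1,\,y=0\}$ need not contain the origin, so instead of claiming ``$0\in L$'' one should observe, as the paper does, that this plane is also rotational about the $z$-axis, hence admits a twist axis through the origin.
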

 
 \begin{proof} The proof is similar to Thm. \ref{t61} and we will follow the same notation. The solutions of $H=\alpha\langle \nu,p\rangle$ are also invariant by vector isometries of $\r^3$. Thus we can assume that the       twist axis is $L=\{x=q_1, y=0\}$. The proof consists in proving that $q_1=0$. Now the equation $H=\alpha\langle \nu,z\rangle$ becomes as \eqref{h1} with $A_2$ trivially $0$. We distinguish if $h=0$ or $h\not=0$. 
 
\begin{enumerate}
\item Case $h\not=0$. We have 
 $$A_1=\alpha h x\cos\psi W.$$
 Then $A_1=0$ implies $\cos\psi=0$ identically and thus $x(s)=x_0$ is a constant function, $x_0>0$. Now $A_3$ is trivially $0$ and $A_4=\alpha q_1 x_0^3$ which implies $q_1=0$. 
\item Case $h=0$. Now the surface is rotational and   the equation $H=\alpha\langle\nu,p\rangle$ is $A_0(s)+A_4(s)\cos t=0$, where 
$A_4=\alpha q_1 x^3\sin\psi$. Equation $A_4=0$ gives two cases to discuss, namely,   $q_1=0$ and $\sin\psi=0$ identically. If $q_1=0$, the result is proved. Suppose $q_1\not=0$. Then   $\psi=0$ identically. Thus $z$ is a constant function $z(s)=z_0$. Now $A_0=\alpha x^3z_0$. This implies $z_0=0$ and the surface is the horizontal plane of equation $z=0$. This plane is also a surface of revolution about the $z$-axis, proving the result in this situation. 
\end{enumerate}
 
 \end{proof}

\section*{Acknowledgements}
Rafael L\'opez has been partially supported by MINECO/MICINN/FEDER grant no. PID2023-150727NB-I00,  and by the ``Mar\'{\i}a de Maeztu'' Excellence Unit IMAG, reference CEX2020-001105- M, funded by MCINN/AEI/10.13039/ 501100011033/ CEX2020-001105-M.

\end{document}